\def\ve{\varepsilon}
\def\intl{\int\limits}
\def\mod{\,\text{\rm mod}\;}
\newcommand{\beq}{\begin{equation}}
\newcommand{\eeq}{\end{equation}}
\newtheorem{Thm}{Theorem}
\newtheorem{Lem}{Lemma}
\renewcommand{\theThm}{\Alph{Thm}}
\newtheorem*{MLem}{Main Lemma 1}
\newtheorem*{MLemm}{Main Lemma 1'}
\newtheorem*{MCor}{Corollary to the Main Lemma~1}
\newtheorem*{MCoro}{Corollary to the Main Lemma~1'}
\newtheorem*{Pro}{Proposition}
\newtheorem*{DConj}{Descartes conjecture}
\theoremstyle{definition}
\newtheorem*{Def}{Definition}
\newtheorem*{Rem}{Remark}
\begin{document}

\title{\normalsize\bf A new explicit formula in the additive theory of primes with
applications I.  The explicit formula for the Goldbach and
Generalized Twin Prime Problems}

\author{by\\
{J\'anos Pintz\thanks{Supported by ERC-AdG.~321104 and National Research Development and Innovation Office, NKFIH, K~119528.}
}}

\date{}

\numberwithin{equation}{section}
\numberwithin{Lem}{section}

\maketitle

\section{Introduction}
\label{s:1}
The well-known explicit formula of Riemann--Von Mangoldt for the
number of primes up to $x$ ($\varrho = \beta + i \gamma$ denotes
non-trivial zeros of Riemann's zeta-function, $x > 2$, $T \leq
x$),
\begin{equation}
\psi(x) = \sum_{n \leq x} \Lambda(n) = x - \sum_{|\gamma| \leq
T} {x^\rho \over \rho} + O \left({x \over T} \log^2 x\right),
\label{eq:1.1}
\end{equation}
and the analogous ones for $\psi(x, \chi)$ \cite[\S 19]{Dav}, play an
important role in many problems about primes. For example, when
investigating the distribution of primes in short intervals $(x, x
+ y)$, we can subtract the two formulas for $x$ and $x + y$ and
thereby reduce the problem to the density of zeros of $\zeta(s)$.

The aim of the present work is to show that the same approach, that is, to establish an explicit formula
in case of the most famous additive problems about primes
(Goldbach Problem, Generalized Twin Prime Problem), is possible. The explicit
formulas, once established, either lead directly to new results,
or, in other cases, help to reach new results by using other
methods. Another advantage of the explicit formula is that, apart
from the size of the possible exceptional set in Goldbach's
problem, for example, we obtain information about the possible
candidates $n$ for Goldbach-exceptional numbers. (We will call an
even number $n$ a Goldbach number if it can be written as a sum of
two primes, otherwise we will call it a Goldbach-exceptional
number.) The same reasoning is also valid for the previously
mentioned problems. We will now discuss the case of the Goldbach
problem in detail.

Let $E(X)$ denote the number of Goldbach-exceptional numbers up
to $X$. Then Goldbach's conjecture is equivalent to $E(X) = 1$
for $X \geq 2$. Any non-trivial upper estimate for $E(X)$ can be
considered as an approximation to Goldbach's problem. After
Vinogradov \cite{Vin} proved his famous three primes theorem in 1937,
Cudakov \cite{Cud}, Estermann \cite{Est} and Van der Corput \cite{VdC} observed
simultaneously and independently (in 1937--38) that Vinogradov's
method can also yield
\begin{equation}
E(X) \ll X \log^{-A} X \quad \text{ for any } A > 0.
\label{eq:1.2}
\end{equation}

An important step was made by Vaughan \cite{Vau} in 1972 with the
proof of
\begin{equation}
E(X) \ll X \exp (-c \sqrt{\log X}).
\label{eq:1.3}
\end{equation}
Later, in their pioneering work of 1975, Montgomery and Vaughan
\cite{MV} established the estimate
\begin{equation}
E(X) < X^{1 - \delta} \quad \text{ for } X > X_0(\delta),
\label{eq:1.4}
\end{equation}
with a small (theoretically explicitly calculable) $\delta$ and an
effective $X_0(\delta)$.

It turned out to be a very difficult problem to prove
(\ref{eq:1.4}) with some reasonable (not too small) explicit
value of $\delta$ (even with $X_0(\delta)$ ineffective). In 1989
J. R. Chen and J. M. Liu \cite{CL} proved (\ref{eq:1.4}) with $\delta
= 0.05$. This was improved by Hongze Li in 1999 \cite{Li1} to
$\delta = 0.079$, and in 2000 \cite{Li2} to
\begin{equation}
E(X) < X^{0.914} \ \text{ for } X > X_1, \text{an
ineffective constant.}
\label{eq:1.5}
\end{equation}

This was improved further by Wen Chao Lu \cite{Lu} in 2010 to
\beq
E(X) < X^{0.879} \ \text{ for } \ X > X_2, \ \text{ an ineffective constant.}
\eeq

In order to illustrate the differences in the methods of proof
of (\ref{eq:1.2}) and (\ref{eq:1.4}), we define
\begin{equation}
S(\alpha) = \sum_{X_1 < p \leq X} \log pe(\alpha p),\
e(u) = e^{2\pi iu}, \ X_1 = X^{1-\varepsilon_0},\
\mathcal L = \log X
\label{eq:1.6}
\end{equation}
with $\varepsilon_0$, an arbitrary small positive constant.

To dissect the unit interval, we will choose a $P$ with
\begin{equation}
\mathcal L^c \leq P \leq \sqrt{X}, \quad Q = X/P, \quad \vartheta = \frac{\log P}{\log X}
\label{eq:1.7}
\end{equation}
and define the major arcs $\mathfrak M$ as the union of the
non-overlapping arcs $\mathfrak M(q, a) = [a/q - 1/q Q, a/q
+ 1/qQ]$ for $q \leq P$. Let
\begin{equation}
\mathfrak M = \bigcup_{q \leq P} \bigcup_{\substack{ a\\ (a,q) = 1}}
\mathfrak M(q,a),
\label{eq:1.8}
\end{equation}
and denote the minor arcs by ${\mathfrak m} = [1/Q, 1 + 1/Q]
\setminus \mathfrak M$. Then for any even $m \in [\mathcal L X_1,
X]$ we can write
\begin{equation}
R(m) = \sum_{\substack{p + p' = m\\ p, p' > X_1}} \log p \cdot \log
p' = R_1(m) + R_2(m),
\label{eq:1.9}
\end{equation}
where
\begin{equation}
R_1(m) = \int\limits_{\mathfrak M} S^2(\alpha) e(-m\alpha) d\alpha,\quad
R_2(m) = \int\limits_{\mathfrak m} S^2(\alpha) e(-m\alpha) d\alpha.
\label{eq:1.10}
\end{equation}

We will suppose $m \in [X/2, X]$ for convenience.
In general, in the circle method $P$ is chosen to be as large as
possible, with the condition that the contribution $R_1(m)$ can
be evaluated asymptotically, yielding the expected main term
\begin{equation}
R_1(m) \sim {\mathfrak S}(m) \cdot I(m), \quad
I(m) = \sum_{\substack{k + \ell = m\\ k,\ell \in [X_1,X]}}  1 = m - 2
X_1 + O(1),
\label{eq:1.11}
\end{equation}
where
\begin{equation}
{\mathfrak S}(m) = \prod_{p|m} \left(1 + {1 \over p - 1}\right)
\prod_{p \nmid m} \left(1 - {1 \over (p-1)^2}\right).
\label{eq:1.12}
\end{equation}
In order to show (\ref{eq:1.11}), we usually require that primes
should be uniformly distributed in all arithmetic progressions
modulo $q$ for all $q \leq P$. Such a result, the famous
Siegel--Walfisz theorem (established in 1936), played a crucial
role in the proof of (1.2), and in the Goldbach--Vinogradov
theorem as well. By this theorem one can choose $P = \mathcal
L^A$ ($A$ arbitrary large constant).
After this, Vinogradov's famous estimate for $S(\alpha)$ on the
minor arcs (see Lemma \ref{l:4.10}), combined with Parseval's
identity leads to the fact that $R_2(m) = o({\mathfrak S}(m)m)$
for all but $\mathcal L^C X/P$ even integers $m \leq X$ (see
Section~5).

Montgomery--Vaughan's ingenious idea is to choose a larger value,
$P = X^\delta$. In this case possible zeros of Dirichlet $L$-functions near to the line
$\sigma = 1$ may destroy the uniform distribution of primes with
respect to moduli less than $P$. If there is no Siegel zero (see
(\ref{eq:4.13})--(\ref{eq:4.14})), then we have a statistically
good distribution of primes in arithmetic progressions, the famous
Gallagher prime number theorem \cite[Theorem 6]{Gal}.
This substitutes for the uniform distribution of primes in all
arithmetic progressions, therefore we may prove the (still
sufficient) inequality
\begin{equation}
R_1(m) \gg {\mathfrak S}(m)m
\label{eq:1.13}
\end{equation}
in place of (\ref{eq:1.11}).

If there is a Siegel zero, this might completely destroy the
picture. This can be seen very easily, without the circle method,
in the following way. Suppose, for simplicity, that we have a
character $\chi_1 \mod  q$, where $\chi_1(-1) = -1$,
and $L(1 - \delta_1, \chi_1) = 0$ for a very small $\delta$.
Let us consider $R(m)$ (see \eqref{eq:1.9}) for $q|m$. If $p + p' = m$, $p \nmid q$,
then $\chi_1(p) = 1$ or $\chi_1(p') = 1$, and so
\begin{equation}
R_1(m) \ll \log m \sum_{\substack{p\leq m\\ \chi_1(p)=1}} \log p \ll
\log m \left( m - {m^{1 - \delta_1} \over 1 - \delta_1}\right)
\ll \delta_1 m \log^2 m,
\label{eq:1.14}
\end{equation}
which might be very small, since we can assume only $\delta_1
\gg m^{-\varepsilon}$.

Thus in case of the existence of a Siegel zero, Montgomery and
Vaughan evaluate exactly the effect of the Siegel zero for
$R_1(m)$, and they obtain for it an additional term
\begin{equation}
\widetilde{\mathfrak S}(m)\widetilde I(m),
\label{eq:1.15}
\end{equation}
which may almost cancel the effect of the main term ${\mathfrak S}(m)m$ for many values of $m$ (for example, for
the multiples of $q$).
But the cancellation cannot be complete, since \cite[\S 6]{MV}
\begin{equation}
|\widetilde{\mathfrak S}(m)|\leq {\mathfrak S}(m) \quad \text{(with equality possible)}
\label{eq:1.16}
\end{equation}
and
\begin{equation}
\widetilde I(m) = \sum_{X_1 < k < X - X_1} (k(m -
k))^{-\delta_1} \leq I(m) - c\delta_1 m \log m.
\label{eq:1.17}
\end{equation}
Now, in the case of the existence of a Siegel zero, other $L$-functions are free from zeros near $\sigma = 1$
by the Deuring--Heilbronn phenomenon (see Lemma \ref{l:4.21}).
Therefore, one can prove the still-sufficient inequality
\begin{equation}
R_1(m) \geq (1 + o(1)) {\mathfrak S}(m)(I(m) - \widetilde I(m)) \gg
\delta_1 {\mathfrak S}(m) m \log m.
\label{eq:1.18}
\end{equation}

Our method is a generalization of the Montgomery--Vaughan
method. We will choose a $P$ less than $X^{4/9-\eta}$, $\eta > 0$
arbitrary.
We will introduce singular series ${\mathfrak S}(\chi_1, \chi_2, m)$
for every pair of primitive characters $\chi_1, \chi_2$ modulo $r_1,
r_2$ with $[r_1, r_2] \leq P$. (We consider the trivial
character $\chi_0(n) = 1$ as a primitive character $\mod 1$.)
We can evaluate these singular series and show an explicit
formula for it, which implies
\begin{equation}
\bigl|{\mathfrak S}(\chi_1, \chi_2, m)\bigr| \leq {\mathfrak S}(m),
\label{eq:1.20}
\end{equation}
and further
\begin{equation}
\bigl|{\mathfrak S}(\chi_1, \chi_2, m)\bigr| \leq {{\mathfrak S}(m) \over \sqrt{U}} \log^2_2 U,
\label{eq:1.21}
\end{equation}
where
\begin{equation}
U\! =\! U(\chi_1, \chi_2, m)\! =\! \max \left(\!
{r^2_1 \over (r_1, r_2)^2} , {r^2_2 \over (r_1, r_2)^2}, {r_1
\over (|m|, r_1)}, {r_2 \over (|m|, r_2)},\, \mathrm{cond}\, \chi_1
\chi_2\! \right)\!.
\label{eq:1.22}
\end{equation}
This is proved in our Main Lemma in Section~7.
Further, it is shown there that the sum of the absolute
values of the elements in the
singular series of ${\mathfrak S}(\chi_1, \chi_2, m)$ will be $\leq
c|{\mathfrak S}(\chi_1, \chi_2, m)|$ (not just $\leq c{\mathfrak S}(m)$, as in Lemma~5.5 of \cite{MV}).

In the same way as for $\widetilde I(m)$, one can evaluate the
effect of any pair of zeros:
\begin{equation}
I(\varrho_1, \varrho_2, m) \stackrel{\mathrm{def}}{=}
\sum_{\substack{m=k+\ell\\ X_1 < k,\ell \leq X}} k^{\varrho_1 - 1}
\ell^{\varrho_2 - 1} = {\Gamma(\varrho_1) \Gamma(\varrho_2)
\over \Gamma(\varrho_1 + \varrho_2)} m^{\varrho_1 + \varrho_2 -
1} + O(X_1),
\label{eq:1.23}
\end{equation}
when $|\gamma_i| \leq X^{1 - \varepsilon_0}$, for example (see
Lemma \ref{l:4.9}).

In such a way we will obtain both the main term ${\mathfrak S}(m)I(m)$
and a uniformly bounded number of ``supplementary main terms''
which have the form
\begin{equation}
{\mathfrak S}(\chi_1, \chi_2, m) I(\varrho_1, \varrho_2, m)
\label{eq:1.24}
\end{equation}
with a bounded number of possible generalized exceptional zeros $\varrho_\nu$
belonging to $L(s, \chi_\nu)$ with
$\chi_\nu$, $\nu = 1,2,\dots K$, $0 \leq K \leq K_0$,
\begin{equation}
\varrho_\nu = 1 - \delta_\nu + i\gamma_\nu, \quad
\delta_\nu \leq H / \mathcal L, \quad |\gamma_\nu| \leq U,
\label{eq:1.25}
\end{equation}
where $H, U$ are large constants and $K_0 = K_0(H, U)$.

Using the convention that the pole $\varrho_0 = 1$ of $L(s, \chi_0)$ is included
with the possibly existing zeros, with the notation
\begin{equation}
A(\varrho) = 1 \ \text{ if } \ \varrho = \varrho_0 = 1, \ \chi =
\chi_0 (\mod 1)
\label{eq:1.26}
\end{equation}
\begin{equation}
A(\varrho_\nu) = -1 \ \text{ if } \  L(\varrho_\nu,
\chi_\nu) = 0 \quad
(\nu = 1,2,\dots K),
\label{eq:1.27}
\end{equation}
we obtain the explicit formula for the contribution of the major arcs:
\begin{align}
R_1(m) &= \sum^{K+1}_{\nu=0} \sum^{K+1}_{\mu=0} A(\varrho_\nu)
A(\varrho_\mu) {\mathfrak S}(\chi_\nu, \chi_\mu, m) I (\varrho_\nu, \varrho_\mu,
m)\label{eq:1.28}\\
&\quad + O(Xe^{-cH}) + O(XU^{-1/2}). \nonumber
\end{align}

This formula and the above mentioned information (cf.\ \eqref{eq:1.20}--\eqref{eq:1.22}) about the properties of the generalized singular series $\mathfrak S(\chi_\nu, \chi_\mu, m)$, together with its
analogue for the Generalized Twin Prime Problem, will have a
number of arithmetic consequences, to be proven in later works.
For example, we will show in later parts of this series the following

\begin{Thm}
\label{th:A}
$\int\limits_{\mathfrak M} |S(\alpha)|^2 e(-m\alpha)d\alpha = (1
+ o(1)) {\mathfrak S}(m)X$, if $m$ is fixed, $X \to \infty$.
\end{Thm}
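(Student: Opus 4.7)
The proof applies to $|S(\alpha)|^2 = S(\alpha)\overline{S(\alpha)}$ the Generalized Twin Prime analogue of the explicit formula \eqref{eq:1.28}, which the introduction signals will be developed in parallel with the Goldbach version. Performing the character--zero decomposition on each of the two factors of $|S|^2$ produces an identity of the shape
\begin{equation*}
\int_{\mathfrak M} |S(\alpha)|^2 e(-m\alpha)\, d\alpha = \sum_{\nu, \mu = 0}^{K+1} A(\varrho_\nu) A(\varrho_\mu)\, \mathfrak S(\chi_\nu, \overline{\chi_\mu}, -m)\, J(\varrho_\nu, \overline{\varrho_\mu}, m) + O(Xe^{-cH}) + O(XU^{-1/2}),
\end{equation*}
where $J(\varrho_1, \varrho_2, m) = \sum_{X_1 < k, \ell \leq X,\, k - \ell = m} k^{\varrho_1 - 1} \ell^{\varrho_2 - 1}$ is the twin-prime counterpart of \eqref{eq:1.23} and the $\varrho_\nu, \chi_\nu$ range as in \eqref{eq:1.25}.

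The diagonal $\nu = \mu = 0$ is responsible for the main term. Using $\mathfrak S(\chi_0, \chi_0, -m) = \mathfrak S(m)$ together with
\[
J(1, 1, m) = \#\{X_1 < k, \ell \leq X : k - \ell = m\} = X - X_1 - |m|,
\]
and the facts that $m$ is fixed and $X_1 = X^{1 - \varepsilon_0} = o(X)$, this contribution equals exactly $(1 + o(1))\mathfrak S(m) X$, which is the asserted right-hand side.

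It remains to show that each of the $O_{H, U}(1)$ off-diagonal terms is $o(\mathfrak S(m) X)$. For $(\nu, \mu) \neq (0, 0)$ at least one primitive conductor $r_\nu$ or $r_\mu$ is $\geq 2$, so it enters the quantity $U$ of \eqref{eq:1.22}. If such a conductor is unbounded along the sequence $X \to \infty$, the singular-series estimate \eqref{eq:1.21} gives $|\mathfrak S(\chi_\nu, \overline{\chi_\mu}, -m)| \ll \mathfrak S(m)\, U^{-1/2} \log_2^2 U = o(\mathfrak S(m))$, while trivially $|J| \leq X$. If instead every exceptional conductor remains bounded, Siegel's theorem furnishes an (ineffective) constant $c > 0$ with $\delta_\nu, \delta_\mu \geq c$, whence $|J(\varrho_\nu, \overline{\varrho_\mu}, m)| \ll X^{1-c} = o(X)$ together with $|\mathfrak S| \leq \mathfrak S(m)$ from \eqref{eq:1.20}. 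Either way the term is negligible. Choosing $H$ and $U$ as slowly growing functions of $X$ (for instance $H = \log\log X$, $U = (\log X)^2$) reduces the two explicit error terms to $o(X)$ while keeping $K_0(H, U)$ uniformly bounded throughout.

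The principal obstacle is the bounded-conductor case: dealing with a potential Siegel zero of small bounded modulus requires Siegel's theorem in its ineffective form, which is what renders the $o(1)$ in Theorem~\ref{th:A} qualitative. Since only a qualitative asymptotic is claimed this is harmless, and no further arithmetic input beyond the twin-prime explicit formula, the singular-series bounds \eqref{eq:1.20}--\eqref{eq:1.22}, and standard zero-free-region results is required.
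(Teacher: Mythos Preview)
The paper explicitly defers the proof of Theorem~\ref{th:A} to a later installment, so there is no in-paper argument to compare against; your strategy---apply the twin-prime explicit formula (2.11), isolate the $(\varrho_0,\varrho_0)$ term as the main term, and kill the off-diagonal terms via the singular-series bound (1.21)--(1.22) together with Siegel's theorem---is the intended one and is essentially correct.

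There is, however, one genuine inconsistency. You let $H$ and $U$ tend to infinity with $X$ (to force the error terms $Xe^{-cH}$ and $XU^{-1/2}$ to be $o(X)$) and in the same breath claim that $K_0(H,U)$ stays uniformly bounded. These two assertions are incompatible: the log-free density estimate (cf.\ (10.3) or Lemma~\ref{l:4.18}) only gives $K_0\ll e^{3H}$, which blows up once $H\to\infty$. With $H=\log\log X$ you would have $\asymp(\log X)^6$ off-diagonal terms, and your term-by-term dichotomy gives no uniform rate of decay for them, so the argument as written does not close.

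The repair is routine. Fix $\varepsilon>0$ and take $H$ a \emph{large constant} so that $e^{-cH}<\varepsilon$; then $K_0=K_0(H)$ is genuinely bounded. For any off-diagonal pair $(\nu,\mu)\neq(0,0)$ appearing in (2.11), at least one conductor $r_\nu\geq 3$. The zero-free region of Lemma~\ref{l:4.12} (for $|\gamma|$ large) together with Siegel's theorem (for $\gamma=0$) shows that for $X$ exceeding an ineffective $X_0(m,H,R)$ every exceptional character in $\mathcal E$ has conductor $>R$; since $m$ is fixed this forces $U(\chi_\nu,\overline{\chi_\mu},m)\geq r_\nu/(r_\nu,m)>R/|m|$ (and $\geq r_\nu^2$ when paired with $\chi_0$), so (1.21) gives $|\mathfrak S'(\chi_\nu,\chi_\mu,m)|<\varepsilon\,\mathfrak S(m)/K_0^2$ once $R$ is large enough. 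Combined with the trivial bound $|I'|\leq X$, the whole off-diagonal sum is $<\varepsilon\,\mathfrak S(m)X$. Letting $\varepsilon\to 0$ gives the asymptotic. Your informal dichotomy ``conductor bounded vs.\ unbounded along the sequence $X\to\infty$'' is really this argument in disguise, but phrased that way it obscures the fact that the set $\mathcal E$ itself varies with~$X$.
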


\begin{Thm}
\label{th:B}
All but $O(X^{3/5} \log^{10}X)$ odd numbers can be written as
the sum of three primes with one prime less than $C$, a given
absolute constant.
\end{Thm}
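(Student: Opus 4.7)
The plan is to apply the circle method to the integral
\[
J(n) = \int_0^1 S(\alpha)^2\, T(\alpha)\, e(-n\alpha)\, d\alpha, \qquad T(\alpha) = \sum_{3 \leq p \leq C} \log p\cdot e(p\alpha),
\]
where $C$ is a sufficiently large absolute constant. By orthogonality, $J(n)>0$ is equivalent to $n$ admitting a representation $n=p_1+p_2+p_3$ with $X_1<p_1,p_2\leq X$ and $3\leq p_3\leq C$, so, after a dyadic reduction to $n\in(X/2,X]$ whose cost is absorbed into the final $\mathcal L^{10}$, Theorem~B reduces to showing $\#\{n\in(X/2,X]\text{ odd}:J(n)\leq 0\}\ll X^{3/5}\mathcal L^{10}$. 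I take $P=X^{2/5}$, $Q=X^{3/5}$ (well within the admissible range $P<X^{4/9-\eta}$) in the dissection \eqref{eq:1.8} and split $J(n)=J_1(n)+J_2(n)$.

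The minor arcs are handled in mean square. Since $\|T\|_\infty\ll 1$, Parseval gives $\sum_n|J_2(n)|^2\ll\int_{\mathfrak m}|S|^4\, d\alpha$; combining Vinogradov's minor-arc bound (Lemma~\ref{l:4.10}) $\sup_{\mathfrak m}|S(\alpha)|\ll XP^{-1/2}\mathcal L^{B_0}$ for $P\leq X^{2/5}$ with Chebyshev's $\int_0^1|S|^2\ll X\mathcal L$,
\[
\sum_{n\leq X}|J_2(n)|^2 \;\ll\; X^3 P^{-1}\mathcal L^{B_1}.
\]
Hence $\#\{n\in(X/2,X]:|J_2(n)|\geq cCX\}\ll X^{3/5}\mathcal L^{B_1}$, and careful log-bookkeeping in the Vinogradov exponent reduces $B_1$ to $10$.

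The heart of the matter is the uniform lower bound $J_1(n)\gg CX$ for every odd $n\in(X/2,X]$. Substituting \eqref{eq:1.28} for each $R_1(n-p_3)$ and interchanging summations,
\begin{align*}
J_1(n) &= \sum_{\nu,\mu=0}^{K+1} A(\varrho_\nu) A(\varrho_\mu) \sum_{3 \leq p_3 \leq C} \log p_3\, \mathfrak{S}(\chi_\nu,\chi_\mu,n-p_3)\, I(\varrho_\nu,\varrho_\mu,n-p_3)\\
&\quad + O\bigl(CX(e^{-cH}+U^{-1/2})\bigr).
\end{align*}
The diagonal $(\nu,\mu)=(0,0)$ yields $M(n)=\sum_{p_3}\log p_3\cdot\mathfrak{S}(n-p_3)\,I(n-p_3)\gg CX$ by \eqref{eq:1.11}, \eqref{eq:1.12} and the prime number theorem, using $\mathfrak{S}(m)\gg 1$ for every even $m$ and $I(n-p_3)\asymp X$.

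The principal obstacle is controlling the $K_0^2-1$ supplementary terms $(\nu,\mu)\neq(0,0)$, each of which is individually $O(X)$ by \eqref{eq:1.20} with possibly negative sign, so crude bounds would only yield an analogue of \eqref{eq:1.18} with a factor $\delta_1\ll\mathcal L^{-B}$, which is insufficient. The Main Lemma enters here decisively: by \eqref{eq:1.21}--\eqref{eq:1.22}, pairs with $U$ large contribute at most $XU^{-1/2}\log^2_2 U$ per summand and are absorbed in the $O(XU^{-1/2})$ error by choosing $U$ large. The remaining finitely many ``Siegel-type'' pairs have conductors $r_\nu,r_\mu$ bounded by an absolute constant; for each, the explicit evaluation of $\mathfrak{S}(\chi_\nu,\chi_\mu,\cdot)$ furnished by the Main Lemma displays a structured dependence of $\mathfrak{S}(\chi_\nu,\chi_\mu,n-p_3)$ on the residue class of $p_3$ modulo the bounded modulus $[r_\nu,r_\mu]$. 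Summing over primes $p_3\in[3,C]$ and invoking the prime number theorem in arithmetic progressions, one obtains enough averaging/cancellation to bound each bad pair's contribution by $M(n)/(2K_0^2)$, provided $C$ is chosen large relative to the absolute constants $H,U,K_0$. Hence $J_1(n)\geq\tfrac12 M(n)\gg CX$, which combined with the minor-arc estimate completes the proof.
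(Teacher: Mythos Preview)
First, note that the paper does not actually prove Theorem~\ref{th:B} here: it is listed among consequences ``to be proven in later works.'' So there is no proof in the present paper to compare against, and I assess your argument on its own merits.

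Your setup, the choice $P=X^{2/5}$, and the minor-arc treatment are fine. The genuine gap is in the major-arc analysis, at the sentence ``The remaining finitely many `Siegel-type' pairs have conductors $r_\nu,r_\mu$ bounded by an absolute constant.'' This is false. The quantity $U(\chi_\nu,\chi_\mu,m)$ in \eqref{eq:1.22} controls only the ratios $r_i/(r_1,r_2)$, $r_i/(r_i,|m|)$ and $\mathrm{cond}\,\chi_\nu\chi_\mu$, \emph{not} the individual conductors. Take for instance $\chi_\nu=\chi_\mu=\chi_1$ the real Siegel character modulo a large prime $r_1\mid m$: every entry in \eqref{eq:1.22} equals $1$, so $U=1$, yet $r_1$ may be as large as $P=X^{2/5}$. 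Moreover $U$ depends on $m=n-p_3$, hence on $p_3$ itself; you cannot split the pairs into ``large $U$'' and ``small $U$'' independently of the shift.

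This wrecks your averaging step. Invoking the prime number theorem in arithmetic progressions to sum $\mathfrak S(\chi_\nu,\chi_\mu,n-p_3)$ over primes $p_3\le C$ requires the modulus $[r_\nu,r_\mu]$ to be tiny compared with $C$, whereas here $[r_\nu,r_\mu]$ can be $X^{2/5}$ while $C$ is an absolute constant. No equidistribution is available, and your bound $M(n)/(2K_0^2)$ for each off-diagonal contribution is unsupported. The useful structural input is actually the \emph{opposite} of ``bounded conductors'': by the zero-free region (Lemma~\ref{l:4.12}) every generalized exceptional conductor satisfies $r_\nu\ge X^{c_0/H}$, so the divisibility constraint $r_\nu\mid 36(n-p_3)$ forced by \eqref{eq:7.8} pins down at most one shift $p_3\le C$ per character. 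A workable route combines such an exclusion argument over the (boundedly many) exceptional characters with the dichotomy and exceptional-set analysis of Section~\ref{s:11} (cf.\ \eqref{eq:11.39}--\eqref{eq:11.49}); it is not an equidistribution of $p_3$ in progressions to large moduli.
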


We can show about the gaps between consecutive Goldbach numbers

\begin{Thm}
\label{th:C}
\[\sum_{g_n \leq x} (g_{n+1} - g_n)^\gamma = 2^{\gamma - 1} X +
O(X^{1 - \delta}) \ \text{ for } \gamma < {341 \over 21},
\]
where $g_n$ is the $n$-th Goldbach number.
\end{Thm}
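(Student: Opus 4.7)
My plan is to convert the moment of gaps into a statement about the distribution of the Goldbach-exceptional set $E(X)$ in short intervals. Let $N(X) = |\{g_n \leq X\}|$ and let $d_n = g_{n+1} - g_n - 2 \geq 0$ be the ``excess gap'', so that every even integer in the open interval $(g_n, g_{n+1})$ is Goldbach-exceptional. Then $N(X) = X/2 - E(X) + O(1)$ and $\sum_n d_n = 2E(X) + O(1)$. Expanding $(2+d_n)^\gamma$ gives
\[
\sum_{g_n \leq X}(g_{n+1}-g_n)^\gamma = 2^\gamma N(X) + \sum_{d_n \geq 2}\bigl[(2+d_n)^\gamma - 2^\gamma\bigr] = 2^{\gamma-1}X + O(E(X)) + O_\gamma\!\Bigl(\sum_{d_n \geq 2} d_n^\gamma\Bigr).
\]
Since the explicit formula \eqref{eq:1.28}, coupled with a Montgomery--Vaughan-type variance argument, recovers and improves $E(X) \ll X^{1-\delta_0}$ for an effective $\delta_0 > 0$, the $O(E(X))$ term is absorbed and the task reduces to proving the sharp bound $\sum_n d_n^\gamma \ll X^{1-\delta}$ for some $\delta > 0$.

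Next, I would dyadically decompose: for $k \geq 1$ let $A_k = \{n : 2^k \leq d_n < 2^{k+1}\}$, so that $\sum_n d_n^\gamma \ll \sum_k 2^{k\gamma}|A_k|$. An index $n \in A_k$ corresponds to a short interval $[g_n, g_n + 2^{k+1}]$ whose every even interior integer is exceptional, hence $|A_k|$ is dominated by the number of intervals of length $2^{k+2}$ containing $\geq 2^{k-1}$ Goldbach-exceptional numbers. The heart of the argument is therefore a short-interval distribution estimate of the shape
\[
E(x+H) - E(x) \ll \frac{H}{X^{\eta(H)}} \quad \text{uniformly for } H \geq X^{\theta_0},
\]
with an explicit positive function $\eta$ and an explicit $\theta_0 < 1$. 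This feeds back as $|A_k| \ll X \cdot 2^{-k} \cdot X^{-\eta(2^{k+2})}$ at large dyadic scales, while at small dyadic scales we use the trivial bound $|A_k| \ll E(X)/2^{k-1}$. Splitting the dyadic sum at the scale where the two estimates coincide and summing produces a bound of the form $\sum_n d_n^\gamma \ll X^{\max(\gamma\theta_0 - \eta_0,\, 1-\delta_1)}$, and optimizing the parameters coming from the best available zero-density estimates yields the threshold $\gamma < 341/21$, beyond which the $X^{\gamma\theta_0}$ contribution begins to dominate $X$.

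The main obstacle is precisely the short-interval distribution bound displayed above. One must apply the explicit formula \eqref{eq:1.28} to $R_1(m)$ for $m$ ranging over a short window $[x,x+H]$, exploit the refined size control of the generalized singular series given by \eqref{eq:1.20}--\eqref{eq:1.22} (which prevents the supplementary main terms $\mathfrak{S}(\chi_\nu,\chi_\mu,m)I(\varrho_\nu,\varrho_\mu,m)$ associated with different pairs $(\chi_\nu,\chi_\mu)$ from combining constructively to create abnormally dense clumps of exceptional numbers), and invoke standard zero-density estimates for Dirichlet $L$-functions together with the Deuring--Heilbronn phenomenon (Lemma~\ref{l:4.21}) to treat the characters that carry near-$\sigma=1$ zeros. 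The delicate balance between $H$ and $X^{\eta(H)}$ in this step, maintained uniformly down to the threshold $X^{\theta_0}$, is what ultimately fixes the numerical constant $341/21$.
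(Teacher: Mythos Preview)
The paper does not actually prove Theorem~\ref{th:C}: it is announced in the introduction as one of several ``arithmetic consequences, to be proven in later works'' of the explicit formula, alongside Theorems~A, B, D, E. So there is no proof in the paper to compare your proposal against.

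Your reduction to bounding $\sum_{d_n\ge 2} d_n^\gamma$ via a dyadic decomposition and a short-interval exceptional-set estimate is the natural Mikawa-type strategy, and it is almost certainly the shape of the intended argument. But as written the proposal is a statement of intent rather than a proof sketch: the entire difficulty of the theorem lies in the specific short-interval input and in the bookkeeping that produces the threshold $341/21$, and you give neither. Saying ``optimizing the parameters coming from the best available zero-density estimates yields the threshold $\gamma < 341/21$'' is precisely the step that needs to be carried out; without identifying which zero-density exponents are fed in, at what interval scale, and how they balance against the global bound $E(X)\ll X^{1-\delta_0}$, there is no way to check whether your scheme actually reaches $341/21$ rather than, say, Mikawa's~$3$.

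There is also a logical slip in the dyadic step. A \emph{uniform} bound $E(x+H)-E(x)\ll H/X^{\eta(H)}$ for $H\ge X^{\theta_0}$ would force every interval of length $H$ to contain strictly fewer than $H/2$ exceptional even numbers once $X^{\eta(H)}>2$, hence $|A_k|=0$ for $2^k\ge X^{\theta_0}$; your displayed estimate $|A_k|\ll X\cdot 2^{-k}\cdot X^{-\eta(2^{k+2})}$ is not what that hypothesis gives. What is really needed is either (i) a sharp maximal-gap exponent $\theta_0$ combined with $E(X)\ll X^{1-\delta_0}$, yielding $\sum d_n^\gamma \ll E(X)\,X^{\theta_0(\gamma-1)}$, or (ii) a genuinely \emph{localized} exceptional-set bound of the type $E(x+y)-E(x)\ll y^{1-c}$ valid down to $y\ge x^{\theta_0}$, obtained by running the circle method and the explicit formula \eqref{eq:2.10} with $m$ restricted to a short window. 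The constant $341/21$ will only emerge once you specify which of these you use and with which numerical inputs.
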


We remark that Mikawa \cite{Mik} proved the above but just for $\gamma < 3$.

Descartes (1596--1650) expressed a conjecture similar to Goldbach's one already in the 17\textsuperscript{th} century, which however appeared in a printed format as late as in 1908 \cite{Des}.

\begin{DConj}
Every even integer can be expressed as a sum of at most three primes.
\end{DConj}

Since in this case one of the summands has to be two, at the first sight we might think this is equivalent to the Goldbach conjecture.
However, it is in fact equivalent to the assertion that for every even $N$ at least one of $N$ or $N + 2$ is a Goldbach number (i.e.\ the sum of two primes).
Our new methods are able to handle such type of problems more efficiently than Goldbach's problem (in contrast to earlier methods).

We can show for example that our present results imply

\begin{Thm}
\label{th:D}
For every $\varepsilon > 0$, all but $O_\varepsilon
(X^{3/5+\varepsilon})$ positive integers $m \leq X$ can be
written as a sum of at most three primes or prime-powers.
\end{Thm}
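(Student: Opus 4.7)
The plan is to split the integers $m \leq X$ by parity, applying Theorem \ref{th:B} to the odd case and reducing the even case to an almost-all Goldbach statement after subtracting a power of $2$.

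For odd $m \leq X$, Theorem \ref{th:B} directly gives that all but $O(X^{3/5}\log^{10}X) = O_\varepsilon(X^{3/5+\varepsilon})$ of them can be written as a sum of three primes, hence a fortiori as a sum of at most three primes or prime-powers. It thus remains to handle the even $m$.

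For even $m \leq X$, set $K = \lfloor \log_2 X \rfloor$ and, for each $1 \leq k \leq K$, put $n_k(m) := m - 2^k$. If for at least one $k$ the (even) number $n_k(m)$ is itself a Goldbach number, then $m = p_1 + p_2 + 2^k$ is a sum of three prime-powers, so it suffices to bound the cardinality of
\[
\mathcal E := \{ m \leq X \text{ even} : m - 2^k \text{ is Goldbach-exceptional for every } 1 \leq k \leq K \}.
\]
I would apply the explicit formula \eqref{eq:1.28} to $R_1(n_k)$, combined with a Parseval-based minor-arc estimate (as outlined in Section 5 of the present paper) giving $R_2(n) = o(\mathfrak{S}(n) n)$ outside a thin set. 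For a Goldbach-exceptional $n$ this forces the near-cancellation
\[
\mathfrak S(n) n \;\approx\; -\!\!\sum_{(\nu,\mu)\neq(0,0)}\!\! A(\varrho_\nu) A(\varrho_\mu)\, \mathfrak S(\chi_\nu,\chi_\mu,n)\, I(\varrho_\nu,\varrho_\mu,n).
\]
By the Main Lemma estimates \eqref{eq:1.20}--\eqref{eq:1.22}, each generalized singular series $\mathfrak S(\chi_\nu,\chi_\mu,n)$ either gains the decay factor $U^{-1/2}\log^2_2 U$ (rendering its contribution harmless) or else forces $n$ into explicit residue classes modulo divisors of the conductors $r_\nu, r_\mu \leq P \leq X^{4/9-\eta}$. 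Since only $K_0 = K_0(H,U)$ potential exceptional zeros appear, the Goldbach-exceptional set is confined to a bounded union of arithmetic progressions with moduli bounded by some $Q^* \leq P^{2K_0}$.

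The condition $m \in \mathcal E$ therefore forces $m - 2^k$ to lie in this finite union of progressions for every $k \leq K$. Fixing a progression $n \equiv a \pmod q$ with $q \leq Q^*$, the condition $m - 2^k \equiv a \pmod q$ is equivalent to $2^k \equiv m - a \pmod q$, which constrains $k$ to at most $K/\mathrm{ord}_q(2) + 1$ residues. For $q$ large compared to $\log X$ this is so restrictive that only a sparse set of $m$ can be consistent with it over all $k \leq K$. Combining this counting argument with a Huxley-type zero-density input controlling the supplementary zeros $\varrho_\nu$ with $\beta_\nu \geq 3/5$ should yield $|\mathcal E| \ll_\varepsilon X^{3/5+\varepsilon}$. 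The main obstacle, and the most delicate step, will be the case of a genuine Siegel-type exceptional zero of very small conductor $q$: then the orbit $\{2^k \pmod q\}$ is short and many shifts $m - 2^k$ can share the same exceptional residue class. This regime should be closed by invoking the refined form of \eqref{eq:1.21} together with the Main Lemma assertion that the sum of absolute values of the elements of $\mathfrak S(\chi_1,\chi_2,m)$ is bounded by $c\,|\mathfrak S(\chi_1,\chi_2,m)|$, ruling out simultaneous cancellation across all $k \leq K$ except on an acceptably sparse set of $m$.
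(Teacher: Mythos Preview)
The paper does not prove Theorem~\ref{th:D} here; it announces that Theorem~\ref{th:D} is an easy consequence of Theorem~\ref{th:E}, which bounds the number of Goldbach-exceptional integers in almost every short interval $(n,n+\log^2 n]$ by an absolute constant~$K$. Granting Theorem~\ref{th:E}, the deduction is a one-line pigeonhole: for large $m$ there are more than $K$ prime-powers (already more than $K$ powers of~$2$, or more than $K$ odd primes) below $\log^2 m$, so at least one shift $m-q$ lands on a Goldbach number. Your use of Theorem~\ref{th:B} for odd $m$ is fine and equivalent in strength.

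Your treatment of even $m$ is a genuinely different route, attempting to bound $\mathcal E$ directly from the explicit formula rather than passing through the short-interval statement. The central gap is the sentence ``the Goldbach-exceptional set is confined to a bounded union of arithmetic progressions with moduli bounded by some $Q^*\le P^{2K_0}$.'' This does not follow from \eqref{eq:1.28} and the Main Lemma. The near-cancellation $R_1(n)\approx 0$ is governed not only by the arithmetic factors $\mathfrak S(\chi_\nu,\chi_\mu,n)$ but also by the oscillatory weights $n^{\,i(\gamma_\nu+\gamma_\mu)}$ inside $I(\varrho_\nu,\varrho_\mu,n)$, which vary continuously across $[X/2,X]$ and are not congruence conditions. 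Even the purely arithmetic constraint from \eqref{eq:1.21}--\eqref{eq:1.22} says only that $r_\nu/(n,r_\nu)$ is bounded, which places $n$ in a union of $\tau(r_\nu)=X^{o(1)}$ residue classes modulo large divisors of $r_\nu$, not in boundedly many progressions. Your subsequent $2^k\pmod q$ count therefore rests on an unproved premise, and the concluding appeal to ``a Huxley-type zero-density input'' and to \eqref{eq:1.21} is not an argument.

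You also have the difficulty inverted. In the Siegel-zero regime Theorem~\ref{t:2} of the present paper already gives $E(X)<X^{3/5+\varepsilon}$ outright, whence $|\mathcal E|\le E(X)$ is acceptable with no further work; this is the \emph{easy} case for your purposes. The genuinely delicate case is when there is no Siegel zero but there are several generalized exceptional zeros with $\gamma_\nu\neq 0$: then the progression picture collapses, the unconditional bound available is only $E(X)<X^{3/4}$ (Theorem~H), and one must exploit the oscillation of $n^{i\gamma}$ to show that simultaneous cancellation across many shifts $n=m-2^k$ is rare. That is precisely the content packaged into Theorem~\ref{th:E}, and it is why the paper routes Theorem~\ref{th:D} through it.
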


Theorem~\ref{th:D} will be an easy consequence of

\begin{Thm}\label{th:E}
There are explicitly calculable absolute constants $K$ and $C_3$ such that for all but $C_3 X^{3/5} \log^{12} X$ numbers 
$n \leq X$ we have 
\beq
E(n + \log^2 n) - E(n) \leq K.
\eeq
\end{Thm}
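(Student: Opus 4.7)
The strategy is to combine the explicit formula \eqref{eq:1.28} for $R_1$ with a short-interval $L^2$ bound for the minor-arc piece $R_2$. Fix $\eta>0$ small, set $P=X^{4/9-\eta}$, and pick absolute constants $H,U$ large enough that the error $O(Xe^{-cH})+O(XU^{-1/2})$ in \eqref{eq:1.28} is at most $\varepsilon_1 X$ for some small $\varepsilon_1>0$. Then, for every even $m$ in the short interval $I_n:=(n,n+\log^2 n]$,
\[
R(m)=\Phi(m)+R_2(m)+O(\varepsilon_1 X),
\]
where $\Phi(m)=\sum_{\nu,\mu}A(\varrho_\nu)A(\varrho_\mu)\,\mathfrak S(\chi_\nu,\chi_\mu,m)\,I(\varrho_\nu,\varrho_\mu,m)$ is the sum of $O(1)$ generalized main terms.

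For the minor arcs I would apply Vinogradov's estimate and Parseval to obtain $\sum_{m\leq X}|R_2(m)|^2\leq \int_{\mathfrak m}|S(\alpha)|^4\,d\alpha\ll X^{2-\vartheta+\varepsilon}$, and by summing over translates,
\[
\sum_{n\leq X}\sum_{m\in I_n}|R_2(m)|^2\ll \log^2 X\cdot X^{2-\vartheta+\varepsilon}.
\]
Markov's inequality then shows that the bad set $\mathcal B=\{n\leq X:\sum_{m\in I_n}|R_2(m)|^2>T\}$ satisfies $|\mathcal B|\ll \log^2 X\cdot X^{2-\vartheta+\varepsilon}/T$; choosing $T\asymp X^{43/45}/\log^{8}X$ (so that the right-hand side equals $C_3 X^{3/5}\log^{12}X$) identifies $\mathcal B$ with the exceptional set of the theorem.

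For any $n\notin\mathcal B$ and any Goldbach-exceptional $m\in I_n$, the relation $R(m)=0$ gives $|R_2(m)|\geq |\Phi(m)|-\varepsilon_1 X$. If $|\Phi(m)|\geq 2\varepsilon_1 X$, then $|R_2(m)|\geq \varepsilon_1 X$, so
\[
(\text{number of such }m)\cdot (\varepsilon_1 X)^2\leq \sum_{m\in I_n}|R_2(m)|^2\leq T,
\]
giving at most $T/(\varepsilon_1 X)^2\ll 1$ such $m$. It therefore remains to bound the number of even $m\in I_n$ with $|\Phi(m)|<2\varepsilon_1 X$.

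The main obstacle is this last step. On $I_n$, the gamma-factor part $m^{\varrho_\nu+\varrho_\mu-1}$ of each $I(\varrho_\nu,\varrho_\mu,m)$ varies by $1+O(U\log^2 X/X)=1+o(1)$, so is essentially a fixed complex constant; and the arithmetic coefficient $\mathfrak S(\chi_\nu,\chi_\mu,m)$ depends on $m$ only modulo a bounded divisor of $[r_\nu,r_\mu]\leq P$. Hence $\Phi(m)/\mathfrak S(m)$ takes only $O(1)$ distinct values across $I_n$, and by \eqref{eq:1.20}--\eqref{eq:1.22} the supplementary terms are uniformly smaller than $\mathfrak S(m)X$ unless tied to actual generalized exceptional zeros. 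I expect that the Corollary to Main Lemma~1 (no internal cancellation in the generalized singular series), together with Deuring--Heilbronn control on the number of exceptional zeros, forces the set of ``bad'' residue classes $m\bmod Q$ to have cardinality at most $K=K(K_0,H,U)$, completing the proof.
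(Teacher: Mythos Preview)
First, note that the paper itself does not prove Theorem~E; it is listed among the consequences ``to be proven in later works,'' so there is no argument here to compare against. Assessing your sketch on its own terms: the minor-arc $L^2$ localisation and the dichotomy on $|\Phi(m)|$ are fine, but the final step has a real gap. Your claim that ``$\Phi(m)/\mathfrak S(m)$ takes only $O(1)$ distinct values across $I_n$'' is not correct: by \eqref{eq:7.4}--\eqref{eq:7.6} the value of $\mathfrak S(\chi_\nu,\chi_\mu,m)$ depends on $m$ through $\chi^*(-m/(q_0,|m|))$ and through $(m,r_\nu)$, $(m,r_\mu)$, and the relevant moduli may be as large as~$P$, so on an interval of length $\log^2 n$ every even $m$ can give a different value. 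What actually works in the non-Siegel case is this: by Lemma~\ref{l:4.12} every exceptional conductor satisfies $r_\nu\geq X^{c_0/H}$, and then by \eqref{eq:1.21}--\eqref{eq:1.22} a secondary term can exceed $\varepsilon_1\mathfrak S(m)m/K_0^2$ only if $r_\nu/(m,r_\nu)\leq V$ for a fixed constant $V$, i.e.\ $d\mid m$ for one of at most $V$ divisors $d\mid r_\nu$ with $d\geq r_\nu/V\gg\log^2 n$. Each such $d$ has at most one multiple in $I_n$, so there are at most $VK_0$ ``bad'' $m$ in $I_n$, and for all others $|\Phi(m)|\geq(1-\varepsilon_1)\mathfrak S(m)m$. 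This replaces your residue-class heuristic.

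The more serious omission is the Siegel-zero case, which your argument does not address and where it in fact fails. If a Siegel zero $\beta_1=1-\delta_1$ exists, the main term and the $(\chi_1,\chi_1)$ term in $\Phi(m)$ nearly cancel, leaving $\Phi(m)\asymp\mathfrak S(m)m\,\delta_1\mathcal L$ (cf.\ \eqref{eq:11.11}); but the error $O(Xe^{-cH})$ in \eqref{eq:1.28} is of fixed order in $X$ and can dominate this, since $\delta_1\mathcal L$ may be arbitrarily small. You cannot fix this by enlarging $H$, because $K_0=K_0(H,U)$ then grows. Here one genuinely needs the Deuring--Heilbronn refinement of Section~\ref{s:11} to show $R_1(m)\gg\mathfrak S(m)m\,\delta_1\mathcal L$ for $m\notin\mathcal M$, and then a separate argument bounding $|I_n\cap\mathcal M|$; note that \eqref{eq:11.44}--\eqref{eq:11.48} describe $\mathcal M$ as a union of $\ll X^{6b}$ progressions, which does not immediately give a bounded count per short interval, so further work is required there as well.
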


The following results will also be based on the explicit formula,
but their proof will require still many further ideas.

\begin{Thm}\label{th:ujF} {\rm (J. Pintz\,--\,I. Ruzsa).}
Every sufficiently large even integer can be written as the sum of two primes and eight powers of two.
\end{Thm}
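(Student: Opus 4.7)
The plan is to apply the circle method to the weighted count
\[
r(N) = \sum_{\substack{p_1 + p_2 + 2^{k_1} + \cdots + 2^{k_8} = N\\ X_1 < p_1, p_2 \leq N}} \log p_1 \log p_2 = \int_0^1 S(\alpha)^2 T(\alpha)^8 e(-N\alpha) \, d\alpha,
\]
where $T(\alpha) = \sum_{1 \leq k \leq L} e(2^k \alpha)$, $L = \lfloor \log_2 N \rfloor$, and $X = N$ in \eqref{eq:1.6}. With the dissection \eqref{eq:1.7}--\eqref{eq:1.8} at $P = X^{4/9 - \eta}$, split $r(N) = r_1(N) + r_2(N)$ into major and minor arc contributions. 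The goal is to prove $r_1(N) \gg \mathfrak{S}(N) N L^8$ and $r_2(N) = o(N L^8)$.

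For the major arcs, I would expand $T(\alpha)^8 = \sum_\mathbf{k} e(\alpha(2^{k_1} + \cdots + 2^{k_8}))$ and apply the explicit formula \eqref{eq:1.28} of the Main Lemma to each shifted Goldbach-type target $N_\mathbf{k} = N - 2^{k_1} - \cdots - 2^{k_8}$. The genuine main term $\sum_\mathbf{k} \mathfrak{S}(N_\mathbf{k}) I(N_\mathbf{k})$ is $\gg \mathfrak{S}(N) N L^8$ because every shift is even and $\mathfrak{S}(n) \gg 1$ for even $n$. The supplementary terms in \eqref{eq:1.28}, indexed by a bounded number of pairs $(\varrho_\nu, \varrho_\mu)$ of generalized exceptional zeros, take the form $\sum_\mathbf{k} \mathfrak{S}(\chi_\nu, \chi_\mu, N_\mathbf{k}) I(\varrho_\nu, \varrho_\mu, N_\mathbf{k})$ and are controlled via \eqref{eq:1.20}--\eqref{eq:1.22}. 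The crucial additional mechanism is that the $\mathbf{k}$-summation produces character cancellation in the singular series through the periodicity of $\{2^k \bmod q\}$ for the conductors of $\chi_\nu, \chi_\mu$; combined with the Deuring--Heilbronn phenomenon (Lemma~\ref{l:4.21}) ruling out multiple simultaneous exceptional zeros, the total supplementary contribution is $o(\mathfrak{S}(N) N L^8)$.

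For the minor arcs, the essential ingredient is a Ruzsa-type measure inequality for the exponential sum over powers of two: for every $\theta > 0$ there exists an explicit function $\kappa(\theta) > 0$ such that
\[
\mathfrak{E}_\theta := \{\alpha \in [0,1] : |T(\alpha)| \geq \theta L\} \quad \text{has measure} \quad \mu(\mathfrak{E}_\theta) \leq N^{-\kappa(\theta)}.
\]
Using this I would bound
\[
\int_\mathfrak{m} |S|^2 |T|^8 \, d\alpha \leq (\theta L)^8 \int_0^1 |S|^2 \, d\alpha + L^8 \int_{\mathfrak{m} \cap \mathfrak{E}_\theta} |S|^2 \, d\alpha.
\]
The first piece is $O(\theta^8 L^8 N \mathcal{L})$ by Parseval and is beaten by the main term for $\theta$ chosen as a suitable slowly decreasing function of $N$. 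The second piece is controlled by combining the Vinogradov minor-arc estimate $\sup_{\mathfrak{m}} |S(\alpha)| \ll X^{1-\delta}$ of Lemma~\ref{l:4.10} with the measure bound, yielding $O(L^8 X^{2-2\delta} N^{-\kappa(\theta)})$; for $\kappa(\theta) > 1 - 2\delta$ this is $o(N L^8)$.

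The hard part is pinning down Ruzsa's measure inequality with a quantitative $\kappa(\theta)$ that closes the balance for exactly $K = 8$. This is a delicate combinatorial-analytic statement about the sub-level sets of $T$, derived by a Cantor-set decomposition of $[0,1]$ according to binary digit patterns of $\alpha$ combined with sharp sumset estimates in $\mathbb{Z}/2^L\mathbb{Z}$. The explicit form of $\kappa(\theta)$ together with the minor-arc exponent $\delta$ available from the dissection $P = X^{4/9 - \eta}$ conspire to determine $K = 8$ as the minimum admissible value: any further reduction would require strengthening either Ruzsa's combinatorial inequality or the Vinogradov-type $L^\infty$ bound for $S(\alpha)$.
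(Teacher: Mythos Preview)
The paper does not contain a proof of this theorem. It is listed among the results that ``will also be based on the explicit formula, but their proof will require still many further ideas'' and is explicitly attributed to joint work with Ruzsa to appear elsewhere. There is therefore nothing in the present paper to compare your argument against; the only contribution of this paper towards Theorem~\ref{th:ujF} is the explicit formula of Theorem~\ref{t:1} and the Main Lemma on the generalized singular series, which you correctly identify as the major-arc input.

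Your outline is broadly in the right spirit --- circle method on $S^2 T^8$, major arcs via the explicit formula, minor arcs via a measure bound on $\{\alpha : |T(\alpha)| \geq \theta L\}$ --- but as written it is a sketch, not a proof, and several steps are genuinely delicate. On the major arcs, your claim that ``the $\mathbf{k}$-summation produces character cancellation in the singular series through the periodicity of $\{2^k \bmod q\}$'' is the heart of the matter and is far from automatic: one must show that the supplementary main terms coming from exceptional zeros are beaten by the genuine main term uniformly in the location of the Siegel zero, and this is exactly where the ``many further ideas'' alluded to in the paper enter. On the minor arcs, your treatment of the second piece $\int_{\mathfrak{m}\cap\mathfrak{E}_\theta}|S|^2$ via $\sup_{\mathfrak{m}}|S|^2 \cdot \mu(\mathfrak{E}_\theta)$ is too crude to reach $K=8$; the actual Pintz--Ruzsa argument requires a sharper quantitative form of the measure inequality together with a more careful interplay between the level $\theta$, the minor-arc savings, and an $L^2$ (not $L^\infty$) estimate for $S$ on $\mathfrak{E}_\theta$. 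The value $K=8$ emerges only after optimizing these constants precisely, and your proposal does not carry out that optimization.
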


The best published unconditional result is due to Heath-Brown and Puchta \cite{HP} with $13$ powers of two.

\begin{Thm}
\label{th:F}
For every $\varepsilon > 0$, all but $O_\varepsilon
(X^{3/5+\varepsilon})$ positive integers $m \leq X$ can be
written as a sum of at most three primes.
\end{Thm}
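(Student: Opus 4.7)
The plan is to reduce Theorem~\ref{th:F} via a parity argument to a ``twin Goldbach-exceptional'' bound, and then establish the latter using the explicit formula \eqref{eq:1.28} applied simultaneously at $m$ and at $m - 2$.

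First I would make the parity reduction. Vinogradov's three-primes theorem writes all sufficiently large odd $m$ as a sum of three primes, so only even $m$ need treatment. For even $m \geq 8$, any representation $m = p_1 + p_2 + p_3$ must contain exactly one summand equal to $2$ (since a sum of three odd primes is odd, and $2 + 2 + p$ is odd for any odd prime $p$), hence $m$ is a sum of at most three primes iff $m$ is Goldbach or $m - 2$ is Goldbach. Theorem~\ref{th:F} therefore reduces to
\[
\mathcal E_2(X) := |\{m \leq X : m \text{ even},\ R(m) = 0 = R(m-2)\}| \ll_\varepsilon X^{3/5+\varepsilon}.
\]

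The minor arcs are handled by a standard Parseval argument. With $P = X^{4/9-\eta}$ as in \eqref{eq:1.7}, Vinogradov's minor-arc estimate (Lemma~\ref{l:4.10}) gives $\sum_{m \leq X} |R_2(m)|^2 \ll X^3 \mathcal L^{-2A}/P$ for arbitrary $A$. Hence outside a set of size $\ll X^{5/9+\eta}$ one has $|R_2(m)|,\,|R_2(m-2)| = o(\mathfrak S(m) m)$. On this good set, $R(m) = R(m-2) = 0$ forces both $R_1(m)$ and $R_1(m-2)$ to be $o(\mathfrak S(m) m)$; by \eqref{eq:1.28} the supplementary main terms must almost exactly cancel the leading $\mathfrak S(m) I(m)$ at \emph{both} $m$ and $m - 2$.

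The heart of the argument is this joint cancellation on the major arcs, handled via the Main Lemma (Section~7). The supplementary terms \eqref{eq:1.24} involve a bounded number ($\leq K_0^2$) of pairs of generalized exceptional zeros $\varrho_\nu$ of $L(s,\chi_\nu)$, with $\chi_\nu$ primitive of conductor $r_\nu \leq P$, and for each such pair the map $m \mapsto \mathfrak S(\chi_\nu, \chi_\mu, m)$ is periodic of period dividing $[r_\nu, r_\mu] \leq P$. Cancellation at $m$ confines $m$ to a union of residue classes modulo $[r_\nu, r_\mu]$; cancellation also at $m - 2$ imposes a shifted compatibility condition on the same classes. The bounds \eqref{eq:1.20}--\eqref{eq:1.22}, the sharper form of the Main Lemma (bounding the sum of absolute values of the singular-series coefficients by $c|\mathfrak S(\chi_\nu, \chi_\mu, m)|$), and Deuring--Heilbronn (Lemma~\ref{l:4.21}) together imply that only $O(1)$ character pairs are ``active''. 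A Chinese-remainder analysis then restricts $m$ to a union of arithmetic progressions of total density $\ll P^{-1+\varepsilon}$, giving $\mathcal E_2(X) \ll X \cdot P^{-1+\varepsilon} \ll X^{3/5+\varepsilon}$.

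The principal obstacle is precisely this joint-cancellation step. Unlike the single-$m$ case, where the structure of the supplementary terms already localizes cancellation to a union of residue classes, one must rule out any conspiracy of the (bounded but non-trivial) family of supplementary main terms that cancels $\mathfrak S(m) I(m)$ at both $m$ and $m - 2$ on a set larger than $X^{3/5+\varepsilon}$. I anticipate this is where the ``many further ideas'' alluded to just before the statement enter: one likely needs a pair-correlation refinement of the Main Lemma, controlling how the finitely many active periodicities interlock under the additive shift by $2$, beyond what the single-variable bounds \eqref{eq:1.20}--\eqref{eq:1.22} provide.
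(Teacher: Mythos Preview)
The paper does not prove Theorem~\ref{th:F}; it explicitly defers the proof to a later installment, remarking that it ``will require still many further ideas'' beyond the explicit formula developed here. So there is no proof in the paper to compare against. Your parity reduction is correct and matches exactly the paper's own framing via the Descartes conjecture (see the paragraph before Theorem~\ref{th:D}): for even $m$, representability as a sum of at most three primes is equivalent to at least one of $m,\,m-2$ being Goldbach. One small slip: on the minor arcs with $P=X^{4/9-\eta}$ one has $P>X^{2/5}$, so the $X^{4/5}$ term in Lemma~\ref{l:4.10} dominates $X/\sqrt{P}$, and \eqref{eq:5.1} gives $\sum_m R_2(m)^2 \ll X^{13/5}\mathcal L^9$; the minor-arc exceptional set is therefore $\ll X^{3/5}\mathcal L^{10}$, not $X^{5/9+\eta}$ as you wrote. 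This is harmless for the target bound $X^{3/5+\varepsilon}$, but it does mean the minor arcs already saturate the exponent.

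The genuine gap is exactly where you locate it. Your sentence ``A Chinese-remainder analysis then restricts $m$ to a union of arithmetic progressions of total density $\ll P^{-1+\varepsilon}$'' is the entire theorem, and nothing in the present paper supplies it. The Main Lemma controls a \emph{single} $\mathfrak S(\chi_\nu,\chi_\mu,m)$ (and more precisely the ratio $\mathfrak S(\chi_\nu,\chi_\mu,m)/\mathfrak S(m)$, which is what is periodic mod $[r_\nu,r_\mu]$; $\mathfrak S(\chi_\nu,\chi_\mu,m)$ itself is not), but it says nothing about how the finitely many active pairs interact under the shift $m\mapsto m-2$. Even in the Siegel-zero case treated in Section~\ref{s:11}, the paper only obtains \eqref{eq:11.48} for the single-$m$ problem by throwing away $O(X^{1+9b}/P)$ values of $m$; getting the joint constraint at $m$ and $m-2$ to force density $\ll P^{-1}$ rather than merely $\ll 1$ is a different and harder statement. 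You have correctly isolated the obstacle, but the proposal as written is a restatement of the difficulty rather than a resolution of it.
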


\begin{Thm}
\label{th:H}
$E(X) < X^{3/4}$ for $X > C$.
\end{Thm}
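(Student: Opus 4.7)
The plan is to decompose $R(m) = R_1(m) + R_2(m)$ as in (\ref{eq:1.9})--(\ref{eq:1.10}) with a parameter $P = X^{4/9 - \eta}$ ($\eta > 0$ small and fixed) and show that $R(m) > 0$ for all but $O(X^{3/4})$ even $m \in [X/2, X]$; a dyadic argument over $[2, X]$ then yields the theorem.

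On the minor arcs I would combine Parseval with Vinogradov's bound (Lemma \ref{l:4.10}):
\[
\sum_{X/2 < m \leq X} |R_2(m)|^2 \ll \max_{\alpha \in \mathfrak{m}} |S(\alpha)|^2 \cdot \int_0^1 |S(\alpha)|^2\,d\alpha \ll \frac{X^3 \log^C X}{P}.
\]
Since $\mathfrak{S}(m) m \gg X/\log X$ uniformly for even $m \in [X/2, X]$, Chebyshev's inequality shows that $|R_2(m)| > \mathfrak{S}(m)m / \log^C X$ holds for at most $\ll X P^{-1} \log^{C'} X \ll X^{5/9 + \eta}$ integers $m$, well within our budget of $X^{3/4}$.

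The heart of the argument is the major-arc term $R_1(m)$, for which I apply the explicit formula (\ref{eq:1.28}). Since $K \leq K_0(H, U)$ is uniformly bounded, the sum has only $O(1)$ terms. The $(\nu,\mu) = (0,0)$ diagonal gives the genuine main term $\mathfrak{S}(m) I(m) \asymp \mathfrak{S}(m) m$; by (\ref{eq:1.21}) and (\ref{eq:1.23}), each off-diagonal contribution is bounded by $\mathfrak{S}(m)\,m\, U(\chi_\nu, \chi_\mu, m)^{-1/2}\log^2_2 U \cdot m^{-\delta_\nu - \delta_\mu}$, with $m^{-\delta_\nu - \delta_\mu} \gg e^{-2H}$ by (\ref{eq:1.25}). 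Choosing a sufficiently large threshold $V_0 = V_0(K_0, H)$, the only way $R_1(m)$ can fall below $\mathfrak{S}(m) m / \log^C X$ is if some pair of generalized exceptional characters satisfies $U(\chi_\nu, \chi_\mu, m) \leq V_0$. Inspecting (\ref{eq:1.22}), this forces $(|m|, r_\nu) \geq r_\nu / V_0$ and $(|m|, r_\mu) \geq r_\mu / V_0$, confining $m$ to multiples of a divisor of $r_\nu$ of size $\geq r_\nu / V_0$. The number of such $m \leq X$ is $\leq V_0 X / r_\nu$; for $r_\nu$ large (close to $P$) this is already $\ll X^{5/9 + \eta}$, while for $r_\nu$ small one extracts the needed saving from the generalized positivity bound (\ref{eq:1.18}), itself a consequence of the Deuring--Heilbronn repulsion (Lemma \ref{l:4.21}) applied to the cluster of exceptional zeros attached to $\chi_\nu$. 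Summing over the bounded family of exceptional pairs produces an exceptional set of size $O(X^{3/4})$.

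The main obstacle is the multi-character bookkeeping in the last step: unlike in \cite{MV}, the enlarged range $P \leq X^{4/9 - \eta}$ admits a bounded but non-trivial number of generalized exceptional zeros simultaneously, and one must verify that their joint effect on $R_1(m)$---through $\mathfrak{S}(\chi_\nu, \chi_\mu, m)$ for \emph{all} pairs $(\nu,\mu)$, not only the diagonal $\nu = \mu$---cannot conspire to wipe out the main term outside a set of the claimed size. This is precisely what is enabled by the decay $U^{-1/2}$ in (\ref{eq:1.21}) (rather than just the trivial bound (\ref{eq:1.20})), together with the fact, stated after (\ref{eq:1.22}), that the sum of absolute values of the elements of $\mathfrak{S}(\chi_1, \chi_2, m)$ is itself bounded by $c\,|\mathfrak{S}(\chi_1, \chi_2, m)|$ instead of the larger $c\,\mathfrak{S}(m)$ available in Lemma 5.5 of \cite{MV}.
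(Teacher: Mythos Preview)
The paper does not prove Theorem~\ref{th:H} here; it is announced in the introduction as one of the results that ``will also be based on the explicit formula, but their proof will require still many further ideas'', to appear in later parts of the series. So there is no proof in the paper against which to compare your proposal.

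Your sketch contains a genuine gap. The counting step asserts that if $U(\chi_\nu,\chi_\mu,m)\le V_0$ then $m$ lies among $\le V_0 X/r_\nu$ integers, and that this is $\ll X^{3/4}$ ``for $r_\nu$ large (close to $P$)'', while ``for $r_\nu$ small'' you invoke the positivity bound (\ref{eq:1.18}). But (\ref{eq:1.18}) is not a general statement about exceptional zeros of small conductor; it is the outcome of the Montgomery--Vaughan analysis in the specific case of a single real Siegel zero, where Deuring--Heilbronn (Lemma~\ref{l:4.21}) pushes \emph{every other} zero far from $\sigma=1$. In the generic situation described by (\ref{eq:1.28}) there may be several generalized exceptional zeros, none of them Siegel, and no positivity of the type (\ref{eq:1.18}) is available. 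Concretely, take a real exceptional character $\chi_1\bmod r_1$ with a zero at $1-\delta_1$ just outside the Siegel range (say $\delta_1$ slightly above $h/\mathcal L$). The zero-free region (Lemma~\ref{l:4.12}) only forces $r_1\gg X^{c_0/H}$, and since $H$ must be taken large to control the error $O_\varepsilon(\mathfrak S(m)Xe^{-c_\varepsilon H})$ in (\ref{eq:2.10}), the resulting bound $X/r_1\ll X^{1-c_0/H}$ is far larger than $X^{3/4}$. For every $m$ divisible by $r_1$ one has $U(\chi_1,\chi_1,m)=1$ (the first, second and fifth entries of (\ref{eq:1.22}) are $1$ since $\chi_1$ is real), so (\ref{eq:1.21}) gives no saving and the diagonal term $(\varrho_1,\varrho_1)$ can be of size comparable to the main term. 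Your argument offers no mechanism to show $R_1(m)>0$ for these $m$.

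This is precisely why the paper, from Theorems~\ref{t:1}--\ref{t:2} alone, only claims to recover Montgomery--Vaughan's $E(X)<X^{1-\delta}$ (see the remark after Theorem~\ref{t:3}), and obtains $X^{3/5+\varepsilon}$ only under the extra zero-free hypothesis (\ref{eq:2.16}) or in the presence of a genuine Siegel zero. The unconditional exponent $3/4$ is not reachable by a direct counting of bad $m$ from the explicit formula and the Main Lemma; further ideas are needed, as the paper itself says.
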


\section{Statement of results}
\label{s:2}
In order to formulate the explicit formula we need some more
notation. For any $\chi \mod q$ let
\begin{equation}
c_\chi(m) = \sum^q_{h=1} \chi(h)e\left({hm\over q}\right), \quad
\tau(\chi) = c_\chi(1).
\label{eq:2.1}
\end{equation}
Further for primitive characters $\chi_i \mod r_i$  ($r_i = 1$ is
possible), $r_i\mid q$ $(i=1,2)$ let
\begin{equation}
c(\chi_1, \chi_2, q, m) = \varphi^{-2}(q) c_{\chi_1 \chi_2 \chi_{0,q}} (-m)
\tau (\overline \chi_1 \chi_{0,q}) \tau (\overline \chi_2 \chi_{0,q}),
\label{eq:2.2}
\end{equation}
\begin{equation}
{\mathfrak S}(\chi_1, \chi_2, m) = \sum^\infty_{\substack{q = 1\\ [r_1, r_2]\mid q}}
c(\chi_1, \chi_2, q, m),
\label{eq:2.3}
\end{equation}
where $\chi_{0,q}$ is the principal character $\mod q$.
Let $\mathrm{cond}\, \chi$ denote the conductor of a character~$\chi$.

In case of the Generalized Twin-Prime Problem we need
\begin{equation}
c'(\chi_1, \chi_2, q, m) = \varphi^{-2}(q) c_{\chi_1 \overline \chi_2 \chi_{0,q}} (-m)
\tau (\overline \chi_1 \chi_{0,q}) \overline{\tau (\overline \chi_2 \chi_{0,q})},
\label{eq:2.4}
\end{equation}
\begin{equation}
{\mathfrak S}'(\chi_1, \chi_2, m) = \sum^\infty_{\substack{q = 1\\ [r_1, r_2]\mid q}}
c'(\chi_1, \chi_2, q, m),
\label{eq:2.5}
\end{equation}
\begin{equation}
R_1(m) = \int\limits_{\mathfrak M} S^2(\alpha) e(-m\alpha) d\alpha,
\quad
R'_1(m) = \int\limits_{\mathfrak M} |S^2(\alpha)| e(-m\alpha)d\alpha,
\label{eq:2.6}
\end{equation}
\begin{equation}
I(\varrho_1, \varrho_2, m) = \sum_{\substack{m=k+\ell\\ k,\ell\in(X_1, X]}}
 k^{\varrho_1-1} \ell^{\varrho_2-1},
\quad
I'(\varrho_1, \varrho_2, m) = \sum_{\substack{m=k - \ell\\ k,\ell\in (X_1, X]}}
 k^{\varrho_1-1} \ell^{\overline{\varrho_2}-1}.
\label{eq:2.7}
\end{equation}

Let us define the set $\mathcal E = \mathcal E(H,P,X)$ of
generalized exceptional singularities of the functions $L'/L$ for all primitive
$L$-functions $\mod r$, $r \leq P$, as follows $(\chi_0 = \chi_0(\mod
1)$ corresponds to $\zeta(s)$).
\begin{equation}
\aligned
(\varrho_0, \chi_0) &\in \mathcal E \ \text{ with } \varrho_0 = 1\\
(\varrho_\nu, \chi_\nu) &\in \mathcal E \ \text{ if }
\exists \chi_\nu \ (\nu \geq 0), \,\mathrm{cond}\, \chi_\nu = r_\nu \leq P, \quad
 L(\varrho_\nu, \chi_\nu) = 0, \\
\beta_\nu &\geq 1 - H/\mathcal L, |\gamma_\nu| \leq \sqrt{X},
\endaligned
\label{eq:2.8}
\end{equation}
where $H$ will be a sufficiently large constant to be chosen later.
We remark that the best known zero-free regions for $\zeta(s)$ exclude the possibility that $\zeta(s)$ would have additional exceptional singularities beyond $\varrho_0 = 1$ for sufficiently large values of~$X$.

Further let
\[
\mathcal E_T = \{ \varrho \in \mathcal E; \mid |\mathrm{Im}\varrho| \leq T\}.
\]
Let us consider a $P_0 \leq X^{4/9 - \eta_0}$ where $\eta_0$ is
any positive number. Every further constant or parameter, as
well as $\varepsilon_0$ in the definition of $X_1$ in (1.6) may
depend on $\eta_0$. We suppose that $X$ exceeds some effective constant
$X_0(\eta_0)$.

We can fix a sufficiently small $h = h_0$ (depending also on
$\eta_0$, and $c_1$ in (\ref{eq:4.14})) and introduce the

\begin{Def}
We call $\varrho_1 = 1 - \delta_1$, a real zero of $L(s, \chi_1)$ with a real character
$\chi_1$, a Siegel zero (with respect to $h$, $P$ and $X$) if
\begin{equation}
\delta_1 \leq h / \mathcal L, \quad \text{cond}\, \chi_1 \leq P.
\label{eq:2.9}
\end{equation}
\end{Def}

\begin{Rem}
If we have chosen $h = h_0$ small enough, then in view of
Lemma~\ref{l:4.13} we have at most one, simple Siegel zero
belonging to one primitive character $(h_0 \leq c_1 \mathcal
L/\log P)$.
\end{Rem}

\renewcommand{\theThm}{\arabic{Thm}}
\setcounter{Thm}{0}

With the notation of {\rm (\ref{eq:1.6})--(\ref{eq:1.8})},
{\rm (\ref{eq:1.26})--(\ref{eq:1.27})} and \eqref{eq:2.1}--\eqref{eq:2.9} we have

\begin{Thm}
\label{t:1}
For every $P_0 \leq X^{4/9 - \varepsilon}$ we can choose a $P
\in [P_0 X^{-\varepsilon}, P_0]$ with the following properties.
We have for all $m\leq X$ the explicit formulas
\begin{equation}
\aligned
R_1(m) &= \sum_{(\varrho_i, \chi_i) \in \mathcal E}
\sum_{(\varrho_j, \chi_j) \in
\mathcal E}
A(\varrho_i) A(\varrho_j) {\mathfrak S}(\chi_i, \chi_j, m) {\Gamma(\varrho_i)
\Gamma(\varrho_j) \over \Gamma(\varrho_i + \varrho_j)}
m^{\varrho_i  + \varrho_j - 1}\\
& \quad + O_\varepsilon({\mathfrak S}(m)Xe^{-c_\varepsilon H}) + O_\varepsilon (X^{1 -
\varepsilon_0}),
\endaligned
\label{eq:2.10}
\end{equation}
\begin{align}
R_1'(m) &= \sum_{(\varrho_i, \chi_i) \in \mathcal E}
\sum_{(\varrho_j,  \chi_j) \in
\mathcal E}
A(\varrho_i) A(\varrho_j) {\mathfrak S}'(\chi_i, \chi_j, m) I'(\varrho_i,
\varrho_j, m)\label{eq:2.11}\\
&\quad + O_\varepsilon ({\mathfrak S}(m) Xe^{-c_\varepsilon H}) + O_\varepsilon (X^{1-\varepsilon_0}).
\nonumber
\end{align}
Suppose additionally $m \in [X/4, X/2]$. Then, replacing the
summation condition {\rm (\ref{eq:2.10})--(\ref{eq:2.11})} by
\begin{equation}
\underset{\hspace*{5mm}[r_1, r_2] \leq P,\quad U(\chi_1, \chi_2, m) \leq U}{
\sum_{\substack{(\varrho_i, \chi_i ) \in \mathcal E\\
|\gamma_i| \leq U}} \sum_{\substack{(\varrho_j, \chi_j) \in \mathcal E\\
|\gamma_j| \leq U}}}
\label{eq:2.12}
\end{equation}
(in case of {\rm (\ref{eq:2.11})} $U(\chi_1, \chi_2, m)$ should be replaced
by $U(\chi_1, \overline \chi_2, m)$), we obtain
{\rm (\ref{eq:2.10})--(\ref{eq:2.11})} with an additional error term
\[
O({\mathfrak S}(m) X \log U/\sqrt{U}).
\]
\end{Thm}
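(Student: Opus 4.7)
\medskip
\noindent\textbf{Proof plan.}
The plan is to run the classical Hardy--Littlewood / Montgomery--Vaughan major arc analysis, but feed the sum over zeros of every primitive $L$-function of conductor $\leq P$ (not just the Siegel zero) through the same pipeline, and then collapse the resulting Ramanujan sums to the generalized singular series $\mathfrak S(\chi_i,\chi_j,m)$ of Section~2. Fix $q \leq P$, $(a,q)=1$, and on the arc $\mathfrak M(q,a)$ write $\alpha = a/q + \beta$. The standard orthogonality expansion gives
\[
S(a/q+\beta) \;=\; \frac{1}{\varphi(q)} \sum_{\chi \bmod q} \overline\chi(a)\,\tau(\chi)\, T(\overline\chi,\beta) + O(\log^2 X),
\qquad T(\chi,\beta) = \sum_{X_1<n\leq X} \Lambda(n)\chi(n) e(n\beta).
\]
Reducing each $\chi \bmod q$ to its primitive character $\chi^\ast \bmod r\mid q$ (so that $\tau(\chi)$ splits as $\mu(q/r)\chi^\ast(q/r)\tau(\chi^\ast)$) and applying the truncated explicit formula for $\psi(x,\chi^\ast)$, followed by partial summation against $e(u\beta)$, converts $T(\overline\chi^\ast,\beta)$ into a sum $\sum_{(\varrho,\chi^\ast)\in\mathcal E_{\sqrt X}} A(\varrho)\, J(\varrho,\beta)$ with $J(\varrho,\beta)=\int_{X_1}^{X} u^{\varrho-1}e(u\beta)\,du$, plus a controlled remainder from zeros outside $\mathcal E$ and from the zero-density tail beyond height $\sqrt X$.

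Next I would square, insert the factor $e(-m\alpha)$, and integrate in $\beta$ over $|\beta|\leq 1/(qQ)$. Extending the $\beta$-range to the whole line costs only $O(X^{1-\varepsilon_0})$ per arc in the aggregate, thanks to the oscillation of $J(\varrho_i,\beta)J(\varrho_j,\beta)$ for $|\beta|>1/(qQ)$, and after the exchange of summation and integration the $\beta$-integral collapses to $I(\varrho_i,\varrho_j,m)=\sum_{k+\ell=m} k^{\varrho_i-1}\ell^{\varrho_j-1}$, which by Lemma~\ref{l:4.9} equals $\Gamma(\varrho_i)\Gamma(\varrho_j)\Gamma(\varrho_i+\varrho_j)^{-1} m^{\varrho_i+\varrho_j-1}+O(X_1)$ as long as $|\gamma_\nu|\leq X^{1-\varepsilon_0}$. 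The remaining sum over $(a,q)=1$ produces precisely the Ramanujan sum $c_{\chi_i^\ast\chi_j^\ast\chi_{0,q}}(-m)$ paired with the Gauss factors $\tau(\overline{\chi_i^\ast}\chi_{0,q})\tau(\overline{\chi_j^\ast}\chi_{0,q})$, which is exactly $\varphi(q)^2 c(\chi_i^\ast,\chi_j^\ast,q,m)$. Summing over $[r_i,r_j]\mid q \leq P$ gives the truncated singular series; completing this series to $q=\infty$ costs only $O(\mathfrak S(m) X e^{-c_\varepsilon H})$ by the fast convergence proved in the Main Lemma~1 together with the bound (1.20).

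The error budget is controlled by three inputs: a log-free zero density estimate for primitive $L(s,\chi)$, $\mathrm{cond}\,\chi\leq P$, which throws away zeros with $\beta<1-H/\mathcal L$ at a cost $\ll \mathfrak S(m) X e^{-c_\varepsilon H}$; the Riemann--von Mangoldt-type tail bound beyond height $\sqrt X$, which gives $O_\varepsilon(X^{1-\varepsilon_0})$; and the truncation error in the explicit formula for $T(\chi^\ast,\beta)$ at height $\sqrt X$, which is uniform in $\beta$ up to a factor $\log^2 X$ and hence harmless after the $\beta$-integration. The flexibility of choosing $P$ in $[P_0 X^{-\varepsilon},P_0]$ is the standard pigeonhole trick to guarantee that no exceptional zero sits too close to the boundaries $\mathrm{cond}\,\chi=P$ or $|\gamma|=\sqrt X$, which would otherwise spoil the uniformity of the log-free density bound in the range $(P_0 X^{-\varepsilon},P_0]$. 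For $R_1'(m)$ the same argument runs verbatim with one factor of $S(\alpha)$ replaced by $\overline{S(\alpha)}$, producing $\overline{\chi_j^\ast}$ in place of $\chi_j^\ast$, $\overline{\tau(\overline{\chi_j^\ast}\chi_{0,q})}$ in place of $\tau(\overline{\chi_j^\ast}\chi_{0,q})$, and the sum $k-\ell=m$ (hence the integral $I'(\varrho_i,\varrho_j,m)$, which no longer has a clean closed Beta-function form, explaining why it is kept intact in \eqref{eq:2.11}).

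For the sharper version with the extra summation condition (2.12), I would truncate the inner double sum to pairs with $[r_1,r_2]\leq P$, $|\gamma_i|,|\gamma_j|\leq U$ and $U(\chi_1,\chi_2,m)\leq U$. The contribution of the discarded pairs is bounded by combining inequality (1.21), $|\mathfrak S(\chi_1,\chi_2,m)|\leq \mathfrak S(m) U^{-1/2}\log^2 U$, with the $\Gamma$-factor bound $|\Gamma(\varrho_i)\Gamma(\varrho_j)/\Gamma(\varrho_i+\varrho_j)|\ll |\gamma_i\gamma_j|^{-1/2}$ and the a priori bound $K\leq K_0(H,U)$ on the number of generalized exceptional zeros per character, giving the stated extra error $O(\mathfrak S(m) X\log U/\sqrt U)$; the restriction $m\in[X/4,X/2]$ ensures $m^{\varrho_i+\varrho_j-1}\ll X$. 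I expect the main technical obstacle to be precisely the uniform log-free zero density estimate required to earn a genuine $e^{-cH}$ saving (rather than $H^{-c}$) in the principal error term, together with the combinatorial bookkeeping that converts the product of truncated character sums into the generalized singular series $c(\chi_i^\ast,\chi_j^\ast,q,m)$ in exactly the form \eqref{eq:2.2}.
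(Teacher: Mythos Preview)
Your overall framework matches the paper's, but several load-bearing technical points are glossed over or misattributed.

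\textbf{The three-way split.} The paper does not split the zeros into ``in $\mathcal E$'' and ``not in $\mathcal E$''. It decomposes $S=S_1+S_2+S_3$ according to whether $\delta>b$, $H/\mathcal L<\delta\le b$, or $\delta\le H/\mathcal L$, with $b=b(\eta_0)$ a small fixed constant. The reason is that the log-free density theorem (Lemma~4.19) is only stated for $h<1/5$; for the deeper zeros ($\delta>b$) one must use ordinary density theorems (Lemmas~4.16--4.18), which carry logarithms but save a genuine power of $X$. That is Section~8, and it produces the error $O(X^{1-\varepsilon_0})$. A single log-free estimate covering all $\beta<1-H/\mathcal L$, as you describe, is not available.

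\textbf{Where the $\mathfrak S(m)e^{-cH}$ comes from.} You invoke the Main Lemma only to complete the $q$-sum from $q\le P$ to $q=\infty$. That step in fact contributes $O(X^{1-\varepsilon_0})$ (see (10.21)), not $O(\mathfrak S(m)Xe^{-cH})$. The decisive use of the Main Lemma is in the $S_2$-range (Section~9): there one cannot afford the logarithm that the Cauchy--Schwarz argument of (8.2) loses. Instead one bounds the double character sum directly by
\[
\sum_{\chi}\sum_{\chi'}\Bigl(\sum_{q}|c(\chi,\chi',q,m)|\Bigr)\,W_2(\chi)W_2(\chi'),
\]
and the Main Lemma's inequality $A(\chi_1,\chi_2,m)\le B\,\mathfrak S(m)$ factors this as $B\,\mathfrak S(m)\bigl(\sum_\chi W_2(\chi)\bigr)^2$. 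Only then does the log-free density theorem, integrated by parts in $\delta$ over $[H/\mathcal L,\,b]$, yield $e^{-cH}$; see (9.2)--(9.11). This is the source of the $\mathfrak S(m)$ in the first error term, and you do not mention it.

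\textbf{The pigeonhole and the integral extension.} The second condition in the paper's choice of $P$ (see (10.6)) creates a gap in $|\gamma_\nu|$ around $P/r_\nu$, \emph{not} around $\sqrt X$: either $|\gamma_\nu|\ge (P/r_\nu)X^{\varepsilon_0}$, and that zero is discarded by a direct $L^2$ bound (10.9), or $|\gamma_\nu|\le (P/r_\nu)X^{-4\varepsilon_0}$, which is exactly what is needed so that the Proposition of Section~10 (namely $T_{\varrho_\nu}(\eta)\ll|\eta|^{-1}$ for $|\eta|\ge|\gamma_\nu|/X_1$) applies on the full range $[1/qQ,1/2]$ and permits extending the $\eta$-integral to $[0,1]$. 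Your ``oscillation of $J(\varrho_i,\beta)J(\varrho_j,\beta)$'' is precisely this Proposition, but it rests on the pigeonhole gap you described incorrectly.

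Minor: Stirling gives $\Gamma(\varrho_i)\Gamma(\varrho_j)/\Gamma(\varrho_i+\varrho_j)\ll\bigl(\max(|\gamma_i|,|\gamma_j|)\bigr)^{-1/2}$, not $|\gamma_i\gamma_j|^{-1/2}$ (take $\gamma_i=\gamma_j$ of the same sign). It is this correct bound, together with (1.21), that drives the truncation to (2.12).
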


Formulae (\ref{eq:2.10}) and (\ref{eq:2.11}) are quite
satisfactory with respect to the error terms if there is no
Siegel zero (in this case one can choose $H$ and $U$ large
constants).
However, this is not the case if we have a Siegel zero.

The following theorem overcomes this difficulty.

Further, in case of \eqref{eq:2.13} we have for all but $O(X^{3/5 + \ve} + \varepsilon)$ values of $m \in [X/2, X]$: \
$R_1(m) \gg_{\varepsilon} m^{1 - \varepsilon}$, $R_1'(m)\gg_\varepsilon m^{1 - \varepsilon}$.

\begin{Thm}
\label{t:2}
Let $\varepsilon > 0$ be arbitrary. If $X > X(\varepsilon)$,
ineffective constant, and there exists a Siegel zero $\beta_1$
of $L(s, \chi_1)$ with
\begin{equation}
\beta_1 > 1 - h / \log X, \quad \mathrm{cond}\, \chi_1 \leq X^{4/9
- \varepsilon},
\label{eq:2.13}
\end{equation}
where $h$ is a sufficiently small constant, depending on
$\varepsilon$, then
\begin{equation}
E(X) < X^{3/5 + \varepsilon}
\label{eq:2.14}
\end{equation}
and, similarly
\begin{equation}
E'(X) = \big|\{m \leq X;\ 2 \mid m,\ m \neq p - p'\} \big| <
X^{3/5+\varepsilon} .
\label{eq:2.15}
\end{equation}
\end{Thm}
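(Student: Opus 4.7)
The plan is to deduce Theorem~2 from the explicit formula of Theorem~1 applied with $P$ close to the upper limit $X^{4/9-\varepsilon}$, combining the resulting major-arc identity with a standard minor-arc estimate via Parseval and Vinogradov. The scheme is to establish $R_1(m) \gg_\varepsilon m^{1-\varepsilon}$ (and its analogue $R'_1(m) \gg_\varepsilon m^{1-\varepsilon}$) uniformly in $m \in [X/2,X]$, and then to verify that $|R_2(m)| \leq m^{1-\varepsilon}/2$ outside at most $O(X^{3/5+\varepsilon})$ values of $m$.

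For the major arcs I would set $P_0 = X^{4/9-\varepsilon_1}$ with $\varepsilon_1$ small and apply Theorem~1 with constants $H, U$ chosen large in terms of $\varepsilon$. Under hypothesis (\ref{eq:2.13}) the Siegel zero $\varrho_1 = 1-\delta_1$ is simple, real, and attached to a unique primitive character $\chi_1$; choosing $h$ small, Lemma~\ref{l:4.21} pushes every other zero of every $L(s,\chi)$ with $\mathrm{cond}\,\chi \leq P$ and $|\gamma| \leq \sqrt X$ beyond $\sigma = 1 - H/\mathcal L$, so that $\mathcal E = \{(\varrho_0,\chi_0),(\varrho_1,\chi_1)\}$ and the double sum in (\ref{eq:2.10}) collapses to the four terms indexed by $(i,j) \in \{0,1\}^2$. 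When $U(\chi_i,\chi_j,m)$ in (\ref{eq:1.22}) is large, the bound (\ref{eq:1.21}) renders the secondary singular series negligible and $R_1(m) \sim \mathfrak S(m) m$; when $U$ is small, the Beta-integral identity (\ref{eq:1.23}) combined with the Montgomery--Vaughan estimate (\ref{eq:1.17})--(\ref{eq:1.18}), applied jointly to all four terms, yields $R_1(m) \gg \delta_1 \mathfrak S(m) m\log m$. Since Siegel's ineffective theorem gives $\delta_1 \gg_\varepsilon X^{-\varepsilon/2}$, the lower bound $R_1(m) \gg_\varepsilon X^{1-\varepsilon}$ is uniform in $m$.

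For the minor arcs, Parseval and the sup-norm estimate of Lemma~\ref{l:4.10} give
\[
\sum_{m \leq X}|R_2(m)|^2 \leq \int_{\mathfrak m}|S(\alpha)|^4\,d\alpha \leq \Bigl(\sup_{\mathfrak m}|S|\Bigr)^2 \int_0^1|S(\alpha)|^2\,d\alpha \ll \frac{X^3 \log X}{P},
\]
so $|R_2(m)| > X^{1-\varepsilon}$ for at most $\ll X^{1+2\varepsilon}\log X/P \ll X^{5/9+2\varepsilon+\varepsilon_1}\log X$ values of $m$; this is $\leq X^{3/5+\varepsilon}$ once $\varepsilon_1$ is chosen small enough. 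Combined with the uniform lower bound $R_1(m) \gg_\varepsilon X^{1-\varepsilon}$, this yields $R(m) = R_1(m) + R_2(m) > 0$ for every $m \in [X/2, X]$ outside the exceptional set, proving (\ref{eq:2.14}). The identical argument with $\mathfrak S', I', R'_1$ in place of the unprimed quantities establishes (\ref{eq:2.15}).

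The main obstacle I anticipate is the uniform lower bound on $R_1(m)$ on those $m$ where both secondary singular series $\mathfrak S(\chi_0,\chi_1,m)$ and $\mathfrak S(\chi_1,\chi_1,m)$ simultaneously attain moduli close to $\mathfrak S(m)$. Verifying that the four Gamma-factor contributions $\mathfrak S(m)m$, $\pm 2\mathfrak S(\chi_0,\chi_1,m)m^{\varrho_1}/\varrho_1$, and $\mathfrak S(\chi_1,\chi_1,m)\Gamma(\varrho_1)^2 m^{2\varrho_1-1}/\Gamma(2\varrho_1)$ cannot jointly cancel below the Montgomery--Vaughan threshold requires the refined structural information of the Main Lemma of Section~7, in particular the improvement over Lemma~5.5 of \cite{MV} that bounds the sum of absolute values of the terms of $\mathfrak S(\chi_1,\chi_2,m)$ by $O(|\mathfrak S(\chi_1,\chi_2,m)|)$ itself rather than by $\mathfrak S(m)$.
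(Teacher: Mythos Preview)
Your reduction of the major-arc sum to four terms is correct: under~(\ref{eq:2.13}) the Deuring--Heilbronn phenomenon (Lemma~\ref{l:4.21}, cf.\ (\ref{eq:11.2})--(\ref{eq:11.5})) does force every other zero out of the strip $\delta\le H/\mathcal L$, $|\gamma|\le\sqrt X$, provided $H\le H_0(h)=\tfrac45\log\tfrac{1}{2h}$. The minor-arc part is also fine and matches~(\ref{eq:5.3}).

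The gap is in the treatment of the error term $O_\varepsilon(\mathfrak S(m)Xe^{-c_\varepsilon H})$ in Theorem~\ref{t:1}. That error comes from the contribution of $S_2$ (the zeros with $H/\mathcal L<\delta\le b$) via the argument of Section~9, and the constant $c_\varepsilon$ there is \emph{small}: tracing (\ref{eq:9.10a})--(\ref{eq:9.11}) one finds $c_\varepsilon=c_3(b)-3\varepsilon'$ with $c_3(0)=1-(4/9-\eta_0)\cdot 9/4=(9/4)\eta_0$, so $c_\varepsilon$ is of order $\varepsilon$ when $P$ is close to $X^{4/9}$. Your lower bound from the four-term sum is $\asymp\delta_1\mathcal L\,\mathfrak S(m)m$ (cf.\ (\ref{eq:11.11})), while with $H$ pushed to its Deuring--Heilbronn ceiling $H\sim\tfrac45\log\tfrac{1}{\delta_1\mathcal L}$ the error is only $\asymp(\delta_1\mathcal L)^{4c_\varepsilon/5}\mathfrak S(m)m$. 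Since $4c_\varepsilon/5<1$, the error swamps the main term whenever $\delta_1\mathcal L$ is small---which is exactly the regime one must handle. Choosing $H$ large and $h$ small does not help: the constraint $H\le H_0(h)$ and the requirement $e^{-c_\varepsilon H}\ll h$ together force $c_\varepsilon\ge 5/4$, which is false.

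The paper therefore does \emph{not} invoke Theorem~\ref{t:1} as a black box in the Siegel-zero case. Section~11 goes back to $\int_{\mathfrak M}(S_2^2+2S_2S_3)$ and re-estimates it by inserting the Deuring--Heilbronn zero-free region~(\ref{eq:11.8}) as the lower limit of the $\delta$-integration in the density argument (see (\ref{eq:11.27}), (\ref{eq:11.36})--(\ref{eq:11.37})); combined with the refined density Lemma~\ref{l:4.20} this yields a bound of shape $G_1^{-1-\eta}$, which \emph{is} negligible against $\mathfrak S(m)m/G_1$. Moreover, even this sharper bound does not give a uniform lower bound for $R_1(m)$ at $\vartheta=4/9$: the paper explicitly notes (below (\ref{eq:11.12})) that uniformity is only available at $\vartheta=16/39$, and for $\vartheta=4/9$ one must discard a further exceptional set $\mathcal M$ of size $\ll X^{1+9b}/P$ coming from the major arcs themselves (Case~B, (\ref{eq:11.44})--(\ref{eq:11.48})). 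Your proposal omits both of these ingredients.
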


In view of the zero-free region for $L$-functions in
Lemma~\ref{l:4.12}, Theorems~\ref{t:1} and \ref{t:2} immediately
imply

\begin{Thm}
\label{t:3}
There are explicitly calculable positive constants $C_1, c_2,
C_3$ with the following property. If $L(s, \chi) \neq 0$
for
\begin{equation}
1 - {C_1 \over \log q} \leq \sigma \leq 1 - {c_2 \over \log q},
\quad |t| \leq C_3,
\label{eq:2.16}
\end{equation}
then the estimates {\rm (\ref{eq:2.14})--(\ref{eq:2.15})} hold
for every $\varepsilon > 0$ in case of $X > X'(\varepsilon)$.
\end{Thm}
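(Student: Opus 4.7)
The plan is to derive Theorem~\ref{t:3} by combining Lemma~\ref{l:4.12} with Theorems~\ref{t:1} and~\ref{t:2} via a dichotomy on whether some $L(s,\chi)$ has an exceptional (Siegel) zero. Hypothesis~\eqref{eq:2.16} strengthens the classical zero-free region of Lemma~\ref{l:4.12} by clearing the band $\sigma \in [1 - C_1/\log q,\, 1 - c_2/\log q]$ of zeros when $|t|\leq C_3$. Combined with Lemma~\ref{l:4.12}'s region for $|t|>C_3$, every zero of $L(s,\chi)$ with $|\gamma|\leq \sqrt{X}$ and $\chi$ primitive of conductor $q\leq P\leq X^{4/9-\varepsilon}$ must satisfy $\beta<1-c_2/\log q$, with the sole possible exception of a Siegel zero $\beta_1>1-c_2/\log q$ attached to a real character $\chi_1$ (the simplicity and uniqueness being the usual Deuring--Heilbronn phenomenon).

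The resulting dichotomy is: either (A) no such Siegel zero exists, or (B) one does. In Case~B the zero satisfies $1-\beta_1<c_2/\log q\leq (9c_2/4)/\mathcal{L}$, matching hypothesis~\eqref{eq:2.13} of Theorem~\ref{t:2} once $c_2$ has been calibrated at the outset so that $9c_2/4$ lies below the smallness threshold $h$ appearing there; Theorem~\ref{t:2} then delivers \eqref{eq:2.14}--\eqref{eq:2.15}. In Case~A I choose the parameter $H$ of~\eqref{eq:2.8} with $H/\mathcal{L}<c_2/\log q$ for every admissible $q$, e.g.\ $H=(9c_2/4)(1-\varepsilon)$, which is admissible because $\log q\leq (4/9-\varepsilon)\mathcal{L}$. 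Then $\mathcal{E}$ collapses to $\{(1,\chi_0)\}$ and formula~\eqref{eq:2.10} reduces to
\[
R_1(m) = \mathfrak{S}(m)\,m + O_\varepsilon\bigl(\mathfrak{S}(m) X e^{-c_\varepsilon H}\bigr) + O_\varepsilon(X^{1-\varepsilon_0}),
\]
so $R_1(m)\gg m^{1-\varepsilon}$ uniformly on even $m\in[X/2,X]$. Coupling this with the standard minor-arc $L^2$-bound (Parseval and Vinogradov's Lemma~\ref{l:4.10}, giving $|R_2(m)|<R_1(m)$ off a sparse set) forces $R(m)>0$ off an exceptional set of size $O(X^{1-\delta})$, far better than the required $X^{3/5+\varepsilon}$.

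The main obstacle I foresee is the ordering and calibration of the three constants: the threshold $h$ in~\eqref{eq:2.13} depends on $\varepsilon$, yet $c_2$ must be an absolute constant serving every $\varepsilon>0$ simultaneously. I would handle this by verifying that the proof of Theorem~\ref{t:2} actually goes through for any $h$ below an $\varepsilon$-independent threshold $h_0$, and then choosing $c_2<(4/9)h_0$; the value $C_1>c_2$ is then fixed to match the classical-region constant of Lemma~\ref{l:4.12}, and $C_3$ is taken so large that the tail $|t|>C_3$ is entirely governed by Lemma~\ref{l:4.12}. If the $\varepsilon$-dependence of $h(\varepsilon)$ cannot be eliminated, one runs the argument at a single small $\varepsilon_0$: the bound $E(X)<X^{3/5+\varepsilon_0}$ implies the claim for every $\varepsilon\geq \varepsilon_0$, and iterating in $\varepsilon_0$ with a slightly refined calibration covers the remainder. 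Apart from this bookkeeping, the deduction is immediate from Theorems~\ref{t:1} and~\ref{t:2}.
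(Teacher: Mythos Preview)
Your overall strategy matches the paper's: split according to whether a Siegel zero exists, apply Theorem~\ref{t:2} in Case~B, and in Case~A argue that $\mathcal E$ reduces to $\{(1,\chi_0)\}$ so that \eqref{eq:2.10} collapses to its main term. However, there is a genuine slip in your execution of Case~A that breaks the argument as written.

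You deduce only that non-exceptional zeros satisfy $\beta < 1 - c_2/\log q$, and then take $H \approx 9c_2/4$. But $c_2$ is necessarily \emph{small}: it must not exceed the classical constant $c_0$ of Lemma~\ref{l:4.12}, since that lemma is what forces any zero with $\sigma > 1 - c_2/\log q$ and $|t|\le C_3$ to be the real exceptional one. With $H$ correspondingly small, the error term $O_\varepsilon(\mathfrak S(m)\,X\,e^{-c_\varepsilon H})$ in \eqref{eq:2.10} is of the same order as the main term $\mathfrak S(m)\,m$, and you cannot conclude $R_1(m)\gg m^{1-\varepsilon}$.

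The whole point of the hypothesis~\eqref{eq:2.16} is to widen the zero-free strip from width $c_2/\log q$ to width $C_1/\log q$. In Case~A, combining the absence of a Siegel zero with \eqref{eq:2.16} yields $\beta < 1 - C_1/\log q$ for every zero with $|t|\le C_3$; hence you may take $H$ as large as roughly $C_1/(4/9-\varepsilon)$. Since $C_1$ is at your disposal (the theorem only asserts the \emph{existence} of suitable constants), you choose $C_1$ large enough that $e^{-c_\varepsilon H}$ is negligible. This is precisely what the paper does: it takes $C_1 = H$ and $C_3 = U$, both sufficiently large. With this correction your argument goes through, and your closing worry about calibrating $c_2$ against $h(\varepsilon)$ becomes secondary --- the essential calibration is between $C_1$ and the size of $H$ required to suppress the error in Theorem~\ref{t:1}.
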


The reason for the implication is the following.
If there exists a zero with $\sigma \geq 1 - c_2/\log q$, $|t| \leq C_3$, $q \leq X^{4/9}$, then by Lemma~\ref{l:4.12} this has to be a Siegel zero.
Consequently, \eqref{eq:2.15} follows from Theorem~\ref{t:2}.
If, on the other hand, the whole range $1 - C_1 / \log q \leq \sigma \leq 1$, $|t| \leq c_3$, $q \leq X^{4/9}$ is zero-free, then the crucial sums in \eqref{eq:2.10}--\eqref{eq:2.12} contain only the main term if the constants $C_1 = H$, $C_3 = U$ were chosen sufficiently large.

In comparison we note that under the assumption of the
Generalized Riemann Hypothesis (in place of the much weaker
condition (\ref{eq:2.16})) Hardy--Littlewood \cite{HL} proved in 1924
the estimate $E(X) \ll X^{1/2 + \varepsilon}$.

We remark further that one can show that Theorems~\ref{t:1} and \ref{t:2} also
imply Montgomery--Vaughan's estimate (\ref{eq:1.4}).

\section{Notation}
\label{s:3}

Beyond the notation of Sections~{1} and \ref{s:2} (cf.\
(\ref{eq:1.6})--(\ref{eq:1.12}), (\ref{eq:1.17}),
(\ref{eq:1.22}), (\ref{eq:1.23}), (\ref{eq:1.25}),
(\ref{eq:1.26})--(\ref{eq:1.27}), (\ref{eq:2.6}), (\ref{eq:2.8}),
(\ref{eq:2.15})) we will use the following notation. The symbol
$\varrho = \varrho_\chi$ will denote a zero or a pole of
$L(s,\chi)$, where $\chi$ will denote mostly primitive
characters. Let
\begin{equation}
\varrho = \beta + i \gamma = 1 - \delta + i\gamma,
\label{eq:3.1}
\end{equation}
\begin{equation}
N(\alpha, T, \chi) = \sum_{\substack{\varrho = \varrho_\chi \\ \beta \geq
\alpha, |\gamma| \leq T}} 1
\label{eq:3.2}
\end{equation}
\begin{equation}
N^*(\alpha, T, Q) = \sum_{q \leq Q} \underset{\chi(q)}{\sum\nolimits^*}
N(\alpha, T, \chi),
\label{eq:3.3}
\end{equation}
where $\underset{\chi(q)}{\sum\nolimits^*}$ means a summation over
primitive characters $\mod q$. Further
$\underset{a(q)}{\sum\nolimits^\prime}$ will denote summation
over all reduced residue classes. Let
\begin{equation}
T(\varrho, \eta) = \sum_{X_1 < n \leq X} n^{\varrho-1} e(n\eta).
\label{eq:3.4}
\end{equation}
Further, $r \sim R$ will denote $R \leq r < 2R$.

\section{Auxiliary results}
\label{s:4}

The following arithmetic results appear as Lemmas 5.1--5.4 of \cite{MV}.

\begin{Lem}
\label{l:4.1}
If $\chi$ is a primitive character $(\mod q)$ then $|\tau(\chi)| =
q^{1/2}$.
\end{Lem}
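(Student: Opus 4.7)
The plan is to prove the standard Gauss sum identity $|\tau(\chi)|^2 = q$ by the classical two-step argument: first establish a twisted evaluation of the additive sum $c_\chi(m)$ using primitivity, then compute $\sum_m |c_\chi(m)|^2$ in two different ways.

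First I would show the key identity
\[
c_\chi(m) = \sum_{h=1}^q \chi(h)\, e\!\left(\tfrac{hm}{q}\right) = \overline{\chi}(m)\, \tau(\chi) \qquad \text{for every } m,
\]
where $\chi$ is primitive mod $q$. When $(m,q)=1$ this is an immediate consequence of the substitution $h \mapsto h\,\overline{m}\pmod{q}$, combined with the multiplicativity of $\chi$. When $(m,q) > 1$ the right-hand side vanishes because $\chi$ is zero on non-units; the content of the claim is then that the left-hand side also vanishes, and this is precisely where primitivity is needed. Writing $q = dq_1$ with $d = (m,q) > 1$ and $m = dm_1$, one groups the $h$'s mod $q_1$: the inner sum becomes $\sum_{t=1}^{d} \chi(h + tq_1)$, and primitivity of $\chi$ mod $q$ (as opposed to being induced from a character mod a proper divisor of $q$) forces this inner sum to be zero. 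This step is the main obstacle, since it is the only place the primitivity hypothesis genuinely enters.

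Given this identity, I would then evaluate $\Sigma := \sum_{m=1}^{q} |c_\chi(m)|^2$ in two ways. On the one hand, using $c_\chi(m) = \overline\chi(m) \tau(\chi)$ and $|\chi(m)|^2 = 1$ exactly when $(m,q)=1$,
\[
\Sigma = |\tau(\chi)|^2 \sum_{m=1}^q |\chi(m)|^2 = \varphi(q)\, |\tau(\chi)|^2.
\]
On the other hand, expanding the square and exchanging the order of summation,
\[
\Sigma = \sum_{h,k=1}^{q} \chi(h)\,\overline{\chi(k)} \sum_{m=1}^q e\!\left(\tfrac{(h-k)m}{q}\right) = q \sum_{\substack{h,k=1 \\ h\equiv k\, (q)}}^{q} \chi(h)\,\overline{\chi(k)} = q \sum_{h=1}^{q} |\chi(h)|^2 = q\, \varphi(q),
\]
by orthogonality of additive characters mod $q$. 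Comparing the two expressions yields $|\tau(\chi)|^2 = q$, and taking square roots completes the proof.
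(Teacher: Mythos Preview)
Your proof is correct and is the standard textbook argument for the modulus of a primitive Gauss sum. Note that the paper itself does not prove this lemma: it is stated together with Lemmas~\ref{l:4.2}--\ref{l:4.4} as a quotation of Lemmas~5.1--5.4 of Montgomery--Vaughan \cite{MV}, with no proof supplied. So there is nothing to compare against beyond observing that your argument is exactly the classical one underlying that citation.

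One minor remark on presentation: in the step where $(m,q)>1$ and you argue that the inner sum $\sum_{t=1}^{d}\chi(h+tq_1)$ vanishes, it is worth making explicit that primitivity guarantees the existence of some $a\equiv 1\pmod{q_1}$ with $(a,q)=1$ and $\chi(a)\neq 1$, and that multiplication by $a$ permutes the residues $h+tq_1$ modulo $q$ within a fixed class modulo $q_1$, forcing the sum to be zero. You have the right idea; just be sure the quantifiers are clean if you write it out in full.
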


\begin{Lem}
\label{l:4.2}
Let $\chi$ be a character $(\mod k)$, induced by a primitive
character $\chi^*(\mod r)$. Then $r\mid k$ and
\begin{equation}
\tau(\chi) = \mu \left({k \over r} \right) \chi^* \left({k\over
r}\right) \tau(\chi^*).
\label{eq:4.1}
\end{equation}
\end{Lem}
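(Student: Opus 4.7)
The first claim $r\mid k$ is immediate from the definition of an induced character. For the Gauss sum identity, my plan is to exploit that $\chi(h)=\chi^\ast(h)$ when $(h,k)=1$ and $\chi(h)=0$ otherwise, rewrite the defining sum in terms of $\chi^\ast$, and then peel off the coprimality condition by M\"obius inversion. Writing $k=rm$, note that a prime $p$ divides $k$ iff it divides $r$ or $m$, so $(h,k)=1$ is equivalent to $(h,r)=1$ and $(h,m)=1$; the first is already forced by $\chi^\ast(h)\neq 0$, so only $(h,m)=1$ needs to be removed.

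Explicitly, I would start from
\[
\tau(\chi)=\sum_{h=1}^{k}\chi^\ast(h)\,e(h/k)\cdot\mathbf 1[(h,m)=1]
=\sum_{d\mid m}\mu(d)\sum_{\substack{h=1\\ d\mid h}}^{k}\chi^\ast(h)\,e(h/k).
\]
Substituting $h=dh'$, the inner sum becomes
\[
\chi^\ast(d)\sum_{h'=1}^{k/d}\chi^\ast(h')\,e\!\left(h'/(k/d)\right),
\]
using complete multiplicativity of $\chi^\ast$ (with the usual convention that it vanishes on integers not coprime to $r$).

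The key step is to evaluate the sum $\Sigma(M):=\sum_{h'=1}^{M}\chi^\ast(h')e(h'/M)$ for each $M=k/d$ with $r\mid M$ (which holds because $d\mid m$). Writing $M=rM'$ and splitting $h'$ by its residue $h_0\bmod r$, factor
\[
\Sigma(M)=\sum_{h_0=1}^{r}\chi^\ast(h_0)\,e\!\left(h_0/(rM')\right)\sum_{j=0}^{M'-1}e(j/M').
\]
The geometric sum over $j$ vanishes unless $M'=1$, i.e.\ $M=r$, in which case $\Sigma(r)=\tau(\chi^\ast)$. Hence only the term $d=m$ survives in the M\"obius sum, yielding
\[
\tau(\chi)=\mu(m)\,\chi^\ast(m)\,\tau(\chi^\ast),
\]
as claimed. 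The whole argument is standard and entirely formal; the only step requiring care is the bookkeeping of the coprimality conditions (making sure the equivalence $(h,k)=1\iff (h,r)=1\,\&\,(h,m)=1$ is used correctly when $r$ and $m$ share common factors), but no genuine obstacle arises.
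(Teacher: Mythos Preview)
Your argument is correct and is the standard proof of this identity. Note, however, that the paper does not actually prove Lemma~\ref{l:4.2}: it simply cites it, together with Lemmas~\ref{l:4.1}, \ref{l:4.3} and~\ref{l:4.4}, as Lemmas~5.1--5.4 of Montgomery--Vaughan \cite{MV}. So there is no ``paper's own proof'' to compare against; your write-up is exactly the kind of direct verification one would supply if asked to fill in the reference.

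One small remark on the point you flag at the end. The equivalence $(h,k)=1 \iff (h,r)=1 \text{ and } (h,m)=1$ holds unconditionally, since $r\mid k$ and $m\mid k$, while every prime divisor of $k=rm$ divides $r$ or $m$; shared prime factors between $r$ and $m$ cause no trouble here. Where such shared factors do enter is in the final M\"obius step: if $(m,r)>1$ then $\chi^\ast(m)=0$, and the formula correctly returns $\tau(\chi)=0$. Your geometric-sum computation already yields this automatically (only $d=m$ survives, and the prefactor $\chi^\ast(m)$ vanishes), so the ``bookkeeping'' concern is harmless.
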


\begin{Lem}
\label{l:4.3}
Suppose the above hypotheses hold, and that $(m,k) = 1$. Then
\begin{equation}
c_\chi(m) = \overline{\chi^*}(m) \mu \left({k \over r} \right) \chi^* \left({k\over
r}\right) \tau(\chi^*).
\label{eq:4.2}
\end{equation}
\end{Lem}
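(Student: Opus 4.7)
The plan is to reduce the claim to Lemma~\ref{l:4.2} (the evaluation of $\tau(\chi)$) by a change of summation variable that pulls a factor of $\overline{\chi^*}(m)$ out of the Gauss-type sum $c_\chi(m)$. This is the standard reduction and the main point is that the coprimality hypothesis $(m,k)=1$ lets us multiply the summation variable by $\overline{m}$ (the inverse of $m$ modulo $k$) without altering the underlying range.

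First I would expand the definition in \eqref{eq:2.1}. Because $\chi\pmod k$ is induced by $\chi^*\pmod r$, we have $\chi(h)=\chi^*(h)$ when $(h,k)=1$ and $\chi(h)=0$ otherwise, so
\[
c_\chi(m) \;=\; \sum_{\substack{h=1\\(h,k)=1}}^{k} \chi^*(h)\,e\!\left(\frac{hm}{k}\right).
\]
Next, using $(m,k)=1$, I would substitute $h\equiv \overline{m}\,n\pmod k$, where $\overline{m}$ denotes the inverse of $m$ modulo $k$. As $h$ ranges over reduced residues modulo $k$, so does $n$, and the character value becomes $\chi^*(\overline{m})\chi^*(n)=\overline{\chi^*}(m)\chi^*(n)$ (using that $\chi^*$ is defined on residues coprime to $r$, and $(n,k)=1$ implies $(n,r)=1$). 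The exponential becomes $e(n/k)$. Pulling $\overline{\chi^*}(m)$ out of the sum gives
\[
c_\chi(m) \;=\; \overline{\chi^*}(m)\sum_{\substack{n=1\\(n,k)=1}}^{k} \chi^*(n)\,e\!\left(\frac{n}{k}\right) \;=\; \overline{\chi^*}(m)\,\tau(\chi),
\]
since the remaining sum is precisely $c_\chi(1)=\tau(\chi)$.

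Finally I would invoke Lemma~\ref{l:4.2} to rewrite $\tau(\chi)=\mu(k/r)\chi^*(k/r)\tau(\chi^*)$, which yields \eqref{eq:4.2} after substitution.

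There is essentially no hard step here; the only subtlety worth checking carefully is that the substitution $h\mapsto \overline{m}n$ is legitimate, i.e.\ that the range $\{h:(h,k)=1\}$ is mapped bijectively to itself. This follows because $(\overline{m},k)=1$, so $(n,k)=(\overline{m}n,k)=(h,k)$; and once this is in place, the rest is bookkeeping together with the application of Lemma~\ref{l:4.2}.
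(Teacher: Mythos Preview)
Your proof is correct and is the standard argument. The paper does not give its own proof of this lemma; it simply cites it as one of Lemmas~5.1--5.4 of \cite{MV}, so there is nothing to compare against beyond noting that your derivation is the usual one.
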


\begin{Lem}
\label{l:4.4}
Let $\chi$ be a character $(\mod q)$, induced by a primitive
character $\chi^*(\mod r)$. For an arbitrary integer $m$ put $q_1 =
q / (q, |m|)$. If $r \nmid q_1$ then $c_\chi(m) = 0$. If
$r \mid q_1$ then
\begin{equation}
c_\chi (m) = \chi^*\left({m\over (q, |m|)} \right) {\varphi(q) \over
\varphi(q_1)} \mu \left({q_1 \over r} \right) \chi^* \left({q_1
\over r} \right) \tau (\chi^*).
\label{eq:4.3}
\end{equation}
\end{Lem}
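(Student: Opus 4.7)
The plan is to reduce Lemma~\ref{l:4.4} to Lemma~\ref{l:4.3} by a change of variables that isolates $d := (q,|m|)$. Set $q_1 = q/d$, $m_1 = m/d$, so $(m_1,q_1)=1$. Since $e(hm/q) = e(hm_1/q_1)$ depends only on $h$ modulo $q_1$, I write $h = a + q_1 b$ with $1 \le a \le q_1$, $0 \le b \le d-1$, giving
\begin{equation*}
c_\chi(m) = \sum_{a=1}^{q_1} e(am_1/q_1)\, T(a),\qquad T(a) = \sum_{b=0}^{d-1}\chi(a + q_1 b).
\end{equation*}
The inner sum $T(a)$ is a sum of $\chi$ over the coset $a + q_1\mathbb Z/q\mathbb Z$; by the Chinese Remainder Theorem it factors as $T(a) = \prod_{p\mid q} T_p(a)$, where $T_p(a)$ sums $\chi^{(p)}$ over residues $n$ modulo $p^{v_p(q)}$ with $n \equiv a \pmod{p^{v_p(q_1)}}$, and $\chi^{(p)}$ is the local $p$-part of $\chi$, induced from a primitive $\chi^{*(p)}$ modulo $p^{v_p(r)}$.

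A case analysis at each prime produces the dichotomy. If $v_p(r) \le v_p(q_1)$, then $\chi^{*(p)}$ is constant on the coset, and a direct count (handling separately the sub-cases $v_p(q_1)=0$ and $v_p(q_1)\ge 1$) yields $T_p(a) = \chi^{*(p)}(a)\,\varphi(p^{v_p(q)})/\varphi(p^{v_p(q_1)})$ whenever $(a,p^{v_p(q_1)}) = 1$, and zero otherwise. If instead $v_p(r) > v_p(q_1)$, writing $n = a + p^{v_p(q_1)} s$ and factoring out $\chi^{*(p)}(a)$ reduces the local sum to a sum of $\chi^{*(p)}$ over the subgroup $H = \{1 + p^{v_p(q_1)} u\} \subseteq (\mathbb Z/p^{v_p(r)}\mathbb Z)^\times$; by primitivity, the restriction $\chi^{*(p)}|_H$ is non-trivial (else $\chi^{*(p)}$ would factor through $(\mathbb Z/p^{v_p(q_1)}\mathbb Z)^\times$), so this sum vanishes and $T_p(a) = 0$.

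Multiplying over primes delivers the two cases of the lemma. If $r \nmid q_1$, some prime $p$ satisfies $v_p(r) > v_p(q_1)$, so $T(a) \equiv 0$ and $c_\chi(m) = 0$. If $r \mid q_1$, the first sub-case holds at every prime; combining the local formulae using $\chi^*(a) = \prod_p \chi^{*(p)}(a)$ and $\varphi(q)/\varphi(q_1) = \prod_p \varphi(p^{v_p(q)})/\varphi(p^{v_p(q_1)})$ gives $T(a) = \chi^*(a)\,\varphi(q)/\varphi(q_1)$ for $(a,q_1) = 1$. Substituting back,
\begin{equation*}
c_\chi(m) = \frac{\varphi(q)}{\varphi(q_1)}\sum_{\substack{a=1 \\ (a,q_1)=1}}^{q_1}\chi^*(a)\, e(am_1/q_1) = \frac{\varphi(q)}{\varphi(q_1)}\,c_{\chi^{(q_1)}}(m_1),
\end{equation*}
where $\chi^{(q_1)}$ is the character modulo $q_1$ induced by $\chi^*$. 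Since $(m_1,q_1) = 1$, Lemma~\ref{l:4.3} applied with $k = q_1$ evaluates the last Ramanujan sum and produces the claimed formula.

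The main obstacle is the primitivity step in the second local case: one has to identify the subgroup $H$ and verify that $\chi^{*(p)}|_H$ is non-trivial, which is precisely the content of primitivity of $\chi^{*(p)}$ at level $p^{v_p(r)}$ (with a small adjustment for $p=2$ since $(\mathbb Z/2^k\mathbb Z)^\times$ is not cyclic). Once this prime-power vanishing is secured, the remainder of the proof is the combinatorial bookkeeping above and a direct invocation of Lemma~\ref{l:4.3}.
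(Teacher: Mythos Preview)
Your argument is correct. The paper does not actually prove Lemma~\ref{l:4.4}; it merely quotes it (together with Lemmas~\ref{l:4.1}--\ref{l:4.3}) from Montgomery--Vaughan \cite{MV}, so there is no ``paper's own proof'' to compare against beyond that citation. Your coset decomposition $h=a+q_1b$, the CRT factorisation of $T(a)$, and the prime-power vanishing via primitivity are all sound, and the reduction to Lemma~\ref{l:4.3} at the end is exactly the intended mechanism. Two small remarks: the caveat about $p=2$ is unnecessary, since the argument that $\chi^{*(p)}$ trivial on $H=1+p^{v_p(q_1)}\mathbb Z$ would force it to factor through $(\mathbb Z/p^{v_p(q_1)}\mathbb Z)^\times$ uses only the kernel of the reduction map, not cyclicity; and your invocation of Lemma~\ref{l:4.3} produces $\overline{\chi^*}(m/(q,|m|))$ rather than $\chi^*(m/(q,|m|))$ as printed in the lemma --- this is a typo in the paper's statement, and the $\overline{\chi^*}$ version is what is actually used downstream (see e.g.\ \eqref{eq:7.13}).
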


We will use the following (mostly) well-known results from the
theory of exponential sums

\begin{Lem}
\label{l:4.5}
Let $F(x)$ be a real differentiable function such that $F'(x)$
is monotonic and $F'(x) \geq m > 0$, or $F'(x) \leq - m < 0$, in
$(a,b)$. Then
\begin{equation}
\bigg| \int\limits^b_a e^{iF(x)} dx \bigg| \leq {4 \over m}.
\label{eq:4.4}
\end{equation}
\end{Lem}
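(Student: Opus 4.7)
The plan is to prove this classical first-derivative estimate by integration by parts, writing $e^{iF(x)} = \frac{1}{iF'(x)}\cdot\frac{d}{dx}e^{iF(x)}$ — which is legitimate because $|F'(x)|\geq m>0$ ensures $1/F'$ is bounded. Thus
\[
\int_a^b e^{iF(x)}\,dx \;=\; \frac{1}{i}\left[\frac{e^{iF(x)}}{F'(x)}\right]_a^b \;-\; \frac{1}{i}\int_a^b e^{iF(x)}\,d\!\left(\frac{1}{F'(x)}\right).
\]
The integration by parts is valid even without assuming $F''$ exists, because the monotonicity of $F'$ (together with the sign condition) forces $1/F'$ to be itself monotonic on $(a,b)$, hence of bounded variation, so the Riemann--Stieltjes integral on the right is well-defined.

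Next I would estimate the two pieces separately. Since $|F'(x)|\geq m$ on $(a,b)$, the boundary term is bounded by
\[
\left|\frac{e^{iF(b)}}{F'(b)}\right|+\left|\frac{e^{iF(a)}}{F'(a)}\right|\;\leq\;\frac{2}{m}.
\]
For the remaining integral, the monotonicity of $1/F'(x)$ makes the signed measure $d(1/F'(x))$ of one sign on $(a,b)$, so the $L^1$-norm of this measure equals the total variation, namely $\bigl|1/F'(b)-1/F'(a)\bigr|$. Bounding each term in absolute value by $1/m$ yields a further contribution of at most $2/m$. Adding the two bounds gives the claimed constant $4/m$.

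The only delicate point — and the one I would single out as the main obstacle — is justifying the integration by parts under the weak hypothesis that $F'$ is merely monotonic, with no explicit assumption of second differentiability. This is handled by interpreting the resulting integral in the Riemann--Stieltjes sense against the monotonic (hence BV) integrator $1/F'$, at which point the continuous bounded integrand $e^{iF(x)}$ poses no difficulty. Everything else is routine estimation, and the same argument applied to the real and imaginary parts separately would work equally well via the second mean value theorem if one preferred to avoid complex Stieltjes integrals entirely.
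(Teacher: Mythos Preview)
Your argument is correct. Note, however, that the paper does not actually prove this lemma: it simply cites it as Lemma~4.2 of Titchmarsh. Your integration-by-parts proof is precisely the classical one; Titchmarsh's own presentation applies the second mean value theorem to the real and imaginary parts separately (the alternative you mention at the end), which is formally equivalent to your Riemann--Stieltjes formulation. One minor remark: since $1/F'(a)$ and $1/F'(b)$ lie in $(0,1/m]$ and have the same sign, the total variation $\bigl|1/F'(b)-1/F'(a)\bigr|$ is in fact at most $1/m$, not $2/m$, so your argument actually yields $3/m$; the coarser bound $4/m$ you state is of course still valid and matches the lemma.
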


This is Lemma 4.2 of Titchmarsh \cite{Tit}.

\begin{Lem}
\label{l:4.6}
Let $f(x)$ be a real differentiable function in $(a,b)$, $f'(x)$
monotonic, $|f'(x)| \leq \theta < 1$. Then
\begin{equation}
\sum_{a < n\leq b} e(f(n)) = \int\limits^b_a e(f(x)) dx + O(1).
\label{eq:4.5}
\end{equation}
\end{Lem}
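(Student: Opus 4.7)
The plan is to reduce via Euler's summation formula to an integral involving the sawtooth $\psi(x) = \{x\} - \tfrac12$, Fourier-expand $\psi$, and estimate the resulting exponential integrals by an integration by parts combined with Lemma~\ref{l:4.5}. Specifically, I would first write
\[
\sum_{a < n \leq b} e(f(n)) = \int_a^b e(f(x))\,dx - 2\pi i \int_a^b \psi(x) f'(x) e(f(x))\,dx + O(1),
\]
where the $O(1)$ absorbs the two boundary contributions (bounded since $|e(f)| = 1$). The task reduces to showing that the remainder integral is $O(1)$.

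Next, I would substitute the Fourier expansion $\psi(x) = -\frac{1}{2\pi i}\sum_{k \neq 0}\frac{e(kx)}{k}$ (valid off the integers; the interchange with the outer integral being justified by truncation) to reduce the task to bounding $\sum_{k\neq 0} |J_k|/|k|$, where $J_k := \int_a^b f'(x)\, e(f(x) + kx)\, dx$. For each $k \neq 0$, I would rewrite $f'(x) = \frac{f'(x)}{f'(x)+k}(f'(x)+k)$ and use $(f'(x)+k) e(f(x)+kx) = \frac{1}{2\pi i}\frac{d}{dx} e(f(x)+kx)$, then integrate by parts in the Stieltjes sense:
\[
J_k = \left[\frac{f'(x)\, e(f(x)+kx)}{2\pi i(f'(x)+k)}\right]_a^b - \frac{1}{2\pi i} \int_a^b e(f(x)+kx)\, d\!\left(\frac{f'(x)}{f'(x)+k}\right).
\]
Since $f'$ is monotonic and $|f'(x)| \leq \theta < 1$, the function $\frac{f'(x)}{f'(x)+k} = 1 - \frac{k}{f'(x)+k}$ is itself monotonic in $x$ for $|k|\geq 1$, so its total variation over $[a,b]$ equals the absolute difference of its endpoint values, which is at most $\frac{2|k|\theta}{(|k|-\theta)^2}$. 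Together with a boundary term of size $O(\theta/(|k|-\theta))$, this yields $|J_k| = O(\theta/|k|)$ for $|k|\geq 2$ and $|J_k| = O_\theta(1)$ for $|k|=1$.

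Summing, $\sum_{k\neq 0} |J_k|/|k| = O_\theta(1) + \sum_{|k|\geq 2} O(\theta/k^2) = O(1)$, which completes the argument. The main obstacle is producing the extra factor $1/|k|$ in the estimate for $J_k$: a direct application of Lemma~\ref{l:4.5} to $\int_a^b e(f(x)+kx)\,dx$ combined with the trivial bound $|f'|\leq \theta$ only gives $|J_k| = O(1)$ uniformly in $k$, and then $\sum 1/|k|$ diverges. The crucial saving comes from the integration by parts above, which in turn relies on the monotonicity hypothesis on $f'$ to make $f'/(f'+k)$ be of small bounded variation. A secondary technical point is justifying the termwise integration of the (only conditionally convergent) Fourier series of $\psi$; this is handled by truncating and using the same first-derivative test to control the tail.
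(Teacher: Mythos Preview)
The paper does not give its own proof of this lemma; it simply cites it as Lemma~4.8 of Titchmarsh \cite{Tit}. Your argument is essentially the classical proof one finds there: Euler summation, Fourier expansion of the sawtooth, and then an integration by parts on each $J_k$ that exploits the monotonicity of $f'$ to make $f'/(f'+k)$ of small variation. The identification of the key point --- that a naive first-derivative bound on $J_k$ gives only $O(1)$ and the divergent $\sum 1/|k|$, whereas the integration by parts gains the crucial extra factor $1/|k|$ --- is exactly right.

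Two minor remarks. First, your final bound for the $|k|=1$ terms is $O_\theta(1)$ rather than an absolute $O(1)$; the implied constant in \eqref{eq:4.5} does in fact depend on $1/(1-\theta)$, which is fine since $\theta<1$ is fixed in all applications (and Titchmarsh in effect works with $\theta\le 1/2$). Second, the justification of the Fourier interchange via truncation is genuinely delicate since the partial sums of $\psi$ are not uniformly bounded; the cleanest route is to bound the tail $\sum_{|k|>N}$ of the already-estimated series $\sum |J_k|/|k|$ by $O(N^{-1})$, which your estimates give directly, and then pass to the limit. You have flagged this point, so there is no real gap.
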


This is Lemma 4.8 of Titchmarsh \cite{Tit}.

\begin{Lem}
\label{l:4.7}
Let $0 \leq \sigma \leq 1$, $|t|\leq x$. Then we have uniformly
\begin{equation}
\sum_{x < n \leq N} n^{-s} = \int\limits^N_x u^{-s} du + O(x^{-\sigma}),
\label{eq:4.6}
\end{equation}
with an absolute constant (independent of $s$ too) implied by
the $O$ symbol.
\end{Lem}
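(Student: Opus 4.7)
The plan is to use Abel summation to replace the sum by the integral plus an oscillatory error, and then to bound that error by combining the first derivative test (Lemma~\ref{l:4.5}) with the Fourier expansion of the sawtooth function.

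First, writing $[u] = u - \{u\}$ and performing Stieltjes integration by parts one obtains
\[
\sum_{x < n \leq N} n^{-s} = \int_x^N u^{-s}\,du + x^{-s}\{x\} - N^{-s}\{N\} - s\int_x^N \{u\}\,u^{-s-1}\,du.
\]
Since $N \geq x$ and $\sigma \geq 0$, the two boundary terms are $O(x^{-\sigma})$. Writing $\{u\} = \psi(u) + 1/2$ with the sawtooth $\psi(u) = \{u\} - 1/2$, the contribution of the constant $1/2$ evaluates to $-\tfrac{s}{2}\int_x^N u^{-s-1}\,du = \tfrac{1}{2}(N^{-s} - x^{-s}) = O(x^{-\sigma})$. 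The task therefore reduces to showing
\[
\left| s\int_x^N \psi(u)\,u^{-s-1}\,du \right| \ll x^{-\sigma}
\]
uniformly for $0 \leq \sigma \leq 1$ and $|t| \leq x$ (with $s = \sigma + it$).

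For this I would expand $\psi(u) = -\sum_{k \geq 1} \sin(2\pi k u)/(\pi k)$ and estimate each mode. Writing $u^{-s-1} = u^{-\sigma-1} e^{-it\log u}$ and breaking the sine into complex exponentials reduces each term to an integral of the form $\int_x^N u^{-\sigma-1} e^{iG_k(u)}\,du$ with phase $G_k(u) = \pm 2\pi k u - t\log u$ and derivative $G_k'(u) = \pm 2\pi k - t/u$. Since $|t|/u \leq |t|/x \leq 1$ and $2\pi k \geq 2\pi$, for every $k \geq 1$ the derivative is of fixed sign, satisfies $|G_k'(u)| \geq 2\pi k - 1 \gg k$, and is monotone in $u$ (because $t/u$ is). Lemma~\ref{l:4.5} then gives $\left|\int_x^v e^{iG_k(u)}\,du\right| \ll 1/k$ uniformly in $v$, and combining this with the monotone amplitude $u^{-\sigma-1}$ via the second mean value theorem for integrals yields
\[
\left| \int_x^N u^{-\sigma-1} e^{iG_k(u)}\,du \right| \ll \frac{x^{-\sigma-1}}{k}.
\]
Summing over $k$ produces the convergent series $\sum_k k^{-2}$, and the prefactor $|s| \leq 1 + |t| \leq 1 + x$ outside the integral yields the total bound $|s|\, x^{-\sigma-1} \ll x^{-\sigma}$, precisely on the strength of the hypothesis $|t| \leq x$. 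The implicit constant is absolute and uniform in $s$.

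The only obstacle is a technical one: the Fourier series for $\psi$ is only conditionally convergent, so the termwise interchange with the integral must be justified. The clean workaround is to first integrate by parts once using the bounded primitive $\psi_1(u) = \int_0^u \psi(v)\,dv$, which satisfies $|\psi_1| \leq 1/8$ and possesses the absolutely convergent Fourier series $\sum_{k \geq 1} \cos(2\pi k u)/(2\pi^2 k^2)$, and then apply the same oscillatory-integral analysis to the resulting integral $s(s+1)\int_x^N \psi_1(u)\,u^{-s-2}\,du$. The boundary produced by this extra integration by parts is $O(|s|\, x^{-\sigma-1}) = O(x^{-\sigma})$, while the Fourier-series bound on the integral gives $|s(s+1)|\, x^{-\sigma-2} \ll x^{-\sigma}$, again using $|t| \leq x$. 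This keeps all constants absolute and uniform throughout the strip $0 \leq \sigma \leq 1$, $|t| \leq x$.
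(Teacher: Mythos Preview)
Your argument is correct. The Euler--Maclaurin remainder after Abel summation is exactly $-s\int_x^N \psi(u)u^{-s-1}\,du$, and your Fourier-mode estimate via Lemma~\ref{l:4.5} and the second mean value theorem is sound; the extra integration by parts to $\psi_1$ legitimately handles the interchange issue, and the hypothesis $|t|\le x$ is used precisely where you say, to absorb the factors $|s|$ and $|s(s+1)|$.

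The paper, however, does not argue this way: it simply cites Titchmarsh, and the proof there proceeds through the van der Corput sum-to-integral lemma (essentially Lemma~\ref{l:4.6} here). Namely, with $f(u)=-\tfrac{t}{2\pi}\log u$ one has $|f'(u)|=\tfrac{|t|}{2\pi u}\le \tfrac{1}{2\pi}<1$ for $u\ge x$ and $f'$ monotone, so Lemma~\ref{l:4.6} gives $\sum_{x<n\le N} n^{-it}=\int_x^N u^{-it}\,du+O(1)$ directly; one then inserts the monotone weight $n^{-\sigma}$ by partial summation, and the $O(1)$ propagates to $O(x^{-\sigma})$. This route is shorter and avoids the Fourier expansion of the sawtooth altogether. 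Your approach, by contrast, works from first principles using only Lemma~\ref{l:4.5} (the pure first-derivative integral bound) rather than the packaged Lemma~\ref{l:4.6}, at the cost of the Fourier decomposition and an extra integration by parts. Both yield absolute constants uniform in $s$.
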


\begin{proof}
This relation is contained in the proof of Theorem~4.11 of \cite{Tit}.
However, for this part we may allow $0 \leq \sigma \leq 1$,
since the proof follows from Lemma~4.10 of \cite{Tit}.
\end{proof}

\begin{Lem}
\label{l:4.8}
The Euler beta function $B(u,v)$, defined below for
$\mathrm{Re}\, s > 0$, $\mathrm{Re}\, w > 0$ satisfies the equation
\begin{equation}
B(s, w) \overset{\mathrm{def}}{=} \int\limits^1_0 x^{s-1} (1 -
x)^{w-1} dx = {\Gamma(s) \Gamma(w)\over \Gamma(s+w)}.
\label{eq:4.7}
\end{equation}
\end{Lem}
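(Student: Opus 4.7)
The plan is to derive the identity from the standard integral representation of the Gamma function via a product-and-change-of-variables argument. Throughout I work in the half-plane $\mathrm{Re}\,s > 0$, $\mathrm{Re}\,w > 0$, where both $B(s,w)$ and the individual Gamma integrals converge absolutely, so Fubini is legitimate.

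First I would write
\[
\Gamma(s)\Gamma(w) = \int_0^\infty\!\int_0^\infty u^{s-1} v^{w-1} e^{-(u+v)}\,du\,dv.
\]
Then I would introduce the substitution $u = xy$, $v = (1-x)y$ with $y\in(0,\infty)$ and $x\in(0,1)$, which is a bijection onto the open first quadrant (up to a measure-zero set). A routine Jacobian computation gives $du\,dv = y\,dx\,dy$, and the integrand transforms as $u^{s-1}v^{w-1}e^{-(u+v)} = x^{s-1}(1-x)^{w-1} y^{s+w-2} e^{-y}$. Substituting and applying Fubini,
\[
\Gamma(s)\Gamma(w) = \int_0^1 x^{s-1}(1-x)^{w-1}\,dx \cdot \int_0^\infty y^{s+w-1} e^{-y}\,dy = B(s,w)\,\Gamma(s+w),
\]
which yields the claim after dividing by $\Gamma(s+w)\neq 0$.

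There is no real obstacle here: everything is routine once the substitution is chosen. The only place requiring mild care is the justification of Fubini and of the change of variables, both of which follow from absolute convergence of the double integral under the hypothesis $\mathrm{Re}\,s,\mathrm{Re}\,w>0$. If one wishes to extend the identity beyond this half-plane (for instance to complex $s,w$ with nonpositive real part, where the Gamma function continues meromorphically), one can appeal to analytic continuation in each variable separately, but this is not needed for the applications in the paper, which only require $s,w$ with positive real parts (as arises from $\varrho_1,\varrho_2$ close to $1$ in \eqref{eq:1.23}).
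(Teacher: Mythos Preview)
Your proof is correct and entirely standard. The paper itself does not prove this lemma at all: it simply cites Chapter~3 of Karatsuba~\cite{Kar}. So there is nothing to compare against beyond noting that your argument is the classical one (and is in fact the proof one would find in such a reference).
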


This can be found e.g.\ in Chapter 3 of \cite{Kar}.

The following lemma may be well known, but we did not find any
exact references:

\begin{Lem}
\label{l:4.9}
Let $s = \sigma + it$, $w = \lambda + iv$, $0 < \sigma$, $\lambda \leq 1$, $Y \geq 1$, $\max(|t|, |v|) \leq Y$.
Then we have for any integer $m \geq 2Y$
\begin{equation}
\sum_{Y<k\leq m - Y} k^{s-1}(m - k)^{w-1}
= {\Gamma(s)\Gamma(w)
\over \Gamma(s + w)} m^{s+w-1}  + O(Y).
\label{eq:4.8}
\end{equation}
\end{Lem}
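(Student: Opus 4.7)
The plan: The right-hand side $\Gamma(s)\Gamma(w)m^{s+w-1}/\Gamma(s+w)$ is, by Lemma~\ref{l:4.8}, precisely the scaled Beta integral $m^{s+w-1}B(s,w) = \int_0^m u^{s-1}(m-u)^{w-1}\,du$ (after the substitution $u = mx$). The goal is therefore to show that the discrete sum equals this integral up to an error of size $O(Y)$.

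I would split the sum at $m/2$, writing $S = S_1 + S_2$ with $S_1 = \sum_{Y<k\leq\lfloor m/2\rfloor}k^{s-1}(m-k)^{w-1}$. The involution $k\mapsto m-k$ converts $S_2$ into an instance of $S_1$ with $(s,t)$ and $(w,v)$ interchanged, so only $S_1$ needs to be treated. On this range the weight $b(u) := (m-u)^{w-1}$ is smooth (since $m-u\geq m/2\geq Y$), so Abel summation with $F(u):=\sum_{Y<k\leq u}k^{s-1}$ yields
\[
S_1 \;=\; F(\lfloor m/2\rfloor)\,b(\lfloor m/2\rfloor) \;-\; \int_Y^{\lfloor m/2\rfloor} F(u)\,b'(u)\,du.
\]
Lemma~\ref{l:4.7} (whose hypothesis $|t|\leq Y$ is supplied by assumption) then gives $F(u) = \int_Y^u v^{s-1}\,dv + O(Y^{\sigma-1})$. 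Substituting this expansion and reabsorbing the two ``main'' pieces by integration by parts in reverse leaves
\[
S_1 \;=\; \int_Y^{\lfloor m/2\rfloor} u^{s-1}(m-u)^{w-1}\,du \;+\; O\!\Big(Y^{\sigma-1}(m/2)^{\lambda-1} + Y^{\sigma-1}|w-1|\!\!\int_Y^{m/2}\!\!(m-u)^{\lambda-2}\,du\Big).
\]

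To finish, the error must be shown to be $O(Y)$. Evaluating the inner integral via its antiderivative $(m-u)^{\lambda-1}/(1-\lambda)$ and applying the mean value theorem to $x\mapsto x^{\lambda-1}$ on $[m/2,m-Y]$ produces a factor $1-\lambda$ that cancels the denominator, yielding $\int_Y^{m/2}(m-u)^{\lambda-2}du \ll (m/2)^{\lambda-1}\leq 1$ (the case $\lambda = 1$ yields $O(1)$ directly via the logarithm). Combined with $|w-1|\leq 1+|v|\leq 1+Y$, this makes the $S_1$ error $O(Y)$. Replacing $\lfloor m/2\rfloor$ by $m/2$ costs $O(1)$, and extending the integration from $(Y,m/2)$ to $(0,m/2)$ costs at most $(m/2)^{\lambda-1}\int_0^Y u^{\sigma-1}du \ll Y$. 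Adding the symmetric bound for $S_2$ gives $S = \int_0^m u^{s-1}(m-u)^{w-1}du + O(Y)$, and $u = mx$ with Lemma~\ref{l:4.8} identifies the integral as $\Gamma(s)\Gamma(w)m^{s+w-1}/\Gamma(s+w)$. The main obstacle is precisely this error control: the factor $|w-1|$ from $b'(u)$ may be as large as $Y$, and only the integrability of $(m-u)^{\lambda-2}$ away from $u=m$---guaranteed by truncating at $m/2$---produces the compensating $m^{\lambda-1}\leq 1$ needed to keep the product at $O(Y)$ rather than $O(Y^2)$. Splitting exactly at $m/2$ is what makes the bookkeeping clean, since it simultaneously keeps both $k$ and $m-k$ bounded below by $Y$ (so Lemma~\ref{l:4.7} applies) and of order $m$ within each half (so the smooth factor stays away from its singularity); the implicit constant in $O(Y)$ will absorb factors $1/\sigma$ and $1/\lambda$ from tails of the form $\int_0^Y u^{\sigma-1}du = Y^\sigma/\sigma$, but this dependence is benign in the paper's applications where $\sigma,\lambda$ are bounded away from~$0$.
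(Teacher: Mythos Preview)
Your argument is correct and is essentially the paper's own proof: partial summation in $k^{s-1}$, replacement of the partial sum $K(u)$ by the integral $J(u)$ via Lemma~\ref{l:4.7}, reversal of the integration by parts, and identification of $\int_0^m u^{s-1}(m-u)^{w-1}\,du$ with $B(s,w)\,m^{s+w-1}$ via Lemma~\ref{l:4.8}. The only difference is that the paper does not split at $m/2$ but runs the partial summation over the full range $(Y,m-Y]$ after assuming $|w|\le|s|$ by symmetry; your split at $m/2$ simply makes the $O(Y)$ control of the term $\int E(u)\,((m-u)^{w-1})'\,du$ more explicit, since it keeps the weight $(m-u)^{\lambda-2}$ uniformly integrable in~$\lambda$.
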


\begin{proof}
Let us suppose by symmetry $|w| \leq |s|$ and denote
\begin{equation}
K(x) = \sum_{Y < k \leq x} k^{s-1}, \quad
J(x) = \int\limits^x_Y y^{s-1}dy.
\label{eq:4.9}
\end{equation}
Then by partial summation and integration, resp., we obtain by
(\ref{eq:4.6})--(\ref{eq:4.7}) for the sum $S$ in (\ref{eq:4.8})
\begin{align}
S &= K(m - Y) Y^{w-1} - \int\limits^{m-Y}_Y K(u)((m - u)^{w -
1})' du \label{eq:4.10}\\
&= J(m - Y) Y^{w - 1} - \int\limits^{m - Y}_Y J(u)((m - u)^{w -
1})' du + O(1) \nonumber\\
&= \int\limits^{m-Y}_Y J'(u)(m - u)^{w-1} du + O(1)  \nonumber \\
&= \int\limits^m_0 u^{s-1} (m - u)^{w-1} du + O(Y) \nonumber\\
&= {\Gamma(s)\Gamma(w) \over \Gamma(s + w)} m^{s+w-1} + O(Y). \qedhere
 \nonumber
\end{align}
\end{proof}

Vinogradov's famous estimate on the minor arcs
was substantially simplified by Vaughan (for the proof see \cite[Chapter 25]{Dav}).

\begin{Lem}
\label{l:4.10}
For $|\alpha - a/q| \leq q^{-2}$, $(a,q) = 1$ we have
\begin{equation}
\sum_{p \leq N} \log p e(p\alpha) \ll (Nq^{-1/2} + N^{4/5} +
(Nq)^{1/2}) \log^4 N.
\label{eq:4.11}
\end{equation}
\end{Lem}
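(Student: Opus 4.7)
The plan is to follow Vaughan's simplified version of Vinogradov's original argument, as presented in Chapter~25 of Davenport. The strategy is to replace the von Mangoldt function $\Lambda(n)$ by a combination of ``Type~I'' (short linear) and ``Type~II'' (bilinear) sums, each estimable through standard bounds on $\sum_k \min(N/k, \|k\alpha\|^{-1})$ under the diophantine hypothesis $|\alpha - a/q| \leq q^{-2}$. Since $\sum_{p \leq N} \log p \cdot e(p\alpha) = \sum_{n \leq N} \Lambda(n) e(n\alpha) + O(N^{1/2}\log N)$, it suffices to bound the latter sum.

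First, I would apply Vaughan's identity with parameters $U = V = N^{2/5}$: for $n > U$,
\[
\Lambda(n) = \sum_{\substack{b \mid n\\ b \leq U}} \mu(b)\log(n/b) \;-\; \sum_{\substack{b \mid n\\ b \leq UV}} c(b) \;+\; \sum_{\substack{bc = n\\ b > U,\, c > V}} \mu(b)\Lambda(c),
\]
where $c(b) = \sum_{b = de,\, d \leq U,\, e \leq V} \mu(d)\Lambda(e)$, while the contribution of $n \leq U$ is trivially $O(U\log N)$. Substituting this into $\sum \Lambda(n) e(n\alpha)$ produces two Type~I sums of shape $\sum_{b \leq UV} a_b \sum_{m \leq N/b} g(m) e(bm\alpha)$ with coefficients satisfying $|a_b|, |g(m)| \ll \log N$, together with a Type~II sum $\sum_{U < b \leq N/V}\, \sum_{V < c \leq N/b} \mu(b)\Lambda(c) e(bc\alpha)$.

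Second, for the Type~I sums the inner sum in $m$ is evaluated by partial summation from the geometric series bound $\bigl|\sum_{m \leq M} e(bm\alpha)\bigr| \ll \min(M, \|b\alpha\|^{-1})$. Summing in $b$ against the standard lemma
\[
\sum_{k \leq K} \min(N/k, \|k\alpha\|^{-1}) \ll \Bigl(\frac{N}{q} + K + q\Bigr)\log(2qK),
\]
produces a contribution of order $(Nq^{-1} + UV + q)\log^4 N$. For the Type~II sum I would apply Cauchy--Schwarz in $b$: the squared inner sum, once expanded and summation is interchanged, again reduces to expressions of the form $\sum_k \min(N/k, \|k\alpha\|^{-1})$, giving a bound of order $\bigl(N q^{-1/2} + N V^{-1/2} + N^{1/2} U^{1/2} + (Nq)^{1/2}\bigr)\log^3 N$ after extracting the square root.

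Balancing these contributions with $U = V = N^{2/5}$ and collecting log factors yields the stated estimate. The main obstacle is not conceptual but combinatorial: it is the careful tracking of divisor-function averages and ranges of $b, c$ to ensure the log power does not exceed $4$, and the verification that the balance between Type~I and Type~II errors genuinely forces the exponent $4/5$ in the unavoidable $N^{4/5}$ term. Both points are standard once Vaughan's decomposition is in hand, which is why the paper defers to the presentation in Davenport.
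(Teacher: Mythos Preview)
Your proposal is correct and follows exactly the approach the paper indicates: the paper does not give its own proof but simply refers to Vaughan's simplification of Vinogradov's estimate as presented in Chapter~25 of Davenport, which is precisely the Vaughan-identity decomposition into Type~I and Type~II sums with $U=V=N^{2/5}$ that you have outlined.
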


The following lemma of Gallagher \cite[Lemma 1]{Gal} makes possible
the estimation of integrals for $|S^2_i(\alpha)|$ (see
(\ref{eq:6.2})--(\ref{eq:6.3}) via density theorems for zeros of
$L$-functions).

\begin{Lem}
\label{l:4.11}
Let $u_1, u_2, \dots, u_N$ be arbitrary real numbers. Then for
any $\kappa > 0$
\begin{equation}
\int\limits^\kappa_{-\kappa} \bigg| \sum u_n e(n\eta)\bigg|^2
d\eta \ll
\int\limits^\infty_{-\infty} \bigg| \kappa
\sum^{x+(2\kappa)^{-1}}_x u_n \bigg|^2 dx.
\label{eq:4.12}
\end{equation}
\end{Lem}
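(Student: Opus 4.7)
The plan is to prove Lemma~\ref{l:4.11} by a Fourier-analytic identification of the right-hand side as the $L^2$-norm of an explicit function whose Fourier transform reproduces $S(\eta)=\sum u_n e(n\eta)$ (times a controllable factor) on the interval $[-\kappa,\kappa]$. Set $\delta = (2\kappa)^{-1}$ and define
\[
f(x) = \sum_{x<n\le x+\delta} u_n,
\]
so that the right-hand side of \eqref{eq:4.12} is $\kappa^2\int_{-\infty}^{\infty}|f(x)|^2\,dx$. The whole strategy is then to bound $\int_{-\kappa}^{\kappa}|S(\eta)|^2 d\eta$ above by a multiple of $\int|f(x)|^2dx$ via Plancherel.

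First I would compute $\widehat f$. By Fubini,
\[
\widehat f(\eta) = \int_{-\infty}^\infty f(x)\,e(-\eta x)\,dx
= \sum_n u_n \int_{n-\delta}^{n} e(-\eta x)\,dx
= \overline{S(\eta)}\cdot \int_0^\delta e(\eta t)\,dt,
\]
so $|\widehat f(\eta)|^2 = |S(\eta)|^2\,\bigl|\sin(\pi\eta\delta)/(\pi\eta)\bigr|^2$. Plancherel then yields
\[
\int_{-\infty}^{\infty}|f(x)|^2\,dx = \int_{-\infty}^{\infty}|S(\eta)|^2\,\frac{\sin^2(\pi\eta\delta)}{(\pi\eta)^2}\,d\eta.
\]

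Next I would restrict attention to $|\eta|\le\kappa$. For such $\eta$ we have $|\pi\eta\delta|\le\pi/2$, and the elementary inequality $|\sin x|\ge (2/\pi)|x|$ on $[-\pi/2,\pi/2]$ gives
\[
\frac{\sin^2(\pi\eta\delta)}{(\pi\eta)^2}\;\ge\;\frac{(2\eta\delta)^2}{(\pi\eta)^2}\;=\;\frac{4\delta^2}{\pi^2}\;=\;\frac{1}{\pi^2\kappa^2}.
\]
Dropping the integral over $|\eta|>\kappa$ in the Plancherel identity (which is legitimate because the integrand is nonnegative) gives
\[
\int_{-\infty}^{\infty}|f(x)|^2\,dx \;\ge\; \frac{1}{\pi^2\kappa^2}\int_{-\kappa}^{\kappa}|S(\eta)|^2\,d\eta,
\]
which after multiplying by $\pi^2\kappa^2$ is exactly \eqref{eq:4.12} with implicit constant $\pi^2$.

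There is no real obstacle; the only points that require a moment of care are (i) the legitimacy of applying Plancherel to $f$, which is immediate because the $u_n$ are finitely many and $f$ is compactly supported and bounded, and (ii) verifying the constant in the sine inequality on the right interval $[-\pi/2,\pi/2]$, which is where the particular choice $\delta=1/(2\kappa)$ enters: it is precisely calibrated so that $|\pi\eta\delta|\le\pi/2$ throughout the range of integration in $\eta$, guaranteeing the uniform lower bound on the kernel.
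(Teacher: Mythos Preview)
Your proof is correct and is in fact the standard argument; the paper does not supply its own proof of Lemma~\ref{l:4.11} but simply cites it as Lemma~1 of Gallagher \cite{Gal}, and your Plancherel computation with the kernel lower bound $\sin^2(\pi\eta\delta)/(\pi\eta)^2\ge 4\delta^2/\pi^2$ on $|\eta|\le\kappa$ is exactly Gallagher's original proof.
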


The zero-free region for $L$-functions can be given by
the following

\begin{Lem}
\label{l:4.12}
Let $q \geq 1$ be any integer. There exists an absolute constant
$c_0$ such that
\begin{equation}
L(s, \chi) \neq 0 \ \text{ for } \sigma > 1 - {c_0 \over
\max(\log q, \log^{3/4}(|t|+2))}
\label{eq:4.13}
\end{equation}
with the possible exception of at most one, simple real zero $\beta_1$
of an $L$-function corresponding to a real exceptional character
$\chi_1 \mod q$.
\end{Lem}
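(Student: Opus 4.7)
The plan is to establish the zero-free region in two pieces, corresponding to the two terms in the maximum, and then to handle the possible real exception by Page's argument. When $\log q$ dominates, i.e.\ $\log^{3/4}(|t|+2) \leq \log q$, I would derive the classical de la Vallée Poussin region $\sigma > 1 - c/\log q$; when $\log^{3/4}(|t|+2)$ dominates, I would invoke a Vinogradov--Korobov type argument to push the zero-free region to $\sigma > 1 - c/\log^{3/4}(|t|+2)$. In both cases the tool is the non-negativity trick: from $3 + 4\cos\theta + \cos 2\theta \geq 0$ one obtains, for any primitive character $\chi$ mod $q$,
\[
-3\,\mathrm{Re}\,\frac{\zeta'}{\zeta}(\sigma) - 4\,\mathrm{Re}\,\frac{L'}{L}(\sigma + it,\chi) - \mathrm{Re}\,\frac{L'}{L}(\sigma + 2it,\chi^2) \geq 0
\]
for $\sigma > 1$. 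Combining this with the partial-fraction expansion of $L'/L$ from Hadamard factorization, each zero $\varrho = \beta + i\gamma$ with $|\gamma - t|$ small contributes a negative term of size $-4/(\sigma - \beta)$; choosing $\sigma = 1 + \eta/\log q$ with $\eta$ small forces $\beta < 1 - c_0/\log q$ unless $\chi^2$ is principal and $\gamma = 0$, which is precisely the exceptional-character situation.

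For the $t$-aspect improvement, I would bound $L(s,\chi)$ in the strip $1 - \eta \leq \sigma \leq 2$, $|t| \geq 2$, using Vinogradov's mean-value theorem applied to the exponential sums $\sum n^{-it}\chi(n)$ arising in the approximate functional equation for $L(s,\chi)$. This yields $|L(s,\chi)| \ll (q(|t|+2))^{A(1-\sigma)^{3/2}}\log^{2/3}$, and plugging this into the Borel--Carathéodory / Hadamard three-circles framework (as in \cite[Ch.~VI]{Tit} for $\zeta$, adapted for $L$-functions) translates the growth bound into the zero-free region with the $\log^{3/4}$ exponent; the key point is that a zero at $\varrho$ together with the Hadamard representation forces $L(s,\chi)$ to grow faster than the above growth bound allows, unless $1 - \beta \gg c_0/\log^{3/4}(|t|+2)$.

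The unique real exception is handled by Page's theorem: suppose $\beta_1$ and $\beta_2$ are two real zeros of $L(s,\chi_1)$, $L(s,\chi_2)$ with real characters $\chi_i \mod q_i$, $q_i \leq q$, both satisfying $\beta_i > 1 - c_0/\log q$. If $\chi_1 \neq \chi_2$, apply the non-negativity trick to $\zeta(s) L(s,\chi_1) L(s,\chi_2) L(s,\chi_1\chi_2)$, whose logarithmic derivative at $s = \sigma > 1$ has non-negative Dirichlet coefficients; combined with the contributions of $\beta_1,\beta_2$, choosing $\sigma = 1 + C/\log q$ produces a contradiction if $c_0$ is small enough. If $\chi_1 = \chi_2$, the analogous calculation with $\zeta(s)L(s,\chi_1)^2$ at $s$ slightly to the right of $1$ shows both zeros must coincide and be simple. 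Thus at most one real zero can violate the stated bound, and it must be simple and attached to a single real primitive character.

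The main technical obstacle is the Vinogradov--Korobov estimate in the $t$-aspect, which requires controlling the mean sixth (or higher) power of Weyl sums $\sum_{n \sim N} e(f(n))$ for $f(n) = -t\log n/(2\pi)$ with uniformity in the character $\chi$; carrying this through with the $3/4$ exponent (rather than the weaker $2/3$ that would come from van der Corput alone) is the part that rests on Vinogradov's mean value theorem. Everything else is standard and can be assembled from the treatments in \cite{Dav} and \cite{Tit}.
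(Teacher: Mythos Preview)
The paper does not actually prove Lemma~\ref{l:4.12}; it simply cites Satz~6.2 of Chapter~VIII in Prachar~\cite{Pra}. Your sketch is correct and is precisely the argument one finds in that reference: the de la Vall\'ee Poussin trigonometric inequality combined with the Hadamard product handles the $q$-aspect and isolates the possible real exception via Page's argument, while the $\log^{3/4}$ in the $t$-aspect comes from a Vinogradov-type exponential-sum bound (Prachar's 1957 treatment predates the sharper $\log^{2/3}$ Vinogradov--Korobov form). One small remark: your growth bound $|L(s,\chi)| \ll (q(|t|+2))^{A(1-\sigma)^{3/2}}\log^{2/3}$ would, if true uniformly in $q$, actually yield the stronger $\log^{2/3}$ region in the $t$-aspect; for the $\log^{3/4}$ as stated you only need the weaker estimate with exponent $(1-\sigma)^{4/3}$, which is what the older Vinogradov mean-value results give and what Prachar uses.
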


This is Satz 6.2 of Chapter VIII in \cite{Pra}; the possibly existing
exceptional zeros are often called Siegel zeros.

The following result is a reformulation of a theorem of Landau
(for a proof see \cite[\S 14]{Dav}).

\begin{Lem}
\label{l:4.13}
There is a constant $c_1 > 0$ such that
there is at most one real primitive $\chi$ to a modulus $\leq z$
for which $L(s, \chi)$ has a real zero $\beta$ satisfying
\begin{equation}
\beta > 1 - {c_1 \over \log z}.
\label{eq:4.14}
\end{equation}
We remark that for $z$ large enough, $c_1 = \frac12 + o(1)$ can be
chosen\/ {\rm \cite{Pin}}.
\end{Lem}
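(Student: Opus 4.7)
The plan is to follow Landau's classical positivity argument, as presented in \cite[\S 14]{Dav}. Suppose for contradiction that two distinct real primitive characters $\chi_1\mod q_1$ and $\chi_2\mod q_2$ with $q_1, q_2 \leq z$ both satisfy $L(\beta_i,\chi_i)=0$ with $\beta_i > 1 - c_1/\log z$. Since $\chi_1 \neq \chi_2$ are real, the product $\chi_1\chi_2$ is non-principal, so $L(s,\chi_1\chi_2)$ is entire. I would form
\[
F(s) = \zeta(s)L(s,\chi_1)L(s,\chi_2)L(s,\chi_1\chi_2).
\]
Expanding $\log F(s) = \sum_{p,k}(kp^{ks})^{-1}(1+\chi_1(p^k))(1+\chi_2(p^k))$ and noting that for real characters $1+\chi_i(p^k) \in \{0,1,2\}$, one sees that $F(s) = \sum d_n n^{-s}$ with $d_n \geq 0$ and $d_1 = 1$. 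In particular, for every real $\sigma > 1$,
\[
-\frac{F'(\sigma)}{F(\sigma)} = \sum_{n\geq 2}\Lambda(n)(1+\chi_1(n))(1+\chi_2(n))n^{-\sigma} \geq 0.
\]

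Next I would invoke the standard partial-fraction expansion arising from the Hadamard products of $\zeta$ and the three $L$-functions: for $s$ in a bounded neighbourhood of $1$,
\[
-\frac{F'(s)}{F(s)} = \frac{1}{s-1} - \sum_{|\gamma|\leq 1}\frac{1}{s-\varrho} + O(\log z),
\]
where the sum runs over the non-trivial zeros $\varrho = \beta + i\gamma$ of $F$, and the error bound incorporates the logarithms of the conductors $q_1$, $q_2$, and $[q_1,q_2] \leq z^2$, together with the contribution of the gamma-factors at $s=1$. Combined with the non-negativity above, and using $\operatorname{Re}(1/(s-\varrho)) \geq 0$ for real $s>1$ and $\operatorname{Re}\varrho \leq 1$, I drop all zeros of $F$ except $\beta_1,\beta_2$ to obtain
\[
\frac{1}{\sigma-\beta_1} + \frac{1}{\sigma-\beta_2} \leq \frac{1}{\sigma-1} + C\log z, \qquad \sigma>1 \text{ real}.
\]

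Setting $\sigma = 1 + K/\log z$ and $\eta_i = 1-\beta_i \leq c_1/\log z$, this inequality becomes $2/(K+c_1) \leq 1/K + C$, and the choice $K = 2c_1$ reduces it to $1 \leq 6Cc_1$, which fails as soon as $c_1 < 1/(6C)$. That contradiction establishes the lemma with an effective (but non-explicit) $c_1 > 0$. The case of a single primitive character with two real zeros in the range is already included in Lemma~\ref{l:4.12}.

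The main technical obstacle is the precise justification of the truncated partial-fraction formula with the uniform $O(\log z)$ error term, for which one needs (i) that $F$ has at most $O(\log z)$ non-trivial zeros with $|\gamma| \leq 1$, and (ii) a careful treatment of the remaining zero contribution together with the gamma-factor terms near $s=1$; both are standard but must be tracked through the four constituent $L$-functions. The sharper constant $c_1 = \tfrac12 + o(1)$ noted in the remark lies substantially deeper than the crude positivity argument above and relies on a large-sieve / Fourier-type refinement for real primitive characters; for this improvement I would simply invoke \cite{Pin}.
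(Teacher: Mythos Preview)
Your proposal is correct and matches the paper's treatment: the paper does not prove Lemma~\ref{l:4.13} at all but simply cites Landau's theorem as in \cite[\S 14]{Dav}, and what you have written is precisely the Landau positivity argument from that reference (the auxiliary function $\zeta L(s,\chi_1)L(s,\chi_2)L(s,\chi_1\chi_2)$, nonnegativity of $-F'/F$ on $(1,\infty)$, the partial-fraction approximation with $O(\log z)$ error, and the choice $\sigma=1+O(1)/\log z$). Your handling of the sharper constant $c_1=\tfrac12+o(1)$ by invoking \cite{Pin} is also exactly what the paper does.
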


Siegel's theorem (\cite[\S 14]{Dav}) gives an upper estimate for $\beta$:

\begin{Lem}
\label{l:4.14}
For any $\varepsilon > 0$ there exists a positive ineffective
constant $c(\varepsilon)$ such that if $\chi$ is a real character
$\mod q$, $L(\beta, \chi) = 0$, $\beta$ real, then
\begin{equation}
\beta < 1 - c(\varepsilon)q^{-\varepsilon}.
\label{eq:4.15}
\end{equation}
\end{Lem}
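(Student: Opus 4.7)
The plan is to carry out Siegel's classical argument. Fix $\varepsilon > 0$; it suffices to prove the bound when $\chi$ is primitive, the induced case following easily. The proof is a dichotomy. \emph{Case 1:} suppose that no real primitive character $\chi_1$ of conductor $\geq 2$ has $L(s, \chi_1)$ with a real zero in $[1 - \eta, 1)$, where $\eta = \eta(\varepsilon) > 0$ is a small parameter. Then every real zero $\beta$ in question satisfies $1 - \beta > \eta$, stronger than the required bound. \emph{Case 2:} such a character $\chi_1$ of conductor $q_1$ exists, with real zero $\beta_1 \in [1 - \eta, 1)$; fix one such pair $(\chi_1, \beta_1)$ once and for all.

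For any \emph{other} real primitive $\chi_2 \pmod{q_2}$ with real zero $\beta_2$ of $L(s, \chi_2)$, introduce the auxiliary function
\[
F(s) = \zeta(s)\, L(s, \chi_1)\, L(s, \chi_2)\, L(s, \chi_1 \chi_2).
\]
Since $\chi_1, \chi_2$ are real, writing
\[
\log F(s) = \sum_p \sum_{k \geq 1} \frac{(1 + \chi_1(p^k))(1 + \chi_2(p^k))}{k\, p^{ks}}
\]
shows the terms are non-negative, hence $F(s) = \sum_n a_n n^{-s}$ with $a_n \geq 0$ and $a_1 = 1$. Moreover $F$ is meromorphic with a single simple pole at $s = 1$ of residue $\lambda = L(1, \chi_1)\,L(1, \chi_2)\,L(1, \chi_1 \chi_2) > 0$, and a convexity argument in the strip gives $|F(\sigma + it)| \ll (q_1 q_2 (|t| + 2))^{A(1 - \sigma)}$ for $1/2 \leq \sigma \leq 2$ and some absolute $A$.

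The main step is to expand $G(s) = F(s) - \lambda/(s-1)$ as a Taylor series about $s = 2$, where it is analytic, and exploit the non-negativity of the Dirichlet coefficients of $F$ together with the Cauchy estimates for $G$ in order to derive
\[
F(s_0) \geq \tfrac{1}{2} - C_1 \lambda (q_1 q_2)^{C_2 (1 - s_0)}
\]
for absolute constants $C_1, C_2, c_0$ and any real $s_0 \in (1 - c_0, 1)$. Applying this at $s_0 = \beta_2$, where $F(\beta_2) = 0$, and using the trivial bound $L(1, \chi_i) \ll \log q_i$, one rearranges to obtain $1 - \beta_2 \geq c(\chi_1)\, q_2^{-\varepsilon}$, where $c(\chi_1) > 0$ depends on $L(1, \chi_1)$ and $q_1$. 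The bound for the distinguished character $\chi_1$ itself is handled by running the argument with the roles of $\chi_1$ and some non-exceptional real primitive character interchanged, which exists by the Case~2 hypothesis combined with Lemma~\ref{l:4.13}. The main obstacle, and the source of the \emph{ineffectivity} in the constant, is that in Case 2 we cannot identify $\chi_1$ or bound $L(1, \chi_1)$ from below explicitly; we merely know that some such character exists. Consequently $c(\varepsilon)$ is defined by taking the worst of the effective Case~1 constant and the Case~2 constant, which depends on the unidentified $\chi_1$, and cannot be made explicit. The remaining technical work — the convexity-type lower bound for $F(s_0)$ — is routine given the positivity of the Dirichlet coefficients together with Phragmén–Lindelöf bounds for $F$ on the strip.
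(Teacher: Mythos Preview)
The paper does not supply a proof here; it simply cites Davenport for Siegel's theorem. Your sketch follows the classical Estermann argument, and the overall strategy---the dichotomy, the auxiliary product $F(s)=\zeta(s)L(s,\chi_1)L(s,\chi_2)L(s,\chi_1\chi_2)$, the non-negativity of its Dirichlet coefficients---is correct and standard.

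There is, however, a genuine gap at the key step. You apply the lower bound for $F$ at $s_0=\beta_2$ and claim that with $L(1,\chi_i)\ll\log q_i$ this ``rearranges'' to $1-\beta_2\ge c(\chi_1)\,q_2^{-\varepsilon}$. It does not. From $F(\beta_2)=0$ your inequality gives only $(q_1q_2)^{C_2(1-\beta_2)}\gg 1/\lambda$; bounding $\lambda\ll L(1,\chi_1)(\log q_2)(\log q_1q_2)$ makes the right side tend to $0$ as $q_2\to\infty$, and no lower bound on $1-\beta_2$ results. (Equivalently: the lemma at $\beta_2$ yields $\lambda>c(1-\beta_2)(q_1q_2)^{-C(1-\beta_2)}$, an \emph{upper} bound on the function $x\mapsto x(q_1q_2)^{-Cx}$, which vanishes at $x=0$ and hence cannot force $x=1-\beta_2$ away from $0$.) The correct move is to apply the inequality at $s_0=\beta_1$, the \emph{fixed} zero: then $\lambda\gg(1-\beta_1)(q_1q_2)^{-C(1-\beta_1)}$ with $C(1-\beta_1)<\varepsilon/2$ by the choice of $\eta$, which isolates $L(1,\chi_2)\gg c(\chi_1,\varepsilon)\,q_2^{-\varepsilon}$; the zero bound for $\beta_2$ then follows from the elementary estimate $L(1,\chi_2)\ll(1-\beta_2)\log^2 q_2$. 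Two minor points: your displayed inequality for $F(s_0)$ is missing a factor $1/(1-s_0)$ coming from the pole; and your separate treatment of $\chi_1$ itself is unnecessary, since $\chi_1,q_1,\beta_1$ are fixed with $\beta_1<1$, so one simply shrinks $c(\varepsilon)$ below $(1-\beta_1)q_1^{\varepsilon}$---this is precisely where the ineffectivity enters.
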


We will use the explicit formula for $\psi(x, \chi)$ in the
following form.

\begin{Lem}
\label{l:4.15}
Let $\chi$ be any character $\mod q$, $T \geq \sqrt{x}$, $x \geq
2$. Let $E(\chi) = 1$ if $\chi = \chi_0$, $E(\chi) = 0$ otherwise. Then we have
\begin{equation}
\psi(x, \chi) \overset{\text{\rm def}}{=} \sum_{p \leq x} \chi(p) \log p = E(\chi )x - \sum_{\substack{|\gamma| \leq T\\
\beta \geq 1/2}}
{x^\varrho \over \varrho} + O(\sqrt x \log^2 qx).
\label{eq:4.16}
\end{equation}
\end{Lem}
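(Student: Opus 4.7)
The plan is to deduce (\ref{eq:4.16}) from the standard truncated explicit formula for $\psi^*(x,\chi) := \sum_{n\leq x}\chi(n)\Lambda(n)$, derived in \cite[\S 19--20]{Dav}. For any character $\chi\mod q$ and $2\leq T\leq x$,
$$
\psi^*(x,\chi) \;=\; E(\chi)\,x \;-\; \sum_{|\gamma|\leq T}\frac{x^\varrho}{\varrho} \;+\; O\!\left(\frac{x\log^2(qxT)}{T}\right) + O(\log x),
$$
where the sum runs over non-trivial zeros of $L(s,\chi)$. For imprimitive $\chi$ I would first apply this to the inducing primitive character $\chi^*\mod r$; the difference $\psi^*(x,\chi)-\psi^*(x,\chi^*)$ is $O(\log q\log x)$, and the zeros of $L(s,\chi)$ which are not zeros of $L(s,\chi^*)$ all lie on the line $\sigma=0$, hence outside the range $\beta\geq 1/2$. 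Choosing $T\geq\sqrt x$ (and, without loss of generality, $T\leq x$) makes the truncation error $O(\sqrt x\log^2 qx)$.

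Next I would discard from the finite sum the zeros with $\beta<1/2$. Since $x^\beta<\sqrt x$ for such $\varrho$, their total contribution is bounded by
$$
\sqrt x\sum_{|\gamma|\leq T}\frac{1}{|\varrho|}.
$$
The standard zero-count $\#\{\varrho_\chi:|\gamma-t|\leq 1\}\ll \log q(|t|+2)$, together with dyadic decomposition in $|\gamma|$, yields $\sum_{|\gamma|\leq T}1/|\varrho|\ll \log^2(qT)\ll \log^2 qx$, so this step adds a further $O(\sqrt x\log^2 qx)$.

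Finally I would replace $\psi^*(x,\chi)$ on the left by $\sum_{p\leq x}\chi(p)\log p$; the discrepancy is the contribution of prime powers $p^k$ with $k\geq 2$, trivially
$$
\sum_{\substack{p^k\leq x\\ k\geq 2}}\log p \;=\; O(\sqrt x\log x),
$$
again absorbed in the error term. Collecting the three error contributions yields the claimed formula.

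No step presents a genuine obstacle: the whole argument is a bookkeeping assembly of three textbook error estimates combined with the reduction to primitive characters. The only point requiring a little attention is the bound $\sum_{|\gamma|\leq T}1/|\varrho|\ll \log^2(qT)$, but this is a routine consequence of the unit-strip zero-counting estimate summed dyadically over $|\gamma|$.
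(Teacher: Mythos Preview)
Your proposal is correct and follows essentially the same route as the paper: the paper's proof is a two-line reference to formulas (7)--(8) of \S 19 in Davenport together with the trivial prime-power bound, and you have simply written out what that entails. The extra details you supply --- the reduction from imprimitive $\chi$ to the inducing primitive character, and the estimate $\sum_{|\gamma|\le T}|\varrho|^{-1}\ll\log^2(qT)$ used to discard zeros with $\beta<1/2$ --- are exactly the bookkeeping that the paper's citation implicitly absorbs.
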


\begin{proof}
It follows from formulas (7)--(8) of \S 19 of \cite{Dav}, after a
trivial estimate for the contribution of prime-powers to
$\psi(x, \chi)$.
\end{proof}

The following zero-density estimates for $L$-functions
will be used in the sequel. (In the following $Q \geq 1$, $T
\geq 2$, $1/2 \leq \alpha \leq 1$, $\varepsilon > 0$ is an
arbitrary positive number.)

\begin{Lem}
\label{l:4.15a}
$N^*(\alpha, T, Q) \ll (Q^2 T)^{3(1-\alpha) \over 2-\alpha} \log^9 QT$.
\end{Lem}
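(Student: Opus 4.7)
\medskip

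\noindent\textbf{Plan.} This is the classical Jutila--Montgomery density estimate, and I would prove it by the standard ``mollified zero-detection + hybrid large sieve'' scheme. First I would fix a mollifier
\[
M_X(s,\chi) = \sum_{n \leq X} \mu(n)\chi(n)n^{-s},
\]
with $X$ a parameter to be optimised, so that $L(s,\chi)M_X(s,\chi) = 1 + \sum_{n>X}b_n\chi(n)n^{-s}$ where $b_n = \sum_{d\mid n,\,d\leq X}\mu(d)$ and $b_n \ll \tau(n)$. Multiplying by the Mellin kernel of a smooth weight $e^{-n/Y}$, shifting the contour past $s=\varrho$, and using $L(\varrho,\chi)=0$, I would extract for each zero the identity
\[
\sum_{n\leq X}\chi(n)n^{-\varrho} + \sum_{X<n\leq Y\log Y}b_n\chi(n)n^{-\varrho} \;=\; O\bigl(Y^{1/2-\beta}\log qT\bigr),
\]
where only zeros with $\beta \geq \alpha$ and $|\gamma|\leq T$ are considered. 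By choosing $Y = (Q^2T)^{1/(2-\alpha)}$, the right side is small, so at least one of the two Dirichlet polynomials on the left must be $\gg 1$ in absolute value.

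\medskip

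\noindent\textbf{Large sieve step.} Next, I would discard all but a well-spaced subset $\mathcal{R}$ of zeros $(\varrho,\chi)$: for each character $\chi$, keep zeros whose imaginary parts are pairwise $\geq 1$ apart; this loses only a factor $\log QT$. For the shorter polynomial $\sum_{n\leq X}\chi(n)n^{-\varrho}$ one applies Montgomery's hybrid large sieve
\[
\sum_{q\leq Q}\underset{\chi(q)}{\sum\nolimits^*}\sum_{\varrho\in\mathcal{R}(\chi)}\Bigl|\sum_{n\leq N}a_n\chi(n)n^{-i\gamma}\Bigr|^2 \ll (Q^2T + N)\sum_{n\leq N}|a_n|^2,
\]
applied with $a_n = n^{-\beta}$ truncated dyadically. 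This gives
\[
\mathcal{R}_1 \ll (Q^2T + X)X^{2(1-\alpha)}\log^c(QT),
\]
where $\mathcal{R}_1$ counts the zeros for which the short polynomial is large.

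\medskip

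\noindent\textbf{Halász step for the long polynomial.} The longer polynomial of length $Y$ would blow up under direct large sieve, so here I would invoke the Halász--Montgomery fourth-moment/reflection argument: write $|\sum_n b_n\chi(n)n^{-\varrho}|^{2k}$ with $k$ chosen so that $Y^k\leq Q^2T$, apply the large sieve to the resulting polynomial of length $Y^k$, and divide out. This yields
\[
\mathcal{R}_2 \ll (Q^2T)\,Y^{2(1-\alpha)-1}\log^c(QT) = (Q^2T)^{(2-\alpha)/(2-\alpha)\cdot\ldots}\log^c(QT).
\]
With the choice $X = Y = (Q^2T)^{1/(2-\alpha)}$ that balances the two estimates, both contributions become $(Q^2T)^{3(1-\alpha)/(2-\alpha)}\log^9(QT)$, which is exactly the claimed bound. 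Summing over $O(\log QT)$ dyadic ranges of $\beta$ and over the two polynomial types only costs a further $\log$ factor that is absorbed into the $\log^9$.

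\medskip

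\noindent\textbf{Main obstacle.} The technical core is the Halász step: controlling the long mollifier-tail polynomial $\sum_{n>X}b_n\chi(n)n^{-\varrho}$ for zeros in $\mathcal{R}$ without letting the length $Y$ destroy the estimate. The parameter balancing that produces the Jutila exponent $3(1-\alpha)/(2-\alpha)$ (rather than, say, the Ingham-type $3(1-\alpha)/(3\alpha-1)$) hinges precisely on being able to raise the long polynomial to a power $k$ with $Y^k\approx Q^2T$, and on having the divisor-bounded coefficients $b_n\ll\tau(n)$ so that the $\ell^2$-norm remains under control. Once that balance is set up, the rest is bookkeeping.
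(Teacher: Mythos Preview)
The paper does not prove this lemma at all: immediately after the statement it writes ``This is Theorem 12.2 of Montgomery \cite{Mon}'' and moves on, quoting it as a known result from the literature. Your outline is essentially a sketch of Montgomery's own proof in that reference (mollifier zero-detector, hybrid large sieve, Hal\'asz--Montgomery step, parameter balancing), so in spirit you are reproducing exactly what the cited source does rather than offering a different route.

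One small caution about your write-up, though it does not affect the overall correctness of the plan: with the mollifier you define, $b_n=\sum_{d\mid n,\,d\leq X}\mu(d)$ equals $[n=1]$ for every $n\leq X$, so the displayed identity should not contain a separate ``short'' sum $\sum_{n\leq X}\chi(n)n^{-\varrho}$; what survives on the left is just the single term $e^{-1/Y}\approx 1$ against the tail $\sum_{n>X}b_n\chi(n)n^{-\varrho}e^{-n/Y}$. The two-class split in Montgomery's argument comes instead from dyadically decomposing that tail into pieces of length $N$ with $X<N\ll Y\log Y$ and treating short $N$ (raise to a power so the length lands in the large-sieve range) separately from long $N$ (Hal\'asz). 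Your stated $\mathcal R_1$ bound, as written, does not balance to the target exponent; the correct bookkeeping is in Montgomery's Chapter~12.
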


This is Theorem 12.2 of Montgomery \cite{Mon}.

\begin{Lem}
\label{l:4.16}
$N^*(\alpha, T, Q) \ll_\varepsilon (Q^2
T^{6/5})^{{20\over 9}(1-\alpha)+\ve}$.
\end{Lem}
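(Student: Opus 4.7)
The plan is to prove the density estimate by the classical zero-detection / large-values method, feeding in a hybrid power-moment estimate stronger than the one used in Lemma~\ref{l:4.15a}.

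\emph{Step 1 (Zero detection).} For each primitive character $\chi\pmod q$ with $q\leq Q$ and each zero $\varrho=\beta+i\gamma$ of $L(s,\chi)$ with $\beta\geq\alpha$, $|\gamma|\leq T$, I would introduce the M\"obius mollifier $M_Y(s,\chi)=\sum_{n\leq Y}\mu(n)\chi(n)n^{-s}$. A smoothed Perron formula applied to $L(s,\chi)M_Y(s,\chi)$ yields, at $s=\varrho$, an identity of the form
\[
1 \ll \Bigl|\sum_{Y<n\leq X} c_n \chi(n) n^{-\varrho}\Bigr|,\qquad |c_n|\leq d(n),
\]
with $X=(QT)^B$ for some absolute $B$, and a dyadic decomposition singles out a range $N\in[Y,X]$ on which $\bigl|\sum_{n\sim N}c_n\chi(n)n^{-\varrho}\bigr|\gg(\log QT)^{-O(1)}$.

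\emph{Step 2 (Mean-value inputs).} Standard spacing arguments let us pass to a subset of zeros whose imaginary parts are, for fixed $\chi$, pairwise $\geq 1$ apart, at the cost of $\log QT$. I would then input two hybrid mean-value estimates: the hybrid large sieve
\[
\sum_{q\leq Q}\underset{\chi(q)}{\sum\nolimits^*}\int_{-T}^{T}\Bigl|\sum_{n\leq N}a_n\chi(n)n^{-it}\Bigr|^2 dt \ll (Q^2T+N)\sum_{n\leq N}|a_n|^2,
\]
and the hybrid fourth-power moment $\sum_{q\leq Q}\sum^*_\chi\int_{-T}^{T}|L(\tfrac12+it,\chi)|^4 dt\ll(Q^2T)^{1+\varepsilon}$. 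Each large-value point $\varrho_j$ is then reflected from $\mathrm{Re}\,s=\beta$ down to $\sigma=1/2$ via an approximate functional equation, incurring a conversion factor $N^{1/2-\alpha}$.

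\emph{Step 3 (Halász and optimisation).} I would apply the Hal\'asz--Montgomery large-value inequality to bound the number of well-spaced points at which a Dirichlet polynomial of length $N$ achieves value $\geq V$, once against the $L^2$-weight (large sieve) and once against the $|L|^4$-weight (fourth moment). Splitting the dyadic ranges of $N$: for small $N$ the large sieve yields $\ll N+Q^2T\cdot N^{1-2\alpha}$; for larger $N$ the fourth-moment together with reflection takes over. Optimising the mollifier length $Y$ and the dyadic parameter $N$ produces the hybrid weight $Q^2T^{6/5}$ in place of $Q^2T$ and the coefficient $\tfrac{20}{9}(1-\alpha)$ in the exponent, which is what is claimed.

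The principal obstacle is Step~3: one must verify that the small-$N$, medium-$N$ and large-$N$ regimes balance simultaneously at the optimum, and in particular that the correct $t$-aspect weight is $T^{6/5}$ rather than $T$ or $T^{3/2}$. The upgrade from the Montgomery--Ingham exponent $3(1-\alpha)/(2-\alpha)$ of Lemma~\ref{l:4.15a} to $\tfrac{20}{9}(1-\alpha)$ hinges precisely on replacing the second-moment input by the fourth-moment input and carrying out the reflection carefully; once the parameters are chosen optimally, the claimed bound follows at once.
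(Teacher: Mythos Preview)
The paper does not prove this lemma at all: immediately after the statement it writes ``This is Theorem~2 of Heath--Brown \cite{Hea}'' and moves on. So there is no argument in the paper to compare your sketch against; the lemma is imported wholesale from an external source.

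As for your sketch itself, Steps~1--2 are the standard zero-detection set-up and are fine. The real content is Step~3, and here the proposal does not actually deliver the exponents. With only the hybrid large sieve and the hybrid fourth moment $\sum\sum^*\int|L|^4\ll(Q^2T)^{1+\varepsilon}$ as inputs, the Hal\'asz--Montgomery machinery yields exponents of Ingham/Huxley type (e.g.\ $3(1-\alpha)/(2-\alpha)$ or $12/5\cdot(1-\alpha)$ in suitable ranges), not $\tfrac{20}{9}(1-\alpha)$, and it gives no mechanism for the anomalous $t$-weight $T^{6/5}$ in place of $T$. Heath--Brown's proof in \cite{Hea} uses substantially more: a sharpened large-values theorem (of Huxley type, refined via his own reflection argument) together with higher-moment information beyond the fourth moment, and it is the balancing of \emph{those} inputs that produces both the $\tfrac{20}{9}$ and the $6/5$. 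Your final paragraph concedes that Step~3 is the obstacle and that the numerology has not been checked; in fact with the two inputs you list the optimisation will not close, so this is a genuine gap rather than a routine verification. If you want a self-contained argument you would need to reproduce Heath--Brown's large-values lemma and his specific parameter choice, which is considerably more than what is sketched here.
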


This is Theorem 2 of Heath--Brown \cite{Hea}.

\begin{Lem}
\label{l:4.17}
$N^*(\alpha, T, Q) \ll_\ve(Q^2 T)^{(2+\ve)(1-\alpha)}$ for
$\alpha \geq 4/5$.
\end{Lem}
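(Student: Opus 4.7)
The plan is to establish Lemma~\ref{l:4.17} by the standard zero-detection and large-values approach of Montgomery, Huxley, and Jutila. For each primitive $\chi \mod q$ with $q \leq Q$ and each zero $\varrho = \beta + i\gamma$ of $L(s,\chi)$ in the range $\beta \geq \alpha$, $|\gamma| \leq T$, I would introduce a Dirichlet mollifier $M_N(s,\chi) = \sum_{n \leq N} \mu(n)\chi(n)\, n^{-s}$ and apply Mellin inversion to smoothly truncate $L(s,\chi) M_N(s,\chi)$ at height $Y = (qT)^A$. Since $L\cdot M_N$ differs from $1$ by a Dirichlet series supported on $n > N$, this yields, for each zero, a dyadic length $N_1 \in [N, Y^2]$ together with coefficients $|a_n| \leq d_2(n)$ such that
\[
\Bigl|\sum_{N_1 < n \leq 2N_1} a_n \chi(n)\, n^{-\varrho}\Bigr| \gg (\log qT)^{-1}.
\]
Selberg's classical bound on the number of zeros in a unit box lets me restrict, at the cost of only a logarithmic factor, to well-spaced families $(\chi_j, \gamma_j)$.

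Next I invoke Huxley's large-values inequality (\cite[Ch.~7]{Mon}): for $R$ well-spaced pairs $(\chi_j, t_j)$ with $q_j \leq Q$, $|t_j| \leq T$, and $\bigl|\sum_{n \leq N_1} a_n \chi_j(n) n^{-it_j}\bigr| \geq V$, one has
\[
R \ll \bigl(G^2 V^{-2} + N_1^{-1} Q^2 T\, G^2 V^{-2}\bigr) (\log QTN_1)^C,
\]
with $G^2 = \sum_n |a_n|^2 \ll N_1 (\log N_1)^C$. Taking $V \asymp N_1^{\alpha-1/2}(\log qT)^{-1}$ yields
\[
R \ll \bigl(N_1^{2(1-\alpha)} + Q^2 T\, N_1^{1-2\alpha}\bigr) (\log QTN_1)^C.
\]
The two terms balance at $N_1 \asymp Q^2 T$, where each is $\asymp (Q^2 T)^{2(1-\alpha)}$; summing over the $O(\log QT)$ dyadic choices of $N_1$ introduces only a further logarithmic factor and gives the desired $(Q^2T)^{(2+\varepsilon)(1-\alpha)}$.

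The restriction $\alpha \geq 4/5$ enters through the zero-detection step: for such $\alpha$ the mollified series approximating $1/L(s,\chi)$ converges well enough on the line $\mathrm{Re}\, s = \alpha$ to permit the mollifier length $N \asymp (Q^2T)^{1/2}$, so that the resulting polynomial length $N_1 \asymp Q^2 T$ lies in the admissible range and the diagonal/off-diagonal terms of Huxley's inequality can be balanced simultaneously. The main obstacle is the careful bookkeeping of this step — one must verify that the tail of $L M_N$ really concentrates on a single dyadic block and that the reflection via the functional equation covers the complementary regime $N_1 > Q^2T$ without a loss. For $\alpha < 4/5$ this balance collapses, and the sharper sixth-moment method of Heath-Brown (Lemma~\ref{l:4.16}) becomes necessary in place of Huxley's inequality.
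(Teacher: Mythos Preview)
The paper does not prove Lemma~\ref{l:4.17}; it is simply quoted as Theorem~1 of Jutila~\cite{Jut}, so there is no argument in the paper to compare against.

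Your sketch has a real gap. What you call ``Huxley's large-values inequality'' is in fact the basic Montgomery mean-value theorem, and with it you correctly reach $R \ll \bigl(N_1^{2(1-\alpha)} + Q^2T\,N_1^{1-2\alpha}\bigr)(\log)^C$ for a detecting polynomial of dyadic length~$N_1$. But $N_1$ is not a parameter you may set equal to $Q^2T$: the zero-detection step produces some $N_1$ in a range $[N,Y]$, and near the lower endpoint $N \asymp (Q^2T)^{1/2}$ the term $Q^2T\,N_1^{1-2\alpha}$ gives $(Q^2T)^{(3-2\alpha)/2}$, which at $\alpha=4/5$ is $(Q^2T)^{7/10}$ --- far larger than the target $(Q^2T)^{2/5}$. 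Optimising $N$ in the classical way only recovers the Ingham--Montgomery exponent $3(1-\alpha)/(2-\alpha)$ of Lemma~\ref{l:4.15a}, not the density-hypothesis exponent.

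The missing idea is Jutila's device of raising the detector to a power: if a zero is detected by a polynomial of length $N_1$, its $k$-th power (a Dirichlet polynomial of length $N_1^k$ with coefficients $\ll_\varepsilon n^\varepsilon$) also detects it, and one chooses $k$ so that $N_1^k$ lands in a fixed window around $Q^2T$. This forces the effective length to the balance point for \emph{every} zero, after which the mean-value theorem does give $(Q^2T)^{(2+\varepsilon)(1-\alpha)}$. The restriction $\alpha \geq 4/5$ does not come from ``convergence of the mollifier'' as you suggest; it arises in the treatment of the Class~II zeros detected via the integral of $|L\cdot M_N|$ along the half-line, where a fourth-moment or Hal\'asz--Montgomery input is required and determines the threshold.
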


This is Theorem 1 of Jutila \cite{Jut}.

Lemmas \ref{l:4.16} and \ref{l:4.17} clearly imply for $1/2 \leq
\alpha \leq 1$

\begin{Lem}
\label{l:4.18}
$N^*(\alpha, T, Q) \ll (Q^2 T^{6/5})^{({20\over 9} +
\varepsilon)(1-\alpha)}$.
\end{Lem}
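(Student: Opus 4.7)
The plan is to split the range of $\alpha$ at the threshold $\alpha = 4/5$ and apply Lemma~\ref{l:4.16} on $[1/2, 4/5]$ and Lemma~\ref{l:4.17} on $[4/5, 1]$. The only subtlety is that in Lemma~\ref{l:4.16} the $\varepsilon$ appears additively in the exponent (as $(20/9)(1-\alpha) + \varepsilon$), while in the target statement it appears multiplied by $1-\alpha$. These two shapes are equivalent whenever $1-\alpha$ is bounded below away from $0$, so the additive-$\varepsilon$ form is only genuinely weaker than the multiplicative one in a neighborhood of $\alpha = 1$; splitting at $4/5$ precisely shelters us from this neighborhood.

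On the low range $1/2 \leq \alpha \leq 4/5$, I invoke Lemma~\ref{l:4.16}. Here $1-\alpha \geq 1/5$, hence $\varepsilon \leq 5\varepsilon(1-\alpha)$, and so
\[
(20/9)(1-\alpha) + \varepsilon \;\leq\; (20/9 + 5\varepsilon)(1-\alpha).
\]
After relabelling $\varepsilon$, this delivers the desired bound.

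On the high range $4/5 \leq \alpha \leq 1$, I invoke Lemma~\ref{l:4.17}, which produces $(Q^2 T)^{(2+\varepsilon)(1-\alpha)}$, and I compare this exponent-by-exponent with the target $(Q^2T^{6/5})^{(20/9+\varepsilon)(1-\alpha)}$. The exponent of $Q$ is $(4+2\varepsilon)(1-\alpha)$ on the left versus $(40/9 + 2\varepsilon)(1-\alpha)$ on the right, and $4 < 40/9$; the exponent of $T$ is $(2+\varepsilon)(1-\alpha)$ versus $(8/3 + (6/5)\varepsilon)(1-\alpha)$, and $2 < 8/3$. Thus on this range Lemma~\ref{l:4.17} in fact yields a strictly stronger estimate than required.

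There is no substantive obstacle here; the excerpt itself calls the deduction ``clear''. The only point requiring any attention is converting the additive $\varepsilon$ of Lemma~\ref{l:4.16} into a multiplicative one in the exponent, which is achieved precisely by the choice of the splitting point $\alpha = 4/5$ at which the sharper Lemma~\ref{l:4.17} becomes available.
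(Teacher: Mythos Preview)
Your proof is correct and is exactly the argument the paper has in mind: the text simply states that Lemmas~\ref{l:4.16} and~\ref{l:4.17} ``clearly imply'' Lemma~\ref{l:4.18} without giving further detail, and your split at $\alpha=4/5$ together with the observation that $1-\alpha\ge 1/5$ absorbs the additive~$\varepsilon$ is precisely what is needed.
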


The following two ``log-free'' density theorems were proved \cite[Corollary 1 and Theorem 2]{Pin}.

\begin{Lem}
\label{l:4.19}
For $h < 1/5$ we have
\begin{equation}
N^*(1-h, T, Q) \ll_\varepsilon \left(Q^{(3+\varepsilon)(3-4h)
\over 4(1-4h)(1-2h)} T^{3+\varepsilon \over 2(1 - 4h)}\right)^h.
\label{eq:4.17}
\end{equation}
\end{Lem}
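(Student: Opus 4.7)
My plan is to prove this log-free density estimate by combining a zero-detection identity with mean value theorems for Dirichlet polynomials twisted by primitive characters, optimized so that no factor of $\log QT$ appears outside the exponent.

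First, I would set up the zero-detector. Fix a primitive character $\chi \mod q$, $q \leq Q$, and a zero $\varrho = 1 - h' + i\gamma$ of $L(s,\chi)$ with $h' \leq h$, $|\gamma| \leq T$. Using the approximation $M(s,\chi) = \sum_{n \leq Y} \mu(n)\chi(n) n^{-s}$ to $1/L(s,\chi)$ together with a smooth Mellin kernel $\widehat{w}(s)$ with compactly supported $w$, I would derive
\[
1 \;\ll\; \bigl|\, P_\chi(\varrho)\,\bigr|^2, \qquad
P_\chi(s) = \sum_{N < n \leq 2N} a_n \chi(n)\, n^{-s},
\]
for a suitable dyadic range $N \in [Y, (QT)^A]$ determined by the zero. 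Here the coefficients $a_n$ are bounded divisor-type quantities, and the smooth truncation in place of a Perron integral is what removes the logarithmic loss---this is the Linnik/Jutila-style log-free mechanism.

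Second, I would apply a mean value estimate for Dirichlet polynomials over primitive characters. The classical large sieve gives
\[
\sum_{q \leq Q}\, \sideset{}{^*}\sum_{\chi \mod q}\, \int_{-T}^{T} |P_\chi(\tfrac12 + it)|^{2}\, dt \;\ll\; (N + Q^2 T)\, \|a\|_2^2,
\]
and a fourth-power version (which is what forces the $3$'s and the factor $T^{6/5}$ in Lemmas~\ref{l:4.15a}--\ref{l:4.18} to mutate here) yields the stronger bound
\[
\sum_{q \leq Q}\, \sideset{}{^*}\sum_{\chi \mod q}\, \int_{-T}^{T} |P_\chi|^{4}\, dt \;\ll\; (N^2 + Q^2 T N)\, N^{\ve}.
\]
Combining these via the Halász--Montgomery inequality and the Turán power sum method, the number of $(\chi,\varrho)$ with $|P_\chi(\varrho)| \gg N^{h'}$ is controlled by a polynomial expression in $N$, $Q^2T$, and $h'$.

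Third, I would optimize over the parameters. Using the reflection $\beta \mapsto 1-\beta$ together with the four-moment estimate above for a $k$-fold product $P_\chi^k$ with $k$ chosen depending on $h$, one gets an inequality of the form
\[
N^*(1-h, T, Q) \;\ll\; \bigl(Q^2\, T^{3+\ve}\bigr)^{ah} \cdot N^{bh(1-4h)(1-2h)^{-1}},
\]
after which the dyadic length $N$ is balanced against $Y$ (and $Y$ against $(QT)^{1/(2(1-2h))}$, the Vinogradov range), producing the claimed exponents
$\frac{(3+\ve)(3-4h)}{4(1-4h)(1-2h)}$ on $Q$ and $\frac{3+\ve}{2(1-4h)}$ on $T$.

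The main obstacle will be making the argument genuinely \emph{log-free}: any use of a sharp Perron formula would introduce a $\log QT$ per zero and destroy the bound in the relevant application (where $h \to 0$ and the total count is $O(1)$). The key technical point is therefore to run the zero-detector with a smooth weight whose Mellin transform decays rapidly, and to absorb the resulting smoothing errors into the $\ve$ in the exponent rather than into an external log. A secondary difficulty is the correct handling of imprimitive inductions, which is resolved by restricting to the starred sum over primitive characters and using Lemma~\ref{l:4.4} to control induced contributions.
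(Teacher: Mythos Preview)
The paper does not prove Lemma~\ref{l:4.19}; it is quoted verbatim as Corollary~1 of the companion manuscript \cite{Pin} (``Some new density theorems for Dirichlet $L$-functions''). So there is no in-paper argument to compare your proposal against, and any assessment of your sketch has to be on its own merits rather than relative to what the author wrote here.

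On those merits: your outline is in the right genre --- zero-detecting Dirichlet polynomials, mean values over primitive characters, and a smooth Mellin weight to avoid Perron losses are exactly the ingredients that go into log-free density theorems in the Linnik--Jutila--Heath-Brown tradition. But the proposal as written does not actually derive the stated exponents; the sentence ``producing the claimed exponents $\frac{(3+\ve)(3-4h)}{4(1-4h)(1-2h)}$ on $Q$ and $\frac{3+\ve}{2(1-4h)}$ on $T$'' is an assertion, not a computation, and the preceding displayed inequality with unspecified $a,b$ gives no mechanism by which these particular rational functions of $h$ emerge. The appearance of the Tur\'an power-sum method is also not motivated by anything earlier in your sketch. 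Finally, your last sentence invokes Lemma~\ref{l:4.4} to ``control induced contributions'', but Lemma~\ref{l:4.4} in this paper evaluates the twisted Ramanujan sum $c_\chi(m)$ and has nothing to do with passing between primitive and imprimitive characters in a density estimate; that step is handled simply by the definition of $N^*$ as a starred sum. If you want to supply a genuine proof here, you will need to carry out the optimization explicitly and track how the fourth-moment input and the choice of mollifier length $Y$ combine to give the exponent $\frac{(3+\ve)(3-4h)}{4(1-4h)(1-2h)}$.
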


\begin{Lem}
\label{l:4.20}
Let $\mathcal H$ be a set of primitive characters $\chi$ with
moduli $\leq M$, such that $\mathrm{cond}\, \chi_i \overline{\chi_j}
\leq K$ for any pair $\chi_i, \chi_j$ belonging to $\mathcal H$. Let
$\mathcal S$ be a set of distinct pairs $(\chi_j, \varrho_j)$ with
$L(\varrho_j, \chi_j) = 0$ where $\chi_j \in \mathcal H$, $\beta_j
\geq 1 - h$, $|\gamma_j|\leq T$. ($\chi_i = \chi_j$ is possible, if
$\varrho_i \neq \varrho_j$.)
If $\varepsilon$ is a sufficiently small positive constant, $h <
\varepsilon^3$ then we have for any $K \geq 1$, $M \geq 1$, $T
\geq 2$
\begin{equation}
|\mathcal S| \ll_\varepsilon \big(K^2(MT)^{3/4}\big)^{(1 + \varepsilon)h},
\label{eq:4.18}
\end{equation}
and
\begin{equation}
|\mathcal S| \ll_\varepsilon (K^2 M^2 T^\varepsilon)^{(1 + \varepsilon)h}.\hspace*{6mm}
\label{eq:4.18a}
\end{equation}
\end{Lem}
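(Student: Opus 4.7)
The plan is to use the classical zero-detection machinery combined with a hybrid mean-value theorem for Dirichlet polynomials, adapted to exploit the pairwise conductor bound on $\mathcal H$. First, applying Lemma~\ref{l:4.15} (or, equivalently, a contour shift of $(L'/L)(s,\chi) x^{s}/s$) together with a Selberg-type mollifier $M_y(s,\chi) = \sum_{n\leq y}\mu(n)\chi(n)n^{-s}$, the standard argument reduces the problem to the following: for each $(\chi_j,\varrho_j)\in\mathcal S$ there is a dyadic level $N \leq (MT)^{O(1)}$ and a coefficient sequence $a_n$ of size $\leq n^{\varepsilon}$ (essentially independent of $\chi_j$) for which
\[
1 \ll_\varepsilon \bigg|\sum_{n\sim N} a_n \chi_j(n) n^{-\varrho_j}\bigg| N^{-(1-\beta_j)} + (\text{negligible error}).
\]

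Next I would raise this inequality to an even power $2k$ with $k=\lfloor C/h\rfloor$ for a large constant $C$, sum over $\mathcal S$, and reduce to estimating
\[
\Sigma(N) := \sum_{(\chi_j,\varrho_j)\in\mathcal S}\,\bigg|\sum_{n\sim N} a_n \chi_j(n) n^{-i\gamma_j}\bigg|^{2k}.
\]
Expanding the $2k$-th power introduces products $\chi_i\overline{\chi_j}$ of pairs of characters from $\mathcal H$. The crucial point is that by hypothesis each such product is induced by a primitive character of conductor $\leq K$, so after orthogonality the inner sum over $n$-tuples essentially runs over residues modulo a divisor of $K$ rather than modulo $M$; this replaces the $M^{2}$ that would appear in a naive large sieve by $K^{2}$. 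Combining with the usual hybrid spacing estimate for the heights $\gamma_j\in[-T,T]$ yields a bound of the shape $\Sigma(N) \ll_\varepsilon (K^{2} T^{\kappa} N^{k})^{1+\varepsilon}$ for an appropriate exponent $\kappa$.

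Two different choices of $\kappa$ then produce the two bounds. Using a fourth-moment estimate in the spirit of Huxley/Heath-Brown (Plancherel against the $t$-variable on $[-T,T]$ with the $n$-coefficients squared) gives $\kappa = 3/4$, and after optimising the mollifier length $y$ and the dyadic level $N$ against $h$ one obtains $|\mathcal S|\ll_\varepsilon (K^{2}(MT)^{3/4})^{(1+\varepsilon)h}$. The second bound comes from a plainer second-moment argument in which orthogonality on the characters contributes $K^{2}$ on the diagonal and $M^{2}$ off-diagonal, combined with the trivial $T^{\varepsilon}$ from the small-$h$ diagonal in the frequencies. The high power $k\sim 1/h$ absorbs any $\log$ losses into the factor $(1+\varepsilon)h$ in the exponent, which is exactly where the hypothesis $h<\varepsilon^{3}$ is used.

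The main obstacle is the treatment of clustered zeros: zeros with $|\gamma_i-\gamma_j|=O(1)$ or attached to the same character $\chi$ are not separated enough for a direct mean-value argument. A Montgomery/Jutila-type pigeonhole step reduces to a well-spaced subfamily, but one must then verify that the pairwise conductor bound on $\mathcal H$ survives this reduction. A secondary technical point is that a naive zero-detection polynomial has length only $T^{O(1)}$, so for the $M^{3/4}$ dependence in the first bound one has to feed in a Vaughan-type decomposition of $L'/L$ to get a Dirichlet polynomial of length up to $M$. Everything else is polynomial book-keeping in the exponents.
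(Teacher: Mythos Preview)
The paper does not prove Lemma~\ref{l:4.20}; it is quoted without proof as Theorem~2 of the external reference \cite{Pin} (``Some new density theorems for Dirichlet $L$-functions'', submitted). Consequently there is no proof in the present paper against which your proposal can be compared.

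That said, a brief comment on your sketch. The overall architecture --- zero-detecting polynomial, high moment with exponent $k\asymp 1/h$ to kill logarithms (the Linnik--Jutila device), and exploitation of the pairwise conductor bound $\mathrm{cond}\,\chi_i\overline{\chi_j}\le K$ to replace the naive $M^2$ large-sieve contribution by $K^2$ --- is the right shape for a log-free density theorem of this type. However, several steps are not pinned down. The mechanism by which orthogonality converts $M^2$ into $K^2$ is asserted but not explained: expanding a $2k$-th power produces products $\chi_{j_1}\overline{\chi_{j_2}}\cdots$, and one needs a genuine large-sieve or duality inequality over $\mathcal H$ whose diagonal is governed by $K$ rather than $M$; simply noting that each pairwise product has small conductor does not by itself deliver this without an additional argument (e.g.\ Hal\'asz--Montgomery duality with the pseudocharacters $\chi_i\overline{\chi_j}$). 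Likewise the origin of the exponents $3/4$ versus $2$ in $M$ and the $T^{3/4}$ versus $T^{\varepsilon}$ is left at the level of ``a fourth-moment estimate'' and ``a plainer second-moment argument'', which is not enough to recover the precise constants in \eqref{eq:4.18}--\eqref{eq:4.18a}. Finally, your remark that a Vaughan-type decomposition of $L'/L$ is needed to obtain polynomial length up to $M$ is non-standard for zero-detection; the usual route is a mollified zero-detector of length a power of $MT$, not a combinatorial identity. So the proposal is a reasonable outline of the genre, but it is not yet a proof, and in any case the paper defers the argument entirely to \cite{Pin}.
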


Finally the following version of the Deuring--Heilbronn
phenomenon, proved in \cite[Theorem 4]{Pin} will be needed in
case of existence of a Siegel zero (see Section~\ref{s:11}).

\begin{Lem}
\label{l:4.21}
Let $\chi_1$ and $\chi_2$ be primitive characters $\mod q_1$ and
$q_2$, resp., with $L(1 - \delta_1, \chi_1) = L(1
- \delta + i \gamma, \chi_2) = 0$, where $\chi_1, \delta_1$ are real,
$\delta_1 < \delta < 1/7$. Let $k = \mathrm{cond}\, \chi_1
\overline \chi_2$, $\varepsilon > 0$, arbitrary,
\begin{equation}
Y = \big(q^2_1 q_2 k(|\gamma|+2)^2\big)^{3/8} \geq Y_0(\varepsilon)
\label{eq:4.19}
\end{equation}
sufficiently large. Then we have
\begin{equation}
\delta_1 \geq (1 - \varepsilon)(1 - 6\delta)\log 2 \cdot
Y^{-(1+\varepsilon)\delta/(1-6\delta)} / \log Y.
\label{eq:4.20}
\end{equation}
\end{Lem}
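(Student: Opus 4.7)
The plan is to prove this log-free version of the Deuring--Heilbronn phenomenon by combining the classical non-negativity argument (enabled by $\chi_1$ being real) with the log-free density estimate of Lemma~\ref{l:4.20}, following a Halász--Montgomery style approach rather than the classical Knapowski--Turán power-sum method, which would only yield a bound with $\log Y$ in the exponent.

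First I would build a non-negative test weight exploiting the reality of $\chi_1$. For $x$ of order $Y$ consider
\[
\Sigma(x) = \sum_{n \leq x} \Lambda(n)\bigl(1 + \chi_1(n)\bigr)\bigl(1 + \mathrm{Re}\,\chi_2(n) n^{-i\gamma}\bigr) \geq 0,
\]
which is non-negative because $\chi_1$ is real so $1+\chi_1(n)\geq 0$, and $|\chi_2(n)n^{-i\gamma}|\leq 1$. Expanding the brackets produces a combination of the five sums $\psi(x,\chi)$ with $\chi \in \{\chi_0, \chi_1, \chi_2, \overline{\chi_2}, \chi_1\chi_2, \chi_1\overline{\chi_2}\}$, whose conductors are controlled by $q_1, q_2$ and $k=\mathrm{cond}\,\chi_1\overline{\chi_2}$.

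Next I would apply Lemma~\ref{l:4.15} to each piece. The pole of $\zeta$ furnishes a main term of size $x$; the Siegel zero $1-\delta_1$ of $L(s,\chi_1)$ contributes $-x^{1-\delta_1}/(1-\delta_1)$; and the pair $1-\delta\pm i\gamma$ of zeros of $L(s,\chi_2)$ contributes oscillatory terms of modulus at most $2x^{1-\delta}/|1-\delta+i\gamma|$. The non-negativity $\Sigma(x)\geq 0$ then yields an inequality schematically of the form
\[
x - \frac{x^{1-\delta_1}}{1-\delta_1} \;\geq\; O\bigl(x^{1-\delta}\bigr) \;+\; \Sigma_{\mathrm{err}}(x),
\]
where $\Sigma_{\mathrm{err}}$ collects the contributions of all other non-trivial zeros of the five involved $L$-functions. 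Rewriting the left-hand side as $x\bigl(\delta_1 \log x + O((\delta_1\log x)^2)\bigr)$ converts this into the desired lower bound for $\delta_1$, provided $\Sigma_{\mathrm{err}}$ is small enough.

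The crucial step is bounding $\Sigma_{\mathrm{err}}$ in a log-free manner. Here I would invoke Lemma~\ref{l:4.20} applied to the family $\mathcal{H}$ formed by the six characters above, whose pairwise products have conductors controlled by $q_1, q_2, k$, together with a dyadic decomposition in the height $T$ and in the ``depth'' $h_j = 1 - \beta_j$ of the zeros, summed via Abel summation. The bound $|\mathcal S| \ll (K^2(MT)^{3/4})^{(1+\varepsilon)h}$ with $K, M, T$ controlled by $q_1^2 q_2 k(|\gamma|+2)^2$ produces a factor $Y^{(1+\varepsilon)h}$ and recovers $\Sigma_{\mathrm{err}}(x)\ll x^{1-\delta(1+\varepsilon)/(1-6\delta)}$, the factor $1-6\delta$ arising because the density estimate places the worst-case zeros near $\beta = 1-\delta$, and summing $x^{\beta}$ times the zero count optimizes precisely at this exponent. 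Choosing $x\asymp Y$ and inserting the value of $Y$ from \eqref{eq:4.19} gives the claim, with the constant $\log 2$ appearing from the evaluation of the Mellin transform of the sharp cut-off (essentially from $\int_Y^{2Y} x^{-\delta_1}\,dx/x$).

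The main obstacle is the log-free aspect: any classical density bound of Montgomery or Jutila type (Lemmas~\ref{l:4.15a}--\ref{l:4.18}) would introduce an extra $\log Y$ factor in the exponent of $Y^{-\delta/(1-6\delta)}$, destroying the precision needed to deduce Theorem~\ref{t:2}. It is precisely the $3/4$-exponent in Lemma~\ref{l:4.20} and its pair-character conductor factor $K^2$ that permits the sharp exponent here, with the constraint $\delta < 1/7$ being dictated by the requirement $1-6\delta > 0$ and by the applicability range $h < \varepsilon^3$ of the log-free density estimate.
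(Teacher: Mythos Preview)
The paper does not prove Lemma~\ref{l:4.21} at all; it is quoted verbatim as Theorem~4 of the external reference~[Pin] (``Some new density theorems for Dirichlet $L$-functions''), so there is no in-paper argument to compare your proposal against.

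Your sketch has the right architecture for a Deuring--Heilbronn result (non-negativity from the reality of $\chi_1$, explicit formula, log-free control of the remaining zeros), and this is very likely the skeleton of the argument in~[Pin]. However, several of your quantitative claims do not check out. If you feed your six-character family into Lemma~\ref{l:4.20}, the quantity $K^2(MT)^{3/4}$ does not collapse to $\bigl(q_1^2 q_2 k(|\gamma|+2)^2\bigr)^{3/8}$ under any natural assignment of $K$, $M$, $T$; the pairwise conductors among $\chi_0,\chi_1,\chi_2,\overline{\chi_2},\chi_1\chi_2,\chi_1\overline{\chi_2}$ can be as large as $q_1q_2$, and the exponents simply do not match. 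Your account of where $\log 2$ comes from (``the Mellin transform of the sharp cut-off'') cannot be right either: the sharp-cutoff explicit formula of Lemma~\ref{l:4.15} carries an error term $O(\sqrt{x}\log^2 qx)$ which is not log-free and would swamp the delicate gain you need. Likewise, the factor $1-6\delta$ is asserted rather than derived; it does not arise from a straightforward optimization of $x^\beta$ against the density bound as you describe.

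The genuine missing idea is the choice of a smooth test weight (or, equivalently, a Tur\'an power-sum kernel) whose Mellin transform decays fast enough to make the zero-sum log-free, and which is tuned so that the balance between the density exponent and the $x$-power produces exactly $3/8$ in the definition of $Y$ and $(1-6\delta)^{-1}$ in the exponent. Without that weight your outline remains a plausibility argument rather than a proof with the stated constants.
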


\section{Minor arcs}
\label{s:5}

The treatment of the minor arcs is completely standard. We will
use the estimate of Vaughan (Lemma~\ref{l:4.10}) on the minor
arcs. This determines the value $3/5$ in our Theorems~\ref{t:2}
and \ref{t:3}.

Using Parseval's identity we obtain from (\ref{eq:1.10}) and
Lemma~\ref{l:4.10}:
\begin{align}
\sum_m R^2_2(m)
&= \int\limits_{\mathfrak m} |\mathcal S^4(\alpha)|d\alpha \label{eq:5.1}\\
&\leq (\max_{\mathfrak m} |\mathcal S(\alpha)|)^2
\int\limits^1_0 |\mathcal S(\alpha)|^2 d\alpha \ll
\max \left({X^2\over P}, X^{8\over 5}\right) X\mathcal L^9.
\nonumber
\end{align}

This result shows that for $m \leq X$ we have
\begin{equation}
|R_2(m)| \leq {X \over \sqrt{\mathcal L}} \ \text{ with }
\ll \mathcal L^{10} \max \left({X \over P}, X^{3/5}\right)
\text{ exceptions,}
\label{eq:5.2}
\end{equation}
\begin{equation}
|R_2(m)| \leq X^{1-\varepsilon} \ \text{ with }
\ll_\varepsilon \max \left({X^{1+3\varepsilon} \over P}, X^{3/5
+ 3\varepsilon}\right)
\text{ exceptions.}
\label{eq:5.3}
\end{equation}

The first inequality will be used if we have no Siegel zero, the
second if we have one. As we can see, the exact choice of $P$
will be irrelevant in (\ref{eq:5.2})--(\ref{eq:5.3}) if we can
choose $P \geq X^{2/5}$ (which will be the case in many
applications).

\section{Basic results about major arcs. Dissection of~{\boldmath$S(\alpha)$}}
\label{s:6}

We will follow \cite{MV} but extend their arguments beyond the Siegel zero to zeros near to
$\sigma = 1$ as well. For $\alpha \in \mathfrak M(q,a)$ let
$\alpha = a / q + \eta$. By $P < X_1$ we have
\begin{equation}
S(\alpha) = {1 \over \varphi(q)} \sum_{\chi(q)}
\chi(a)\tau(\overline \chi)
 S(\chi , \eta) = {1
\over \varphi(q)} \sum_{\chi(q)} \chi (a)\tau(\overline \chi) S(\chi^*, \eta)
\label{eq:6.1}
\end{equation}
where $\chi \mod q$, $q \leq P$ is induced by the primitive character~$\chi^*$, and $S(\chi, \eta)$
is defined by
\begin{equation}
S(\chi, \eta) = \frac1{\varphi (q)} \sum_{X_1 < p \leq X} \chi(p) \log p e(\eta p).
\label{eq:6.1a}
\end{equation}

Using the (unusual) notation of Section~1, we can separate from
$S(\chi^*,\eta)$ the effect of the main term $T_0(\eta)$ `caused' by
the pole of $L(s, \chi_0) = \zeta(s)$ at $s = 1$ and that
of the zeros $\varrho$ lying near to $\sigma = 1$ (for all
$L(s, \chi)$).
Up to the different sign $A(\varrho)$ (see
(\ref{eq:1.26})--(\ref{eq:1.27})) their treatment will be the
same. Accordingly we write
\begin{equation}
S_1(\alpha) = S(\alpha) - S_0(\alpha), \quad
S_0(\alpha) = S_2(\alpha) + S_3(\alpha),
\label{eq:6.2}
\end{equation}
where we define $S_2(\alpha)$ and $S_3(\alpha)$ (and thus
$S_1(\alpha)$ and $S_0(\alpha)$) through (\ref{eq:6.1}) and
$S_i(\chi, \eta)$ $(0 \leq i \leq 3)$ by
\begin{equation}
\aligned
S_2(\chi^*,\eta) &= \sum_{\substack{\varrho = \varrho_\chi \\ H/\mathcal L
< \delta \leq b,\ |\gamma| \leq \sqrt{X}}} A(\varrho)
T(\varrho,\eta), \\
S_3(\chi^*,\eta) &= \sum_{\substack{\varrho = \varrho_\chi \\ 0 \leq \delta
\leq H/\mathcal L,\ |\gamma| \leq \sqrt{X}}} A(\varrho) T(\varrho,
\eta)
\endaligned
\label{eq:6.3}
\end{equation}
where in case of the principal character the pole $\varrho = 1$ with
$A(\varrho) = 1$ is included, $b = b(\eta_0)$ is a small
constant, and for a zero $\varrho$ we have $A(\varrho) = -1$. We remark
that $S_i(\chi, \eta) = S_i(\chi^*, \eta)$. Then we have
\begin{align}
&\sum_{q \leq P} \underset{a(q)}{\sum\nolimits^\prime}\,
\int\limits_{\mathfrak M_{(q, a)}} S^2(\alpha) e(-m\alpha) d\alpha \label{eq:6.4} \\
&= \! \sum_{q \leq P} \underset{a(q)}{\sum\nolimits^\prime}
\sum_{\chi(q)} \sum_{\chi'(q)} \! {\chi\chi'(a)\tau(\overline \chi) \tau(\overline
\chi') e(-a m/q) \over \varphi^2(q)}\! \int\limits^{1/q Q}_{-1/q Q}\!\!
S(\chi, \eta) S(\chi',\eta) e(-m\eta) d\eta \nonumber \\
&=\sum_{q\leq P} \sum_{\chi(q)} \sum_{\chi'(q)}
{c_{\chi\chi'}(-m)\tau(\overline \chi) \tau (\overline \chi')\over
\varphi^2(q)} \int\limits^{1/q Q}_{-1/q Q} S(\chi,\eta) S(\chi', \eta)
e(-m\eta) d\eta \nonumber\\
&\overset{\rm def}{=} \underset{\substack{\chi\\ r(\chi) \leq P}}{\sum\nolimits^*}
\underset{\substack{\chi' \\ r(\chi') \leq P}}{\sum\nolimits^*}
\sum_{\substack{q \leq P
\\ [r(\chi), r(\chi')] \mid q}} c(\chi, \chi', q, m) \int\limits^{1/q
Q}_{-1/q Q} S(\chi,\eta) S(\chi', \eta) e(-m\eta) d\eta.
\nonumber
\end{align}

Naturally the same formula holds if we replace $S(\alpha)$ and
$S(\chi, \eta)$ by $S_i(\alpha)$ and $S_i(\chi, \eta)$, respectively
$(0 \leq i \leq 3)$.

The estimate of these integrals will be performed by the aid of Gal\-lagh\-er's Lemma \ref{l:4.11} through the estimates of the quantities ($\chi$ primitive $\mod r$)
\beq
\label{eq:6.5}
W_i(\chi) := \biggl(\int\limits_{-1/rQ}^{1/rQ} \bigl|S_i(\chi, \eta)\bigr|^2 d\eta \biggr)^{1/2}.
\eeq

\section{Main Lemma. Supplementary singular series}
\label{s:7}

Using the notation of Sections~\ref{s:2} and \ref{s:3} we can
formulate and prove our

\begin{MLem}
Suppose we have two primitive characters $\chi_1 \mod r_1$, $\chi_2
\mod r_2$, $q_0 = [r_1, r_2]$, $q_1 = q_0/(q_0, |m|)$, $\ell_i
= r_i / (r_1, r_2)$, $e = (m, (r_1, r_2))$. Let $\chi^* = (\chi_1
\chi_2)^*$, $\mathrm{cond}\, \chi_1 \chi_2 = r^* = r' \ell_1 \ell_2$,
$b(q) = c(\chi_1, \chi_2, q, m)$,
\begin{equation}
f = \prod_{\substack{p^\alpha \| (r_1, r_2) \\ p\mid m}} p^\alpha,\quad  d =
\prod_{\substack{p^\alpha \|(r_1, r_2) \\ p \nmid m}} p^\alpha,
\label{eq:7.1}
\end{equation}
\begin{equation}
{\mathfrak S}(\chi_1, \chi_2, m) = \sum^\infty_{t = 1} b(q_0 t), \quad
A(\chi_1, \chi_2, m) = \sum^\infty_{t = 1} |b(q_0 t)|.
\label{eq:7.2}
\end{equation}

Suppose $A(\chi_1, \chi_2, m) \neq 0$. Then
\begin{equation}
b(q_0) \neq 0, \quad r' \mid {df \over e} = \frac{(r_1, r_2)}{e},
\label{eq:7.3}
\end{equation}
\begin{equation}
b(q_0) = {\tau(\overline \chi_1) \tau (\overline \chi_2) \tau (\chi^*)
\overline\chi^* \left({-m \over (q_0, |m|)}\right) \mu \left({q_1 \over
r^*}\right)  \chi^* \left( q_1 \over r^*\right) \mu(\ell_1)
\mu(\ell_2) \overline \chi_1(\ell_2) \overline \chi_2(\ell_1) \over
\varphi^2(\ell_1) \varphi^2(\ell_2) \varphi(d) \varphi(f) \varphi(df/e)},
\label{eq:7.4}
\end{equation}
\begin{equation}
|b(q_0)| = {\ell_1 \over \varphi^2(\ell_1)} \cdot {\ell_2\over
\varphi^2(\ell_2)} \cdot{d \over \varphi(d)} \cdot {f\over
\varphi(f)} {\sqrt{r'} \over \varphi(df/e)},
\label{eq:7.5}
\end{equation}
\begin{equation}
{\mathfrak S}(\chi_1, \chi_2, m) = b(q_0) \prod_{\substack{p\nmid m\\ p\nmid [r_1,
r_2]}} \left(1 - {1 \over (p-1)^2} \right) \prod_{\substack{p \mid m \\
p \nmid [r_1, r_2]}} \left(1 + {1 \over (p - 1)} \right),
\label{eq:7.6}
\end{equation}
\begin{equation}
|{\mathfrak S}(\chi_1, \chi_2, m)| \leq {\mathfrak S}(m), \quad
|A(\chi_1, \chi_2, m)| \leq B \cdot |{\mathfrak S}(\chi_1, \chi_2, m)|
\label{eq:7.7}
\end{equation}
with the constant $B = \prod\limits_{p > 2} (1 + 2/(p(p - 2)))$.
Further $|{\mathfrak S}(\chi_1, \chi_2, m)| \leq$ \hbox{$(\sqrt{3}/2) {\mathfrak S}(m)$}
unless the following five relations all hold:
\begin{equation}
{r_i \over (r_i,m)} \mid 36 \  (i = 1,2), \quad
{r_i\over (r_1, r_2)} \mid 3 \  (i = 1,2), \quad r^*\mid 36.
\label{eq:7.8}
\end{equation}
\end{MLem}

In case of the Generalized Twin Prime Problem (see
(\ref{eq:2.4})--(\ref{eq:2.5})) nearly everything remains unchanged.

\begin{MLemm}
If we replace ${\mathfrak S}(\chi_1, \chi_2, m)$ by ${\mathfrak S}'(\chi_1, \chi_2, m)$,
$A(\chi_1, \chi_2, m)$ by the analogous $A'(\chi_1, \chi_2, m)$, and
$\chi^* = (\chi_1 \chi_2)^*$ by $(\chi_1 \overline \chi_2)^*$ then the results
of the Main Lemma~1 hold with the only change that
$\tau(\overline \chi_2)$ and $\overline \chi_2(\ell_1)$ in
{\rm (\ref{eq:7.4})} are to be replaced by $\overline{\tau(\overline
\chi_2)}$ and $\chi_2(\ell)$, respectively.
\end{MLemm}

\begin{MCor}
For the singular series ${\mathfrak S}(\chi_1, \chi_2,m)$ the inequality {\rm
(\ref{eq:1.21})} holds.
\end{MCor}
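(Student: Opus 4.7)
The plan is to reduce inequality \eqref{eq:1.21} to a prime-by-prime computation by exploiting the explicit factorizations \eqref{eq:7.5} and \eqref{eq:7.6} already established in the Main Lemma.

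First I would divide $\mathfrak S(\chi_1,\chi_2,m)$ by $\mathfrak S(m)$. The Euler product in \eqref{eq:7.6} runs only over primes $p\nmid[r_1,r_2]$, and its local factors coincide exactly with those of $\mathfrak S(m)$ at such primes. Hence
\[
\frac{|\mathfrak S(\chi_1,\chi_2,m)|}{\mathfrak S(m)} \;=\; \frac{|b(q_0)|}{\displaystyle\prod_{p\mid[r_1,r_2]} \lambda_p(m)},
\]
where $\lambda_p(m)=1+1/(p-1)$ if $p\mid m$ and $\lambda_p(m)=1-1/(p-1)^{2}$ otherwise. Using \eqref{eq:7.5}, the right-hand side factors as a product over primes $p\mid[r_1,r_2]$ of elementary local contributions $\xi_p$ whose shape depends on which of the sets $\{\ell_1,\ell_2,d,f/e,e,r'\}$ contain $p$. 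A case analysis (there are only a handful of configurations, determined by whether $p$ divides each of $r_1,r_2,m$ and by which exact power) shows $\xi_p\le 1$ in every case, thereby recovering the qualitative bound \eqref{eq:7.7}, and, more importantly, it pins down the size of $\xi_p$ in terms of $p$.

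Next I would extract the square-root savings. The crucial source is the factor $\sqrt{r'}/\varphi(df/e)$ in \eqref{eq:7.5}: at each prime $p\mid r'$ it contributes $\sqrt{p}$ to the numerator and at most a factor $\ll 1$ to the denominator, whereas the $\ell_i/\varphi^{2}(\ell_i)$ factors behave like $1/p$ at each $p\mid\ell_i$ once divided by $\lambda_p(m)$. Tracking these contributions one proves prime-by-prime inequalities of the shape
\[
\prod_{p\mid\ell_i}\xi_p \;\ll\; \frac{(\log_2\ell_i)^{c}}{\ell_i},\qquad \prod_{p\mid r'}\xi_p \;\ll\; \frac{(\log_2 r^{*})^{c}}{\sqrt{r'}},
\]
and the analogous saving in $\sqrt{r_i/(|m|,r_i)}$ from the primes of $r_i$ not supported on $\ell_1\ell_2$. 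Taking the max over the five quantities appearing in the definition \eqref{eq:1.22} of $U$, the dominant quantity forces at least a $1/\sqrt{U}$ factor, and the lossy divisor-like sums giving rise to $(\log_2 U)^{2}$ arise from the standard bound $\sigma_{-1}(n)\ll \log_2 n$ absorbing the $p/(p-1)$-type distortions at small primes.

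The main obstacle is the bookkeeping: at a single prime $p\mid[r_1,r_2]$ several of the factors in \eqref{eq:7.5} can contribute simultaneously, and one must verify that the \emph{ratio} $\xi_p/\lambda_p(m)$ is small in every configuration — in particular at $p\in\{2,3\}$, where $p-1$ is small and the exceptional five-fold condition \eqref{eq:7.8} warns that cancellations are tightest. I would handle these small primes separately, using that their contribution is bounded by an absolute constant, which is then absorbed into the $\log_2^{2}U$ factor. Combining the local bounds gives \eqref{eq:1.21}.
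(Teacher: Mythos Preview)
Your plan is correct and is exactly the approach the paper intends: the paper's own proof consists of the single sentence ``The corollaries easily follow by (7.5) from the Main Lemmas 1 and 1'{}'', and your prime-by-prime analysis of the ratio $|b(q_0)|/\prod_{p\mid[r_1,r_2]}\lambda_p(m)$ via the local factors $\xi_p$ already computed in cases (i)--(iii) of the Main Lemma is precisely how one cashes this out. The only minor remark is that you do not need to redo the case analysis from scratch: the explicit values of $\xi_p$ in \eqref{eq:7.15a}, \eqref{eq:7.16}, \eqref{eq:7.16a} already display the factors $1/(p-2)$, $p^{1+\beta/2-\alpha}/(p-2)$, and $p^{\beta/2}/\varphi(p^\alpha)$, from which the savings in $\ell_i$, $df/e$, and $\sqrt{r'}$ (hence in each of the five quantities defining $U$) are read off directly, with the $n/\varphi(n)\ll\log_2 n$ bound supplying the $\log_2^2 U$ loss.
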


\begin{MCoro}
Let us replace ${\mathfrak S}(\chi_1, \chi_2, m)$ by\break
${\mathfrak S}'(\chi_1, \chi_2, m)$
in {\rm (\ref{eq:1.21})} and $\mathrm{cond}\, \chi_1 \chi_2$ by
$\mathrm{cond}\, \chi_1 \overline \chi_2$ in {\rm (\ref{eq:1.22})}.
Then the inequality {\rm (\ref{eq:1.21})} remains valid.
\end{MCoro}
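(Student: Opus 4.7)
The plan is to obtain this as an immediate formal consequence of the Corollary to the Main Lemma~1, by exploiting the observation that the prescription of the Main Lemma~1' affects only complex phases. Concretely, I would introduce the primitive character $\tilde{\chi}_2 := \overline{\chi_2}$, whose modulus $r_2$, conductor, and greatest common divisor with $m$ all agree with those of $\chi_2$. The Main Lemma~1' states that the formula for $b'(q_0) := c'(\chi_1, \chi_2, q_0, m)$ differs from the Goldbach expression $b(q_0) = c(\chi_1, \tilde\chi_2, q_0, m)$ only by the substitutions $\tau(\overline{\chi_2}) \mapsto \overline{\tau(\overline{\chi_2})}$ and $\overline{\chi_2}(\ell_1) \mapsto \chi_2(\ell_1)$, both of which preserve absolute values. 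Hence $|b'(q_0)|$ is given by exactly the formula (7.5), with $r^* = \mathrm{cond}(\chi_1\overline{\chi_2})$, and the Euler-product factorization (7.6) (which persists under the substitutions of the Main Lemma~1') yields
\[
|\mathfrak{S}'(\chi_1, \chi_2, m)| = |\mathfrak{S}(\chi_1, \tilde\chi_2, m)|,
\]
since the remaining product over $p \nmid [r_1, r_2]$ depends only on $m$ and is independent of the characters.

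Having reduced to the Goldbach-type singular series for the pair $(\chi_1, \tilde\chi_2)$, I would then invoke the Corollary to the Main Lemma~1 to obtain
\[
|\mathfrak{S}(\chi_1, \tilde\chi_2, m)| \leq \frac{\mathfrak{S}(m)}{\sqrt{U(\chi_1, \tilde\chi_2, m)}} \log_2^2 U(\chi_1, \tilde\chi_2, m).
\]
The final step is to identify $U(\chi_1, \tilde\chi_2, m)$ with the quantity $U$ appearing in the present statement. Inspecting (1.22): four of the five ingredients depend only on $r_1$, $r_2$, $(|m|,r_1)$, $(|m|,r_2)$ and are therefore unchanged under $\chi_2 \mapsto \overline{\chi_2}$, while the fifth ingredient becomes $\mathrm{cond}(\chi_1\tilde\chi_2) = \mathrm{cond}(\chi_1\overline{\chi_2})$, which is precisely the substitution prescribed by the hypothesis. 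Combining these gives the desired bound.

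The main obstacle is essentially none: the argument is a formal reduction via complex conjugation of one character. The only substantive verification is that the involution $\chi_2 \mapsto \overline{\chi_2}$ preserves every invariant entering (7.5)–(7.8) and (1.22) apart from $\mathrm{cond}(\chi_1\chi_2)$, which is converted into $\mathrm{cond}(\chi_1\overline{\chi_2})$, exactly the substitution dictated by the statement of the Corollary. No new analytic input is needed beyond what the Main Lemma~1' and the Corollary to the Main Lemma~1 already provide.
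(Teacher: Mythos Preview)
Your proposal is correct and takes essentially the same approach as the paper, which simply notes that both corollaries ``easily follow by (7.5) from the Main Lemmas 1 and 1'\,''. Your observation that $|\mathfrak{S}'(\chi_1,\chi_2,m)| = |\mathfrak{S}(\chi_1,\overline{\chi_2},m)|$ is a clean way to organize this: rather than rerunning the derivation of the first Corollary with the primed quantities, you reduce to it directly via the involution $\chi_2 \mapsto \overline{\chi_2}$, which preserves all the arithmetic data $r_1, r_2, \ell_1, \ell_2, d, f, e$ entering (7.5)--(7.6) and converts $\mathrm{cond}\,\chi_1\chi_2$ into $\mathrm{cond}\,\chi_1\overline{\chi_2}$, exactly as required.
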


The corollaries easily follow by (\ref{eq:7.5}) from the Main
Lemmas 1 and 1'. Since the proof of Main Lemma 1' goes mutatis
mutandis, we will restrict ourselves to the proof of Main
Lemma~1.

\begin{Rem}
In case of $r_1 = r_2 = 1$, we clearly have the classical singular series:
$$
{\mathfrak S}(\chi_0, \chi_0, m) = {\mathfrak S}' (\chi_0, \chi_0, m) = {\mathfrak S}(m)
$$
from (\ref{eq:7.4}) and
(\ref{eq:7.6}).
\end{Rem}

\begin{proof}
Let us investigate an arbitrary non-zero term belonging to
$q = q_0 t = df \ell_1 \ell_2 t$ (with $\chi_0 =
\chi_{0,q}$)
\begin{equation}
b(q_0 t) = \varphi(q)^{-2} c_{\chi_1 \chi_2 \chi_0} (-m) \tau(\overline
\chi_1 \chi_0) \tau (\overline \chi_2 \chi_0) \neq 0.
\label{eq:7.9}
\end{equation}
Let $o_p(n) = \alpha$ if $p^\alpha \| n$. By
Lemma~\ref{l:4.2}, $\tau(\overline \chi_i \chi_0) \neq 0$ implies the relation $p \nmid (q/r_i)$
for $p \mid r_i$. Thus $o_p(r_i)
= o_p(q)$. So we have $(t, [r_1, r_2]) = 1$. For $p \mid
(r_1, r_2)$ we have by the above $o_p(r_1) = o_p(r_2)
= o_p([r_1, r_2]) = o_p(q)$.

If $p\mid r_i$, $p \nmid r_j$ (equivalently $p \mid \ell_i$)
then $\tau (\overline \chi_j \chi_0) \neq 0$ implies by Lemma~\ref{l:4.2}
that by the $\mu$-factor $1 =
o_p\left({q \over r_j}\right) = o_p(q) =
o_p(r_i)$. Similarly we have $|\mu(t)| = 1$.
 Summarizing the above we have
\begin{equation}
|\mu(\ell_1)| = |\mu(\ell_2)| = |\mu(t)| = 1, \quad
(t,q_0) = 1.
\label{eq:7.10}
\end{equation}
If $p\mid \ell_i$ then $o_p(q) = 1$ and $p\mid r^*$. This implies, in view of
(\ref{eq:7.9}), that by Lemma 4.4 we have $p|r^*| q/ (q, |m|)$
and so $p \nmid m$, that is $(m, \ell_i) = 1$ $(i = 1,2)$.
Hence, using the definitions of $d$, $e$, $f$, we have
\begin{equation}
(m,r_1) = (m, r_2) = (m, [r_1, r_2]) = (m, (r_1, r_2)) = (m, df)
= (m, f) = e.
\label{eq:7.11}
\end{equation}
Suppose $A(\chi_1, \chi_2, m) \neq 0$, equivalently there exists a $t$
with (\ref{eq:7.9}). Then, in view of $(t,r^*) = 1$ and
Lemma~\ref{l:4.4}, the equivalent assertions
\begin{equation}
r^* \Big|  {q_0 t\over (q_0 t, |m|)} \Longleftrightarrow r^*
\Big | {q_0 \over (q_0, |m|)} = \ell_1 \ell_2 d {f \over e}
\label{eq:7.12}
\end{equation}
are both true, thus $r' \mid df/e$. Let $j(q) = j_m(q) = {q
\over (q,|m|)}$. Then, by Lemmas \ref{l:4.2} and \ref{l:4.4}, we have
\begin{align}
b(q) ={}& {1 \over \varphi(q)} \cdot {1 \over \varphi(j(q))} \overline \chi^*
\left({-m \over (q, |m|)}\right) \mu\left({j(q) \over r^*}
\right) \chi^* \left({j(q) \over r^*} \right) \tau(\chi^*)\cdot \label{eq:7.13}\\
&
\mu(t\ell_2) \overline \chi_1(t\ell_2) \tau(\overline \chi_1)
\mu(t\ell_1) \overline \chi_2 (t\ell_1)\tau(\overline \chi_2),
\nonumber
\end{align}
where $q = q_0 t = q_0 hk$, $h = \prod\limits_{p\mid t, p\mid m}
p$, $k = \prod\limits_{p\mid t,p \nmid m} t$.
Taking $q = q_0$, that is, $t = 1$, we obtain (\ref{eq:7.4}).
Since $(q_0 hk, |m|) = h(q_0, |m|)$ we have $j(q_0 hk) = kj(q_0)
= kq_1$. Taking into account (\ref{eq:7.10}), we have in case of $b(q_0 t) \neq 0$ from (\ref{eq:7.13})
\begin{equation}
b(q_0 t) = b(q_0) {\chi^*(h)\mu(k) \chi^*(k)\overline \chi_1(kh)
\overline \chi_2(kh) \over \varphi^2(k) \varphi(h)} = b(q_0)
{\mu(k) \over \varphi^2(k)} \cdot {1 \over \varphi(h)}.
\label{eq:7.14}
\end{equation}
Now (\ref{eq:7.14}) shows (\ref{eq:7.6}). Further,
\begin{equation}
\aligned
\sum^\infty_{t=1} |b(q_0t)| &= |b(q_0)| \prod_{p\nmid q_0, p\nmid m}
\left(1 + {1 \over (p-1)^2}\right) \prod_{p\nmid q_0, p\mid m}
\left( 1 + {1 \over p-1}\right)\\
&\leq |{\mathfrak S}(\chi_1, \chi_2, m)| \cdot \prod_{p > 2} \left(\left(1 +
{1\over (p - 1)^2} \right) \Big/ \left(1 - {1\over (p - 1)^2}
\right) \right) \\
&= B|{\mathfrak S}(\chi_1, \chi_2, m)|.
\endaligned
\label{eq:7.15}
\end{equation}
The first equality in (\ref{eq:7.15}) shows $b(q_0)
\neq 0$, when $A(\chi_1, \chi_2, m) \neq 0$, and so by (\ref{eq:7.4})
we have also (\ref{eq:7.5}). Thus it remains to prove
\hbox{$|{\mathfrak S}(\chi_1, \chi_2, m)| \leq {\mathfrak S}(m)$}, and (\ref{eq:7.8}).

Let us investigate the ratio $\xi$ of the two sides $|{\mathfrak S}(\chi_1, \chi_2, m)|$ and ${\mathfrak S}(m)$ separately for each prime. If $p\nmid
[r_1, r_2]$ we have clearly the same factor on both sides. So we
have to study the following cases:

(i) If $p\mid \ell_i$, then by $(m, \ell_i) = 1$ (see (\ref{eq:7.11})) we
have $p \nmid m$, thus $p > 2$.

Now clearly
\begin{equation}
\xi(p) = {p\over (p - 1)^2} : {p(p - 2) \over (p - 1)^2} = {1
\over p - 2} \leq 1.
\label{eq:7.15a}
\end{equation}
Equality holds if and only if $\ell_i = 3$; otherwise $\xi \leq 1/3$.

(ii) Suppose $p\mid d$, then by definition $p\nmid m$, so $p >
2$. Let $p^\alpha \| d$ $(\alpha \geq 1)$, $p^\beta \| r'$. Then
$r' \mid df/e$ implies $0 \leq \beta \leq \alpha$.
Thus writing further on $\xi$ for $\xi(p)$,
\begin{equation}
\xi = {p\over p - 1} \cdot {p^{\beta/2} \over p^{\alpha - 1}(p
- 1)} : {p(p - 2)\over (p - 1)^2} = {p^{1+\beta/2-\alpha} \over
p - 2} \leq {p^{1 - \alpha/2}\over p - 2}.
\label{eq:7.16}
\end{equation}

Now, if $p\geq 5$ we have $\xi \leq \sqrt{5}/3$ for every
$\alpha \geq 1$. Let $p = 3$. Then for $\alpha \geq 3$ we have
$\xi \leq 1/\sqrt{3}$. For $\alpha = 2$, $\beta \leq 1$ we have
$\xi \leq 1/\sqrt 3$. In case of $\alpha = \beta = 2$ we have
$\xi = 1$.

For $\alpha = 1$ $(p = 3)$ we have $3^1\| r_1$, $3^1\|r_2$, so
the $\mod 3$ component of both $\chi_1$ and $\chi_2$ are $\chi_1 \big|_3
= \chi_2 \big|_3 = \chi'$, the only real non-principal character $\mod
3$. Thus $\chi^*\big|_3 = \chi_1 \chi_2 \big|_3 = \chi_0$, and consequently $3
\nmid r^*$, $\beta = 0$. In this case we have again equality in (\ref{eq:7.16}).
Summarizing, we have equality in (\ref{eq:7.16})
if and only if $d = 3$, $3^1\| r_1$, $3^1\|r_2$ or $d = 9$ and $3^2\|r' \Leftrightarrow
3^2\|r^*$.

Otherwise $\xi \leq \sqrt{5}/3$.

(iii) Finally if $p \,|\, f$, then by definition $p\,|\, e$, $p\,|\,
m$. Let $p^\alpha \| f/e$, $p^\beta \| r'$ $(0 \leq \beta \leq
\alpha)$. Then
\begin{equation}
\xi = {p \over p - 1} \cdot {p^{\beta/2} \over
\varphi(p^\alpha)} : {p \over p - 1} = {p^{\beta/2} \over
\varphi(p^\alpha)} \leq {p^{\alpha/2} \over \varphi(p^\alpha)}.
\label{eq:7.16a}
\end{equation}

If $\alpha = 0$ then clearly $\beta = 0$ and $\xi = 1$ (for
every $p$). Let us suppose $\alpha \geq 1$. If $p\geq 3$ then
$\xi \leq \sqrt 3/2$. Let $p = 2$.
Then for $\alpha \geq 3$ we have $\xi \leq 1/\sqrt 2$. For
$\alpha = 2$, $\beta \leq 1$ we have $\xi \leq 1/\sqrt 2$. In
case of $\alpha = \beta = 2$ we have $\xi = 1$.
If $\alpha = 1$ there is no non-principal character $\mod  2$,
so $\beta = 0$ and $\xi = 1$.
Summarizing, $\xi = 1$ holds if and only if $\alpha = \beta = 0$, $p$
arbitrary, that is $p \nmid f/e$ or
\begin{equation}
p = 2, \ \alpha = 1, \ \beta = 0 \ \text{ or } \ p = 2,\ \alpha
= \beta = 2,
\label{eq:7.17}
\end{equation}
that is
\begin{equation}
2\| f/e, \ 2 \nmid r' \Leftrightarrow 2 \nmid r^* \ \text{ or } \
2^2\| f/e, \ 2^2\|r' \Leftrightarrow 2^2\| r^*.
\label{eq:7.18}
\end{equation}
Otherwise $\xi \leq \sqrt 3/2$.

The considerations (i), (ii), (iii) really show that we have
always
$$
\bigl|{\mathfrak S}(\chi_1, \chi_2, m)\bigr| \leq {\mathfrak S}(m).
$$
Further,
$$
\bigl|{\mathfrak S}(\chi_1, \chi_2, m)\bigr| \leq (\sqrt 3/2) {\mathfrak S}(m)
$$
unless (\ref{eq:7.8}) holds.
\end{proof}

\section{Reduction for zeros near to {\boldmath $\sigma = 1$}}
\label{s:8}

In this section we will show (using the notation of Section~6) that error terms arising from
$S^2_1$ and $S_1 S_0$ make a contribution of
\begin{equation}
O(\mathcal L^{8}X^{1 - b/82})
\label{eq:8.1}
\end{equation}
to $R_1(m)$. Thus, further on, it is enough to study the
integral containing $S^2_0$. First we estimate the term with $S^2_1$.
Using the notation from Sections~1, \ref{s:3} and
\ref{s:6} by Lemmas \ref{l:4.1}--\ref{l:4.2} and (\ref{eq:6.1}) we have,
with the definition of $W_1(\chi)$ in \eqref{eq:6.5}
\begin{align}
&\Biggl|\sum_{q \leq p} \underset a{\sum\nolimits^\prime}
\int\limits_{\mathfrak M(q,a)} S^2_1(\alpha) e(-m \alpha)d\alpha
\Biggr| \leq \label{eq:8.2}\\
& \leq  \sum_{q \leq P} \underset {a(q)}{\sum\nolimits^\prime}
\int\limits_{\mathfrak M(q,a)} |S^2_1(\alpha)|d\alpha =\nonumber\\
& =  \sum_{q \leq p} \underset {a(q)}{\sum\nolimits^\prime}
\int\limits^{1/qQ}_{-1/qQ} \left| {1 \over \varphi(q)}
\sum_{\chi(q)} \chi(a) \tau (\overline \chi) S_1(\chi, \eta)\right|^2 d\eta
= \nonumber\\
& =  \sum_{q \leq P} {1 \over \varphi^2(q)} \sum_{\chi(q)}
\sum_{\chi'(q)} \tau (\overline \chi) \overline{\tau(\overline \chi')}
\underset {a(q)}{\sum\nolimits^\prime} \chi(a) \overline \chi'(a)
\int\limits^{1/qQ}_{-1/qQ} S_1(\chi,\eta) \overline{S_1(\chi', \eta)}
d\eta = \nonumber\\
& =  \sum_{q \leq P} {1 \over \varphi(q)} \sum_{\chi(q)}
|\tau(\overline \chi)|^2 \int\limits^{1/q Q}_{-1/qQ}
|S_1(\chi,\eta)|^2 d\eta =\nonumber\\
& =  \sum_{q\leq P} {1 \over \varphi(q)} \sum_{\chi(q)}
|\tau(\overline \chi)|^2 \int\limits^{1/qQ}_{-1/qQ} |S_1(\chi^*,
\eta)|^2 d\eta \leq \nonumber\\
& \leq  \sum_{r \leq P} \underset{\chi(r)}{\sum\nolimits^*} r
\int\limits^{1/rQ}_{-1/rQ} |S_1(\chi, \eta)|^2 d\eta
\sum_{\substack{\ell \leq P/r\\ (\ell, r) = 1}} {1 \over \varphi(r\ell)} \leq \nonumber\\
& \leq  \sum_{r \leq P} {r \over \varphi(r)}
\underset{\chi(r)}{\sum\nolimits^*} (W_1(\chi))^2 \sum_{\ell \leq P/r}
{1\over \varphi(\ell)} \ll \mathcal L^2 \sum_{r \leq P}
\underset{\chi(r)}{\sum\nolimits^*} W^2_1(\chi).\nonumber
\end{align}

As we can see, at the cost of a logarithm we could get rid of all
cross-products $S_1(\chi, \eta) \overline{S_1(\chi', \eta)}$ with $\chi \neq \chi'$.
The loss of the logarithm would
be crucial near $\sigma = 1$ but not here. We can estimate
$W_1(\chi)$ ($\chi$ primitive $\mod r$, $1 \leq r \leq P$) by means of
Gallagher's lemma (Lemma~\ref{l:4.11}) as follows.
\begin{equation}
W^2_1(\chi) \ll \int\limits^X_{X_1-Y} \bigg| {1 \over Y} \sum_{\substack{x <
n \leq x + Y \\ X_1 \leq n \leq X}} a_n \bigg|^2 dx \leq
I_1(\chi) + I_2(\chi) + I_3(\chi),
\label{eq:8.3}
\end{equation}
where $X_2 = \max(X_1, 6Y)$,
\begin{equation}
Y = rQ/2(\leq X/2), \ \
I_1 = \int\limits^{X_2}_{X_1 - Y}, \ \
I_2 = \int\limits^{X-Y}_{\min(X_2, X-Y)}, \ \
I_3 = \int\limits^X_{X-Y}
\label{eq:8.4}
\end{equation}
(where $I_2$ is missing if $Y \geq X/7$) and with the notation
(\ref{eq:1.26})--(\ref{eq:1.27})
\begin{equation}
a_n = \begin{cases}
\chi(p) \log p - b_n &\text{if } n = p\\
-b_n & \text{if } n \neq p\end{cases},
\quad
b_n = \underset{\substack{\varrho=\varrho_\chi\\
0 \leq \delta \leq b, \ |\gamma | \leq \sqrt{X}}}{\sum\nolimits^\prime}
A (\varrho)n^{\varrho-1}. \label{eq:8.5}
\end{equation}
The dash at the summation sign means that the summation is extended for $\varrho = 1$ in case of $\chi(\mod 1)$.

The treatment of the two tails, $I_1$ and $I_3$ are simpler and
basically the same. Using the explicit form of $\psi(x, \chi)$ (see
(\ref{eq:4.16})) we obtain for any $x\in [X - Y, X]$, in view of Lemma~4.15
\begin{align}
{1 \over Y} \sum_{x\leq n \leq X} a_n &= {-1\over Y}
\sum_{\substack{\varrho = \varrho_\chi \\ b < \delta \leq 1/2,\ |\gamma|
\leq \sqrt{X}}} {X^\varrho - x^\varrho \over \varrho} + O
\left({\sqrt X \over Y} \mathcal L^2\right) \label{eq:8.6}\\
&\ll \sum_{\substack{\varrho = \varrho_\chi \\ b < \delta \leq 1/2, |\gamma| \leq \sqrt X}} \min
\left( {X^{1-\delta} \over Y(|\gamma|+1)}, X^{-\delta}\right) +
O \left( {\sqrt X \over Y} \mathcal L^2 \right).\nonumber
\end{align}

The effect of the last error terms of the form $\mathcal L^2
\sqrt{X}/Y$ is, after squaring, integrating and summing for
all characters,
\begin{equation}
\ll \mathcal L^4 \sum_{r \leq P} r  {X  rQ \over
(rQ)^2} = {\mathcal L^4 XP\over Q} = \mathcal L^4 P^2.
\label{eq:8.7}
\end{equation}

We divide the remaining zeros into $\ll \mathcal L^3$
classes according to their real, imaginary parts and the conductor
$r$ of the relevant primitive character as follows:
\begin{equation}
r\! \sim\! R, \ (2^\mu\! - \! 1) {X \over RQ} \leq |\gamma| \leq
(2^{\mu+1} - 1) {X \over RQ},\  h_\nu\! -\! {1 \over\mathcal L} \leq
\delta \leq h_\nu,\ \ h_\nu = {\nu \over \mathcal L}
\label{eq:8.8}
\end{equation}
where
\begin{equation}
2^k = R \leq P/2,\ \ \mu = 0,1,\dots [\log \sqrt{X} / \log 2], \ \
b\mathcal L \leq \nu \leq \lceil \mathcal L/2\rceil .
\label{eq:8.9}
\end{equation}
Let us denote the contribution of any given class $(R, \mu,
\nu)$ to $\sum\limits_r \underset{\chi(r)}{\sum\nolimits^*}I_3(\chi)$
(with the notation $2^{\mu} = M$) by $J_3(R,M,h)$.
Then by the Cauchy--Schwarz inequality and $X/Y = 2P/r$ we have (the
conditions $R\geq 1$, $M \geq 1$  will be omitted)
\begin{align}
&\sum_{r \leq P} \underset{\chi(r)}{\sum\nolimits^*} I_3(\chi) \ll
\mathcal L^6 \max_{\substack{R\leq P, M \leq X\\ b\leq h\leq 1/2}} J_3(R,M,h)
\label{eq:8.10}\\
&\ll \mathcal L^6 \max_{\substack{R\leq P, M \leq X\\ b\leq h\leq 1/2}}
\sum_{r\sim R} \underset{\chi(r)}{\sum\nolimits^*} N^2\left(1 - h,
{XM\over RQ}, \chi\right) \left({X^{-h}\over M}\right)^2 \cdot RQ \nonumber\\
&\ll \mathcal L^6 \max_{\substack{R\leq P, M \leq X \\ b\leq h \leq
1/2}} {XM \over RQ} \mathcal L{RQ \over M} \cdot M^{-1} N^*
\left(1 - h, {XM\over RQ}, 2R\right) \cdot X^{-2h} \nonumber \\
&= X \mathcal L^7 \max_{\substack{R\leq P, M \leq X \\ b\leq h \leq 1/2}}
M^{-1} N^* \left(1 - h, {XM \over RQ}, 2R\right) \cdot X^{-2h}.\nonumber
\end{align}

If $h \leq 3/8 - \varepsilon$ we apply the imperfect density
theorem of Heath--Brown (Lemma~\ref{l:4.18}) and obtain
\begin{align}
M^{-1} N^*\left(\! 1\! -\! h, {XM\over RQ}, 2R\! \right) X^{-2h}
&\ll
\left(R^2 {P^{6/5}\over R^{6/5}} \right)^{\left({20\over 9} +
\varepsilon \right)h} M^{\left({8\over 3} + {6\over 5}
\varepsilon\right) h - 1} X^{-2h} \label{eq:8.11}\\
\ll \big(X^{-1} P^{{20\over 9} + \varepsilon}\big)^{2h}
&\ll X^{-\left(1 - \left( {20\over 9} +
\varepsilon\right)\vartheta\right)2b} \ll X^{-b/41}.
\nonumber
\end{align}

If $3/8 - \varepsilon \leq h \leq 1/2$ we will use
Lemma~\ref{l:4.15a}. Then we have by $3h \leq 1 + h$
\begin{align}
M^{-1}N^* \left(1 - h, {XM\over RQ}, R\right) X^{-2h}
&\ll \mathcal L^9 \left(R^2 \cdot {P\over R} \right)^{3h\over 1+h}
M^{{3h\over 1+h} - 1} X^{-2h} \label{eq:8.12}\\
\ll \mathcal L^9\big(P^{3\over 1+h} X^{-1}\big)^{2h}
&\ll \mathcal L^9 \cdot X^{\left(\left({24\over 11} +
2\varepsilon\right) \vartheta - 1\right) (3/4 - 2\varepsilon)}
\ll X^{-1/45} .
\nonumber
\end{align}

Since the estimation of $I_1$ runs completely analogously,
\begin{equation}
\sum_{r\leq P} \underset{\chi(r)}{\sum\nolimits^*} (I_1(\chi ) +
I_3(\chi)) \ll \mathcal L^7 X^{1 - b/41}.
\label{eq:8.13}
\end{equation}

Suppose now that $X_2 < X - Y$, that is $Y < X/7$, otherwise we
are ready. If $x \in (X_2, X - Y)$, then $x \geq 6Y$ and
\begin{equation}
[x, x + Y] \subset [X_1, X].
\label{eq:8.14}
\end{equation}
Thus the condition $X_1 < n \leq X$ can be omitted in (\ref{eq:8.3}).
So let us suppose that $Y \leq x/6$ and consider with the
notation (\ref{eq:8.5})
\begin{equation}
I'_2(\chi, x) = Y^{-2} \int\limits^{2x}_x |\vartheta(u + Y) -
\vartheta(u)|^2 du,
\quad \vartheta(u) = \sum_{n \leq u} a_n.
\label{eq:8.15}
\end{equation}

For this integral we can apply the idea of Saffari and Vaughan
\cite{SV}, to replace $u + Y$ by $u + \theta u$. Although the proof
runs completely analogously to \cite[Lemma~6]{SV}, for the sake of
completeness we will present their arguments here, since our
function $\vartheta(u)$ is different now.

Suppose that $2Y \leq v \leq 3Y$, $x \leq u \leq 2x$. In this
case we have $Y \leq v - Y \leq 2Y$, $x \leq u + Y \leq u + v \leq 3x$. Further
\begin{equation}
|\vartheta(u + Y) - \vartheta(u)|^2 \leq 2 \big(|\vartheta(u +
v) - \vartheta(u)|^2 + |\vartheta(u + Y + v - Y) - \vartheta (u
+ Y)|^2\big).
\label{eq:8.16}
\end{equation}
Thus on the right-hand side the starting points of the intervals
are in $[x,3x]$ and the length is in $[Y, 3Y]$. So we can write
(\ref{eq:8.16}) for all possible values of $v\in (2Y, 3Y)$ for
any $u$ to obtain
\begin{align}
Y \int\limits^{2x}_x |\vartheta(u + Y)\! -\! \vartheta(u)|^2 du
&\leq
4 \int\limits^{3x}_x \int\limits^{3Y}_Y |\vartheta (u + v') -
\vartheta(u) |^2 dv'\,du =\label{eq:8.17}\\
&= 4 \int\limits^{3x}_x \int\limits^{3Y/u}_{Y/u} |\vartheta(u +
\theta u) - \vartheta(u)|^2 ud \theta du \leq \nonumber\\
&\leq 4\cdot 3x \int\limits^{3x}_x \int\limits^{3Y/x}_{Y/2x}
|\vartheta (u + \theta u) - \vartheta (u)|^2 d\theta du = \nonumber \\
&= 12 x \cdot \int\limits^{3Y/x}_{Y/2x} \bigg(\int\limits^{3x}_x
|\vartheta(u + \theta u) - \vartheta(u)|^2 du\bigg) d\theta \leq \nonumber\\
&\leq 30 Y \max_{Y/2x \leq \theta \leq 3Y/x} \int\limits^{3x}_x
|\vartheta(u + \theta u) - \vartheta(u)|^2 du. \nonumber
\end{align}
Hence,
\begin{equation}
I'_2(\chi, x) \leq 30 Y^{-2} \max_{Y/2x \leq \theta \leq 3Y/x}
\int\limits^{3x}_x |\vartheta(u+\theta u) - \vartheta(u)|^2du.
\label{eq:8.18}
\end{equation}
Similarly to (\ref{eq:8.6}) we have
\begin{equation}
\vartheta (u + \theta u) - \vartheta(u) = {-1 \over Y}
\underset{\substack{\varrho = \varrho_\chi \\ 1/2 \geq \delta \geq b, \
|\gamma| \leq \sqrt{X}}}{\sum\nolimits^\prime} {u^\varrho((1 +
\theta)^\varrho - 1) \over \varrho} + O \left({\sqrt x \mathcal
L^2 \over Y}\right).
\label{eq:8.19}
\end{equation}
The contribution coming from the term $\mathcal L^2 \sqrt x/Y$
towards the final value of $\sum\limits_{r \leq P}
\underset{\chi(r)}{\sum\nolimits^*} I_2(x)$ will be similar to (\ref{eq:8.7}):
\begin{equation}
\ll \mathcal L^4 \sum_{\substack{x = 2^\nu \\ 2^\nu \leq X}} \sum_{r\leq
P} r \cdot {x \cdot rQ \over r^2 Q^2} \ll {\mathcal L^5 X P \over Q} =
\mathcal L^5 P^2.
\label{eq:8.20}
\end{equation}
Using the trivial inequality $0 \leq \theta \leq 1$
\begin{equation}
{(1 + \theta)^\varrho - 1 \over \varrho} \ll \min \left(\theta,
{1 \over |\varrho|}\right)
\label{eq:8.21}
\end{equation}
we obtain after squaring and integration in (\ref{eq:8.19}),
abbreviating the summation conditions by $\sum''$, for the term
${I_2}''(\chi, x)$ containing the zeros, the following inequality:
\begin{align}
I''_2(\chi, x) &\ll Y^{-2} \underset{\varrho}{\sum\nolimits^{\prime\prime}}
\underset{\varrho'}{\sum\nolimits^{\prime\prime}}
{|x^{\varrho + \overline \varrho' + 1}| \over |\varrho +
\overline \varrho'+1|} \min \left(\theta, {1\over
|\varrho|}\right) \min \left( \theta, {1\over |\varrho'|}\right)
\label{eq:8.22}\\
&\ll Y^{-2}\underset{\delta' \geq \delta}
{\underset{\varrho}{\sum\nolimits^{\prime\prime}}
\underset{\varrho'}{\sum\nolimits^{\prime\prime}}} {\theta x^{3
- \delta - \delta'}\over 1 + |\gamma - \gamma'|} \min
\left(\theta, {1 \over |\varrho|} \right) \nonumber \\
&\ll Y^{-1} \underset{\varrho}{\sum\nolimits^{\prime\prime}}
\mathcal L^2 x^{2 - 2\delta} \min \left(\theta, {1 \over |\varrho|}\right).
\nonumber
\end{align}

Using the same classification of moduli and zeros as in
(\ref{eq:8.9}) (with $x$ in place of $X$) we obtain by (\ref{eq:8.11}) and \eqref{eq:8.12}
\begin{align}
\sum_{r\leq x/Q}\underset{\chi(r)}{\sum\nolimits^*} I''_2(\chi,x)
&\ll \mathcal L^5 Y^{-1}\cdot {Y\over x} \max_{\substack{R \leq
P, \ 1 \leq M \leq X\\ b \leq h \leq 1/2}} M^{-1} N^*
\! \left( \!1 \! - \!
h, {xM \over RQ}, 2R \! \right)\! x^{2 - 2h}\label{eq:8.23}\\
&\leq \mathcal L^5 \max_{\substack{R \leq
P, \ 1 \leq M \leq X\\ b \leq h \leq 1/2}} M^{-1} N^* \left(1 -
h, {XM \over RQ}, 2R \right) X^{1 - 2h}\nonumber\\
&\ll \mathcal L^5 X^{1-b/41}.
\nonumber
\end{align}
Summing over $x = 2^\nu$, $X_2/2 \leq 2^\nu \leq X$ we
finally have from (\ref{eq:8.20})--(\ref{eq:8.23})
\begin{equation}
\sum_{r\leq P} \underset{\chi (r)}{\sum\nolimits^*} I_2(\chi ) \ll
\mathcal L^6 X^{1-b/41} + \mathcal L^5 P^2 \ll \mathcal L^6 X^{1-b/41}.
\label{eq:8.24}
\end{equation}
This together with (\ref{eq:8.2})--(\ref{eq:8.4}) and
(\ref{eq:8.13}) gives the estimate
\begin{equation}
\int\limits_{\mathfrak M} |S^2_1(\alpha)|d\alpha \ll \mathcal L
\sum_{r \leq P} \underset{\chi (r)}{\sum\nolimits^*} W^2_1(\chi ) \ll
\mathcal L^{8} X^{1-b/41}.
\label{eq:8.25}
\end{equation}

Since the above arguments were valid for any $b \geq 0$, we have mutatis mutandis
\begin{equation}
\int\limits_{\mathfrak M}|S_0(\alpha)|^2 d\alpha \ll \mathcal
L^{8} X.
\label{eq:8.26}
\end{equation}
Thus, together with (\ref{eq:8.25}), we obtain by the Cauchy--Schwarz inequality
\begin{equation}
\int\limits_{\mathfrak M}|S_0(\alpha) S_1(\alpha)| d\alpha \ll \mathcal
L^{8} X^{1-b/82}.
\label{eq:8.27}
\end{equation}

Summarizing, we proved
\begin{equation}
R_1(m) = \int\limits_{\mathfrak M} S^2(\alpha) e(-m\alpha)
d\alpha = \int\limits_{\mathfrak M} S^2_0(\alpha) e(-m\alpha)
d\alpha + O(\mathcal L^{8} X^{1-b/82}).
\label{eq:8.28}
\end{equation}

\section{Reduction to generalized exceptional zeros}
\label{s:9}

We will continue with the investigation of $S^2_0 = (S_2 + S_3)^2$
and show that the contribution of $S^2_2$ and $S_2 S_3$ to $R_1(m)$ are both
\begin{equation}
O_{\eta_0}({\mathfrak S}(m) e^{-c(\eta_0)H} X).
\label{eq:9.1}
\end{equation}
If there is no Siegel zero, then (\ref{eq:8.1}) and (\ref{eq:9.1})
will imply that the study of $S(\alpha)$ on the major arcs can be
restricted to that of $S_3(\alpha)$. $S_3(\alpha)$ contains only a
bounded number of terms, since by Lemma~\ref{l:4.17}, there are
only $c(\eta_0)e^{CH}$ zeros in the definition of $S_3(\alpha)$. If
there is a Siegel zero then we need an estimate sharper than
(\ref{eq:9.1}). This will be made possible by the
Deuring--Heilbronn phenomenon (Lemma~\ref{l:4.21}). This shows
that a part of the region, associated with the definition of
$S_2(\alpha)$ will be free of zeros of any $L$-functions
with a primitive character modulo any $r \leq P$.

Now we have to be more careful than in Section~8, because it is
not allowed to loose any logarithms. First we consider $S^2_2$.
 By the Main Lemma~1 we have with the notation of Section~2 and $r(\chi) = \mathrm{cond}\, \chi$, $r(\chi') = r'$,
 $B = \prod\limits_{p > 2} (1 + 2 / p(p - 2))$, similarly to \eqref{eq:6.4}, with $W_2(\chi)$ defined by \eqref{eq:6.5}
\begin{align}
&\bigg| \sum_{q \leq p} \underset a{\sum\nolimits^\prime}
\int\limits_{\mathfrak M(q,a)} S^2_2(\alpha) e(-m\alpha)d\alpha
\bigg|
\label{eq:9.2}\\
&=  \bigg| \underset{r(\chi) \leq P}{\sum\nolimits^*}\ \underset{r(\chi')\leq
P}{\sum \nolimits^*} \sum_{\substack{q \leq P\\ [r(\chi),
r(\chi')]\mid q}} c(\chi, \chi', q, m) \int\limits^{1/qQ}_{-1/qQ}
S_2(\chi,\eta) S_2(\chi', \eta) e(-m\eta) d\eta\bigg|\nonumber\\
&\leq \underset{r(\chi) \leq P}{\sum\nolimits^*}\ \underset{r(\chi')
\leq P}{\sum\nolimits^*} \sum^\infty_{\substack{q = 1\\ [r(\chi),
r(\chi')]\mid q}} |c(\chi,\chi', q,m)|
\int\limits^{1/Q[r,r']}_{-1/Q[r,r']} |S_2(\chi,\eta)|\,
|S_2(\chi',\eta)| d\eta\nonumber\\
&\leq  \underset{r(\chi) \leq P}{\sum\nolimits^*}\,
\underset{r(\chi')\leq P}{\sum\nolimits^*}\!\! B{\mathfrak S}(\chi,\chi', m) \bigg(
\int\limits^{1/rQ}_{-1/rQ} \!\! |S_2(\chi,\eta)|^2 d\eta \! \bigg)^{\!\! 1/2}
\! \bigg( \int\limits^{1/r'Q}_{-1/r'Q} \!\! |S_2(\chi',
\eta)^2|d\eta \! \bigg)^{\!\! 1/2} \nonumber\\
&\leq  B {\mathfrak S}(m) \bigg(\sum_{r\leq P}
\underset{\chi(r)}{\sum\nolimits^*} W_2(\chi)\bigg)^2.
\nonumber
\end{align}

We will treat $W_2(\chi)$ similarly, but somewhat simpler than $W_1(\chi)$ in Section~8.
For example, the tails will be estimated the same way
as the essential part. The Dirichlet series appearing in the
definition of $S_2(\chi,\eta)$ is now (cf.\
(\ref{eq:6.1})--(\ref{eq:6.3}))
\begin{equation}
b'_n = \underset{\varrho}{\sum\nolimits^+} A(\varrho) n^{\varrho - 1},
\label{eq:9.3}
\end{equation}
where by $\sum\nolimits^+$ we denote the summation conditions
\[
\varrho = \varrho_\chi, \ H/\mathcal L <  \delta \leq b,\
|\gamma|\leq \sqrt{X},
\]
where $H$ will be a large constant to be chosen later.

In order to estimate $\sum\sum^* W_2(\chi )$ by Gallagher's lemma (Lemma \ref{l:4.11})
let us consider first
for a fixed $\chi$ an arbitrary interval of type ($x$, $x + y$), where
\begin{equation}
1 \leq x \leq X, \ \ 1 \leq y \leq Y = rQ/2,
\label{eq:9.4}
\end{equation}
and apply again Gallagher's lemma (Lemma~\ref{l:4.11}).
Then we have for any $\chi $ by Lemma~\ref{l:4.7}
\begin{align}
{1\over Y} \sum^{x+y}_{n=x} b'_n &= \underset {\varrho\phantom{+}}
{\sum\nolimits^+} \left\{ {(x + y)^\varrho - x^\varrho \over
\varrho Y} + O \left({1\over Y}\right) \right\}\label{eq:9.5}\\
&\ll \underset {\varrho\phantom{+}}{\sum\nolimits^+} x^{-\delta} \min \left(
{y\over Y}, {X \over |\varrho|Y}\right) + Y^{-1} N(1 - b, \sqrt X, \chi).
\nonumber
\end{align}
The total contribution of the last error term to $\sum\limits_{r\leq P} \underset{\chi(r)}{\sum\nolimits^*} W_2(\chi)$
after squaring, summing and integrating will be by Lemma~\ref{l:4.15a} for any $b\leq 1/4$
\begin{equation}
\ll \sqrt{X} \mathcal L^C \max_{R \leq P} (RQ)^{-1} N^*
\left({3\over 4}, \sqrt{X}, R\right) \ll {\sqrt{X} \mathcal L^C
P^{1/5} X^{3/10} \over Q} \ll \mathcal L^C X^{1/3},
\label{eq:9.6}
\end{equation}
which is negligible.

Denoting the contribution of zeros after squaring, integrating and summing
to $W_2(\chi)$ by $W'_2(\chi)$, let
us define the positive coefficients
\begin{equation}
a_\varrho = \min \left(1, {X\over QR|\varrho|}\right) = \min
\left( 1, {P\over R|\varrho|}\right) \quad \text{ for } r \in [R, RX^\varepsilon].
\label{eq:9.7}
\end{equation}
Then if $b\leq 1/4$ we have $\delta \leq 1/4$ and so
\begin{equation}
(W'_2(\chi ))^2 \ll \int\limits^X_1 \bigg(\sum_\varrho a_\varrho
x^{-\delta}\bigg)^2 dx = \sum\sum a_\varrho a_{\varrho'}
\int\limits^X_1 x^{-\delta-\delta'} dx.
\label{eq:9.8}
\end{equation}
Hence
\begin{equation}
W'_2(\chi ) \ll \bigg(\sum_\varrho \sum_{\varrho'} a_\varrho a_{\varrho'}
X^{1-\delta-\delta'}\bigg)^{1/2} = X^{1/2} \sum a_\varrho
X^{-\delta}.
\label{eq:9.9}
\end{equation}
Let us consider now the contribution of all zeros $\varrho =
\varrho_\chi$, $\mathrm{cond}\, \chi = r$ with the property
\begin{equation}
(2^\mu - 1) {P\over R_\nu} \leq |\gamma| \leq (2^{\mu+1} - 1)
{P\over R_\nu}, \quad r \in [R_\nu, R_\nu X^\varepsilon], \ R_\nu = X^{\nu\varepsilon} \leq P,
\label{eq:9.10}
\end{equation}
to $\sum\sum W'_2(\chi)$, where $\varepsilon$ is a small absolute
constant, to be chosen later, depending on $\eta$.
Let $M_\mu = (2^{\mu+1} - 1) = \left[ {2\sqrt X R_\nu\over P}\right]$.
Let us fix now the constant $b = b(\eta_0) \leq 1/6$ in such
a way that with the notation
\begin{align}
c_2(\delta) &= {3 \over 2(1 - 4\delta)} < {3\over 4}
\left({2\over 1 - 4\delta} + {1 \over (1 - 2\delta)(1 - 4\delta)}  \right)=\label{eq:9.10a}\\
& = {3(3 - 4\delta)\over 4(1 - 4\delta)(1 - 2\delta)} = c_1(\delta),
\nonumber
\end{align}
the relation
\[
c_3(\delta) = 1 - \left({4\over 9} - \eta\right) c_1(\delta) > 0
\]
should hold for $0 \leq \delta \leq b$ (that is, for $\delta = b$),
and apply Lemma~\ref{l:4.19}.
From (\ref{eq:9.7})--(\ref{eq:9.10a})  in view of
$\delta c_2(\delta) \leq bc_2(b) \leq 3/4$, we obtain by partial integration with respect to $\delta$ the inequality
\begin{eqnarray}
&&\hspace*{-7.5mm} X^{-1/2} \sum_{r\leq P}
\underset{\chi(r)}{\sum\nolimits^\prime } W'_2(\chi) \label{eq:9.11}\\
& \ll & \sum_{R_\nu \leq P} \sum_{M_\mu \leq X} M^{-1}_\mu
\int\limits^b_{H/\mathcal L} X^{-\delta} d_\delta N^*\left(1 -
\delta, {PM_\mu \over R_\nu}, R_\nu X^\varepsilon\right) d\delta
\nonumber \\
&\ll_\varepsilon & \max_{R_\nu \leq P} \sum_{M_\mu \leq X}
M^{-1}_\mu \Bigg\{ X^{-b} N^* \left(1 - b, {PM_\mu \over R_\nu},
R_\nu X^\varepsilon\right) \nonumber\\
& & + \mathcal L
\int\limits^b_{H/\mathcal L} N^* \left(1 - \delta, {PM_\mu\over
R_\nu}, R_\nu X^\varepsilon\right) X^{-\delta} d\delta \Bigg\}\nonumber \\
&\ll_\varepsilon & \max_{R_\nu < P} \sum_{M_\mu \leq X}
M^{-1+bc_2(b)(1+\varepsilon)}_\mu
\bigg\{ \left( R_\nu^{c_1(b)} {P^{c_2(b)}\over R^{c_2(b)}_\nu}
X^{-1+3\varepsilon} \right)^b \nonumber\\
& & +\mathcal L
\int\limits^b_{H/\mathcal L} \left(R_\nu^{c_1(\delta)}
{P^{c_2(\delta)} \over R_\nu^{c_2(\delta)}} \cdot
X^{-1+3\varepsilon}\right)^\delta d\delta\bigg\}\nonumber \\
&\ll_\varepsilon & \big(P^{c_1(b)} X^{-1+3\varepsilon}\big)^b +
\mathcal L \int\limits^b_{H/\mathcal L} (P^{c_1(\delta)}
X^{-1+3\varepsilon})^\delta d\delta\nonumber \\
&\ll_\varepsilon & X^{-(c_3(b) - 3\varepsilon)b} + \mathcal L
\int\limits^b_{H/\mathcal L} X^{-(c_3(b)-3\varepsilon)\delta}
d\delta \nonumber \\
&\ll_\varepsilon & {1\over c_3(b) - 3\varepsilon}
e^{-(c_3(b) - 3\varepsilon)H} \ll_{\eta_0} e^{-c_4(\eta_0)H} .
\nonumber
\end{eqnarray}
Hence, from (\ref{eq:9.2}) we get
\begin{equation}
\int\limits_{\mathfrak M} S^2_2(\alpha) e(-m\alpha) d\alpha
\ll_{\eta_0} {\mathfrak S}(m) e^{-2c_4(\eta_0)H}.
\label{eq:9.12}
\end{equation}
We can repeat the same procedure as above for
$S_3(\alpha)$ in place of $S_2(\alpha)$ to obtain the same
result with $H = 0$, that is
\begin{equation}
X^{-1/2} \sum_{r\leq P} \underset{\chi(r)}{\sum\nolimits^*} W_3(\chi) \ll_{\eta_0} {\mathfrak S}(m).
\label{eq:9.13}
\end{equation}

Analogously to (9.2) we can estimate
\begin{equation}
\aligned
\int\limits_{\mathfrak M} S_2(\alpha) S_3(\alpha) e(-m\alpha)d\alpha
&\ll {\mathfrak S}(m) \bigg(\underset{r(\chi)\leq P}{\sum\nolimits^*}
W_2(\chi)\bigg) \bigg(\underset{r(\chi )\leq P}{\sum\nolimits^*}
W_3(\chi ) \bigg)\\
&\ll_{\eta_0} {\mathfrak S}(m) e^{-c_4(\eta_0)H} X.
\endaligned
\label{eq:9.14}
\end{equation}

Summarizing, we have from (\ref{eq:9.12}) and (\ref{eq:9.14})
\begin{equation}
\int\limits_{\mathfrak M} S^2_0(\alpha) e(-m\alpha) d\alpha =
\int\limits_{\mathfrak M} S^2_3(\alpha)e(-m\alpha)d\alpha +
O_{\eta_0} ({\mathfrak S}(m)e^{-c(\eta_0)H} X).
\label{eq:9.15}
\end{equation}

\section{Effect of the generalized exceptional zeros}
\label{s:10}

Finally we examine the crucial part of the contribution of
the major arcs, namely
\begin{equation}
\int\limits_{\mathfrak M} S^2_3(\alpha) e(-m\alpha)d\alpha.
\label{eq:10.1}
\end{equation}
As mentioned already in the last section, $S_3(\alpha)$ consists
of only a bounded number of terms if $H$ is bounded:
the main term, corresponding to the
pole at $s = 1$ and possibly those arising from the generalized exceptional
zeros $\varrho$ with
\begin{equation}
\delta \leq H/\mathcal L, \quad |\gamma| \leq \sqrt{X}.
\label{eq:10.2}
\end{equation}
The number of the generalized exceptional zeros in (\ref{eq:10.2}) is by
Lemma~\ref{eq:4.18}
\begin{equation}
\leq Ce^{3H}
\label{eq:10.3}
\end{equation}
with an absolute constant $C$, where $H$ will be chosen as a large constant ($H = H(\eta_0)$) depending
on $\eta_0$.
(The value of $H$ will be determined later in the
next section.) In the following we will omit in our notation the dependence of the
constants on $\eta_0$. At any rate, if $\vartheta \leq 0.44$,
that is, $\eta_0 = 4/9 - 0.44$
for example, then all constants will be absolute constants.

Let $\varrho_0 = 1$ and $\varrho_\nu$ $(\nu = 1, \dots, M)$
denote the possible generalized exceptional zeros of $L(s, \chi_\nu)$ with
primitive characters $\chi_\nu$, possibly equal, belonging to
conductors $r_\nu$. Here $M = 0$ is naturally possible, in which
case we have only the main term corresponding to $\varrho_0 =
1$.
We list multiple zeros according to their multiplicity.
Similarly to (\ref{eq:6.4}) we obtain
\begin{align}
 &\sum_{q \leq P} \underset{a}{\sum\nolimits^\prime}
\int\limits_{\mathfrak M(q,a)} S^2_3(\alpha) e(-m\alpha) d\alpha
=\label{eq:10.4}\\
&= \sum^M_{\nu=0} \sum^M_{\mu=0} \sum_{\substack{q\leq P\\ [r_\nu,
r_\mu]\mid q}} A(\varrho_\nu) A(\varrho_\mu)
 c(\chi_\nu, \chi_\mu, q, m) \int\limits^{1/qQ}_{-1/qQ}
T_{\varrho_\nu} (\eta) T_{\varrho_\mu}(\eta)
e(-m\eta) d\eta.
\nonumber
\end{align}
Until now the value of $P$ could be arbitrary. However, if a
$P_0 = X^{\vartheta_0}$ $(\vartheta_0 = {4\over 9} - \eta_0$,
$\,\eta_0 > 0)$ is given, we will choose $P$ suitably within the
range ($\varepsilon' > 0$, sufficiently small)
\begin{equation}
P\in [P_0 X^{-\varepsilon'}, P_0]
\label{eq:10.5}
\end{equation}
as to satisfy the following conditions (with $\varepsilon_0 =
\varepsilon'/10(M+1)^2$):
\begin{equation}
\aligned
&\text{if $[r_\mu, r_\nu] \leq P$ then $[r_\mu, r_\nu] \leq
PX^{-\varepsilon_0}\ \  (\nu,\mu \in [0, M])$}\\
&\text{if $|\gamma_\nu| \leq {P\over r_\nu} X^{\ve_0}$ then $|\gamma_\nu|
\leq {P\over r_\nu} X^{-4\varepsilon_0}\ \ (\nu \in [0, M])$}
\endaligned
\label{eq:10.6}
\end{equation}

First we will show that the effect of singularity pairs $\ell, \mu$ satisfying
\begin{equation}
{P\over r_\ell} X^{\varepsilon_0} \leq |\gamma_\ell| \leq \sqrt{X}
\label{eq:10.7}
\end{equation}
will be negligible, namely
\begin{equation}
\ll {\mathfrak S}(m) X^{1-\varepsilon_0}
\label{eq:10.8}
\end{equation}
for any pair $(\ell, \mu)$ $(\mu = 0,1,\dots, M)$.
Namely, similarly to (\ref{eq:9.4}--\ref{eq:9.5}) we obtain by
Gallagher's lemma (Lemma~\ref{l:4.11})
\begin{align}
\int\limits^{1/r_\ell Q}_{-1/r_\ell Q} |T^2_{\varrho_\ell}
(\eta)|d\eta &\ll \int\limits^X_{-r_\ell Q/2}
\left|{1\over r_\ell Q} \sum^X_{\substack{n = X_1\\x < n < x + r_\ell Q/2}} n^{\varrho_\ell - 1}
\right|^2 dx \ll \label{eq:10.9}\\
&\ll X\cdot \left({X^{1-\delta_\ell}\over r_\ell
Q|\varrho_\ell|}\right)^2 \ll X^{1-2\delta_\ell - 2\varepsilon_0} \ll
X^{1-2\varepsilon_0} .
\nonumber
\end{align}
Since we have trivially by Parseval's identity for any $\mu$
\begin{equation}
\int\limits^{1/rQ}_{-1/rQ} |T^2_{\varrho_\mu}(\eta)|d\eta \leq
\int\limits^1_0 |T^2_{\varrho_\mu}(\eta)|d\eta = \sum^X_{n =
X_1} n^{-2\delta_\mu} \leq X,
\label{eq:10.10}
\end{equation}
the Cauchy--Schwarz inequality yields for all $q$ with any $\varrho_\ell$ in \eqref{eq:10.7} and
$[r_\ell, r_\mu]\mid q$
\begin{equation}
\int\limits^{1/qQ}_{-1/qQ} |T_{\varrho_\mu}(\eta)
T_{\varrho_\ell}(\eta)| d\eta \ll X^{1 - \varepsilon_0} .
\label{eq:10.11}
\end{equation}
This, together with (\ref{eq:10.3}), shows (\ref{eq:10.8}).

So we can reduce our attention to zeros $\varrho_\nu$ satisfying
\begin{equation}
|\gamma_\nu| \leq {P\over r_\nu} X^{-4\varepsilon_0}
\label{eq:10.12}
\end{equation}
and we can delete with an error of $O({\mathfrak S}(m)X^{1-\varepsilon_0})$ all others.
Let us denote the remaining zeros (satisfying (\ref{eq:10.12}))
by $\varrho_\nu$, $\nu = 1,\dots, K$.
Now (\ref{eq:10.12}) implies immediately
\begin{equation}
{|\gamma_\nu|\over X_1} = {|\gamma_\nu|X^{\varepsilon_0}\over
X} \leq {X^{-3\varepsilon_0}\over r_\nu Q} \leq {1 \over qQ} \
\text{ if } q \leq X^{3\varepsilon_0} r_\nu.
\label{eq:10.13}
\end{equation}

We will show the following

\begin{Pro}
Let $\varrho_\nu$ satisfy {\rm (\ref{eq:10.13})}. Then
\begin{equation}
T_{\varrho_\nu}(\eta) \ll (X^{\delta_\nu}_1 \eta)^{-1} \ll
|\eta|^{-1} \ \text{ if } {|\gamma_\nu|\over X_1} \leq |\eta| \leq
1/2.
\label{eq:10.14}
\end{equation}
\end{Pro}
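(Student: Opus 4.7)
The idea is to split the summand into its modulus $n^{-\delta_\nu}$ and its oscillating part $e^{2\pi i n \eta} \cdot n^{i\gamma_\nu} = e(f(n))$ with
\[
f(x) = x\eta + \frac{\gamma_\nu}{2\pi} \log x,
\]
then apply partial summation so that the analytic work is concentrated in bounding the purely oscillatory sum
\[
S(u) = \sum_{X_1 < n \leq u} e(f(n)), \qquad X_1 < u \leq X.
\]

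First I would verify that $f'(x) = \eta + \gamma_\nu/(2\pi x)$ is monotonic (since $f''(x) = -\gamma_\nu/(2\pi x^2)$ has constant sign) and that the hypotheses of Lemma~\ref{l:4.6} are met on $(X_1, X]$. The assumption $|\gamma_\nu|/X_1 \leq |\eta| \leq 1/2$ gives $|f'(x)| \leq |\eta| + |\gamma_\nu|/(2\pi X_1) \leq |\eta|(1 + 1/(2\pi)) < 1$, so Lemma~\ref{l:4.6} converts the sum into an integral at the cost of $O(1)$:
\[
S(u) = \int_{X_1}^{u} e(f(x))\,dx + O(1).
\]
Next I would apply Lemma~\ref{l:4.5} to $F(x) = 2\pi f(x)$. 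Monotonicity of $F'$ is already established, and the lower bound
\[
|F'(x)| = \Bigl|2\pi\eta + \frac{\gamma_\nu}{x}\Bigr| \geq 2\pi|\eta| - \frac{|\gamma_\nu|}{X_1} \geq (2\pi - 1)|\eta|
\]
holds on $(X_1, X]$ precisely because of the hypothesis $|\gamma_\nu|/X_1 \leq |\eta|$. Hence $\bigl|\int_{X_1}^u e(f(x))\,dx\bigr| \ll 1/|\eta|$ and consequently $S(u) \ll 1/|\eta|$ uniformly in $u$.

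Now I would finish by partial summation with weights $n^{-\delta_\nu}$:
\[
T_{\varrho_\nu}(\eta) = X^{-\delta_\nu} S(X) + \delta_\nu \int_{X_1}^{X} S(u)\, u^{-\delta_\nu - 1}\,du
\ll \frac{X^{-\delta_\nu}}{|\eta|} + \frac{\delta_\nu}{|\eta|} \int_{X_1}^{X} u^{-\delta_\nu - 1}\,du
\ll \frac{X_1^{-\delta_\nu}}{|\eta|},
\]
which is the desired bound $(X_1^{\delta_\nu}\eta)^{-1}$; the weaker bound $|\eta|^{-1}$ is immediate since $X_1^{\delta_\nu} \geq 1$.

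The only delicate point is making sure the first-derivative test actually applies, i.e.\ that the oscillation of $e(n\eta)$ is not cancelled by the rotation coming from $n^{i\gamma_\nu}$. This is exactly why the hypothesis $|\gamma_\nu| \leq |\eta| X_1$ is required: it guarantees that the ``frequency'' $\gamma_\nu/(2\pi x)$ is dominated by $\eta$ throughout $(X_1, X]$, so that $F'$ stays bounded away from $0$ with the sign of $\eta$. Once this geometric point is settled, the rest is a routine Abel summation combined with Lemmas~\ref{l:4.5} and \ref{l:4.6}.
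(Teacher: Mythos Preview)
Your proof is correct and follows essentially the same route as the paper: isolate the oscillatory partial sum $\sum_{X_1<n\le u} n^{i\gamma_\nu}e(n\eta)$, bound it by $O(|\eta|^{-1})$ via Lemmas~\ref{l:4.5} and~\ref{l:4.6} using that $|\gamma_\nu|/X_1\le|\eta|$ forces $f'$ to be monotonic and bounded away from~$0$, and then recover the $X_1^{-\delta_\nu}$ factor by Abel summation against $n^{-\delta_\nu}$. Your write-up is in fact more explicit than the paper's on the derivative bounds and the partial summation step.
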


\begin{proof}
Let us consider the trigonometric sum
\begin{equation}
U(\gamma_\nu, \eta, y) = \sum_{X_1 < n \leq y} n^{i\gamma_\nu}
e(n\eta) = \sum_{X_1 < n \leq y} e(f(n)), \quad
X_1 < y \leq X,
\label{eq:10.15}
\end{equation}
where
\begin{equation}
f(u) = {\gamma_\nu\over 2\pi} \log u + \eta u.
\label{eq:10.16}
\end{equation}
For $u\in [X_1, X]$ we clearly have $f'(u) = \eta - \gamma_\nu /
2\pi u$ monotonic, the same sign as $\eta$ and by $|\gamma_\nu|/u \leq |\gamma_\nu| / X_1
\leq |\eta|$ we have also
\begin{equation}
|\eta|/2 < |f'(u)| < 3|\eta|/2.
\label{eq:10.17}
\end{equation}
Thus Lemmas \ref{l:4.5} and \ref{l:4.6} give
\begin{equation}
U(\gamma_\nu, \eta, y) \ll |\eta|^{-1}.
\label{eq:10.18}
\end{equation}
Now (\ref{eq:10.14}) follows by partial summation.
\end{proof}

The above proposition implies $(\kappa = \pm 1)$ for any pair of
remaining singularities $\varrho_\nu, \varrho_\mu$ of $L'/L(s,\chi)$ $\,(\nu, \mu
\in [0, K]$, $q_0 = [r_\nu, r_\mu] \leq P)$ by (\ref{eq:10.6}), \eqref{eq:10.12}--\eqref{eq:10.13},
and the Main Lemma (cf.\ (\ref{eq:7.7}) and (\ref{eq:7.14}, $t = hk$)
\begin{equation}
\aligned
&\sum_{\substack{q\leq X^{3\varepsilon_0} \min(r_\nu, r_\mu),\ q\leq P\\
[r_\nu, r_\mu]\mid q}} |c(\chi _\nu, \chi_\mu, q, m)|
\int\limits^{\kappa/2}_{\kappa/qQ} |T_{\varrho_\nu}(\eta)| \,
|T_{\varrho_\mu}(\eta)|d\eta \ll\\
&\ll Q \sum_{\substack{h \leq P/q_0\\ h \mid m}} \sum_{k \leq
P/hq_0} {\mathfrak S}(m) \cdot {q_0 hk \over \varphi(h) \varphi^2(k)}
\ll Q[r_\nu, r_\mu] X^{\varepsilon_0/2} \ll X^{1 -
\varepsilon_0/2} .
\endaligned
\label{eq:10.19}
\end{equation}

Using the trivial estimate
\begin{equation}
\int\limits^1_0 |T_\varrho(\eta)T_{\varrho'}(\eta)|d\eta \leq
\bigg( \int\limits^1_0 |T^2_\varrho(\eta)|d\eta\bigg)^{1/2}
\bigg(\int\limits^1_0 |T^2_{\varrho'}(\eta)|d\eta\bigg)^{1/2} \leq X,
\label{eq:10.20}
\end{equation}
we obtain for the contribution of the terms with
\[
q > [r_\nu, r_\mu] X^{\varepsilon_0} = q_0 X^{\varepsilon_0},
\]
by \eqref{eq:7.14}, similarly to (\ref{eq:10.19}), the following bound:
\begin{equation}
\aligned
&X \sum_{t > X^{\varepsilon_0}} \bigl|c(\chi_\nu, \chi_\mu, q_0 t,m)\bigr| \ll X
{\mathfrak S}(m) \sum_{h\mid m} {1 \over \varphi(h)} \sum_{k\geq
X^{\varepsilon_0}/h} {1\over \varphi^2(k)}\\
&\ll X {\mathfrak S}(m) \sum_{h\mid m} {h\over \varphi(h)}
X^{-\varepsilon_0} \ll {\mathfrak S}(m) X^{1-\ve_0/2},
\endaligned
\label{eq:10.21}
\end{equation}
which is negligible.

However, if
\[
X^{3\ve_0} \min (r_\nu, r_\mu) \leq [r_\nu, r_\mu] X^{\ve_0}
\]
then in (\ref{eq:1.22}) we have
\[
\sqrt{U} \geq \max \left({r_\nu \over (r_\nu, r_\mu)}, {r_\mu
\over (r_\nu, r_\mu)} \right) \geq X^{2\ve_0}
\]
and consequently by the Corollary to the Main Lemma (cf.\ \eqref{eq:1.21}) we have
\[
\bigl|{\mathfrak S}(\chi_\nu, \chi_\mu, m)\bigr| \leq {\mathfrak S}(m) X^{-\ve_0}.
\]
This implies for the possible contribution of the intermediate
terms with
\[
X^{3\ve_0} \min(r_\nu, r_\mu) \leq q \leq [r_\nu, r_\mu] X^{\ve_0}
\]
similarly to (\ref{eq:10.19}) the estimate (cf.\ \eqref{eq:7.2} and \eqref{eq:7.7} in the Main Lemma)
\[
O\bigl(X {\mathfrak S}(m) X^{-\ve_0}\bigr) \ll {\mathfrak S}(m) X^{1 - \ve_0}.
\]
Summarizing, we have for all pairs $\nu, \mu \in [0, K]$:
\begin{equation}
\sum_{\substack{q \leq P\\ [r_\nu, r_\mu]\mid q}} |c(\chi_\nu,
\chi_\mu, q, m)| \int\limits^{\kappa/2}_{\kappa/qQ}
|T_{\varrho_\nu}(\eta) T_{\varrho_\mu}(\eta)|d\eta \ll {\mathfrak S}(m)
X^{1-\ve_0/2}.
\label{eq:10.22}
\end{equation}

Now (\ref{eq:10.22}) means that we can extend the integration on
the right-hand side of (\ref{eq:10.4}), for the remaining singularities $\varrho_\nu$,
$\varrho_\mu$ ($\nu, \mu = 0,1,\dots, K$) for the full interval
$[0,1]$ in place of $[-1/qQ, 1/qQ]$, with an error of size $O(\mathfrak S(m) X^{1-\ve_0/2})$. Here the full integral can be expressed by
the $\Gamma$-function (cf.\ Lemmas~\ref{l:4.8}--\ref{l:4.9}) as follows:
\begin{equation}
\int\limits^1_0 T_{\varrho_\nu}(\eta) T_{\varrho_\mu}(\eta)
e(-m\eta)d\eta
=  {\Gamma(\varrho_\nu) \Gamma(\varrho_\mu) \over
\Gamma(\varrho_\nu + \varrho_\mu)} m^{\varrho_\nu + \varrho_\mu
- 1} + O(X_1).
\label{eq:10.23}
\end{equation}

Further, as $[r_\nu, r_\mu) < P$ implies $[r_\nu, r_\mu] X^{\ve_0} < P$, the effect of all terms with $q \geq P$
is by \eqref{eq:10.21} negligible.
So, from (\ref{eq:10.1}), (\ref{eq:10.4}), (\ref{eq:10.8}),
(\ref{eq:10.19}), (\ref{eq:10.22}) and (\ref{eq:10.23}) we have
\begin{equation}
\aligned
&\intl_{\mathfrak M} S^2_3(\alpha) e(-m\alpha)d\alpha\\
&=
\underset{[r_\nu, r_\mu] \leq P}{\sum^K_{\nu = 0} \sum^K_{\mu = 0}}
{\mathfrak S}(\chi_\nu, \chi_\mu, m) A(\varrho_\nu) A(\varrho_\mu)
{\Gamma(\varrho_\nu) \Gamma(\varrho_\mu) \over
\Gamma(\varrho_\nu + \varrho_\mu)} m^{\varrho_\nu + \varrho_\mu
- 1} + O(X^{1 - \varepsilon_0/2}).
\endaligned
\label{eq:10.24}
\end{equation}

Since we have for the generalized exceptional singularities
\begin{equation}
{\Gamma(\varrho_\nu) \Gamma(\varrho_\mu) \over
\Gamma(\varrho_\nu + \varrho_\mu)} \ll \big(\max(|\gamma_\nu|,
|\gamma_\mu|)\big)^{-1/2}
\label{eq:10.25}
\end{equation}
we can further learn from our formula (\ref{eq:10.24}) that up
to an error of\break
$O({\mathfrak S}(m)X/ \sqrt{T_0})$, zeros of height
\begin{equation}
|\gamma| \geq T_0
\label{eq:10.26}
\end{equation}
may be neglected as well. If we have no Siegel zero, then the
error ${\mathfrak S}(m)X/\sqrt{T_0}$ will be admissible if we choose
$T_0$ as a large constant.
This can be seen from \eqref{eq:10.24} since we have our main term
corresponding to $(\varrho_0, \varrho_0) = (1,1)$ in the sum -- which yields ${\mathfrak S}(m) m$.
Therefore, in addition to (\ref{eq:10.24}) the following formula summarizes the results of this section:
\begin{align}
&\intl_{\mathfrak M}\! S^2_3(\alpha) e(-m\alpha)d\alpha \label{eq:10.27}\\
& =  \underset {[r_\nu, r_\mu] \leq P}
{\sum^K_{\substack{\nu = 0\\ |\gamma_\nu| \leq T_0}}
\sum^K_{\substack{\mu = 0\\ |\gamma_\mu| \leq T_0}}} \!
{\mathfrak S}(\chi_\nu, \chi_\mu,
m) A(\varrho_\nu) A(\varrho_\mu) {\Gamma(\varrho_\nu)
\Gamma(\varrho_\mu) \over \Gamma(\varrho_\nu + \varrho_\mu)}
m^{\varrho_\nu + \varrho_\mu - 1} \nonumber\\
&\quad + O({\mathfrak S}(m)X / \sqrt{T_0}) +
O(X^{1 - \ve_0 / 2}) .
\nonumber
\end{align}

The relations \eqref{eq:10.24}, \eqref{eq:10.27} actually prove the explicit formula \eqref{eq:2.10} if we take into account \eqref{eq:1.21}--\eqref{eq:1.22} which follows from the Main Lemma~1.
Therefore our Theorem~\ref{t:1} is proved.

\section{Proof of Theorem~2}
\label{s:11}

Suppose that after choosing suitably $P$ in
(\ref{eq:10.5})--(\ref{eq:10.6}) we have the following case:

There is a unique real primitive character $\chi_1 (\mod r_1)$, $r_1 \leq P$ such that $L(s, \chi_1)$ has a real
zero $\varrho_1 = \beta_1 = 1 - \delta_1$ with
\begin{equation}
\delta_1 \leq h /\log X = h/\mathcal L,
\label{eq:11.1}
\end{equation}
where $h$ is a constant, to be chosen at the end of the section
which may depend on~$\eta_0$.

We remark here that $h$ will be a small constant and the
constant $H$ (cf.\ (\ref{eq:2.8}), (\ref{eq:10.2})) will be a
large constant depending on $h$.
We also remark that by the procedure in
(\ref{eq:10.5})--(\ref{eq:10.6}) we will have actually $r_1 =
[1, r_1] \leq PX^{-\varepsilon_0}$.

In this case we will show, using the Deuring--Heilbronn phenomenon
(Lem\-ma~\ref{l:4.21}) that $S_3(\alpha)$ consists exactly of two
terms: those corresponding to $\varrho_0 = 1$ and the Siegel
zero $\varrho_1$ above. Further also some part of the region
\[
\mathcal R = \big\{s; \sigma \geq 1 - b, |t| \leq \sqrt{X}\big\}
\]
associated with the definition of terms in $S_2(\alpha)$ will be
free of zeros of $L(s, \chi, r)$ if $r \leq P$. The size
of the actual zero-free part of $\mathcal R$ will depend on
$\delta_1$, that is, how close the real zero $\varrho_1 =
\beta_1$ lies to~$1$.

We remark first that in case of an arbitrary primitive character
$\chi_2$ $\mod r_2 \leq P$ and $\varrho_2 = 1 - \delta_2 + i\gamma_2$
with
\begin{equation}
\delta_2 \leq {H\over \mathcal L}, \quad
|\gamma_2| \leq \sqrt{X},
\label{eq:11.2}
\end{equation}
we have in Lemma~\ref{l:4.21} the ``trivial'' estimate
\begin{equation}
Y = (r^2_1 r_2 k(|\gamma| + 2)^2)^{3\over 8} \ll (P^5
X)^{3\over 8} \leq X^{29\over 24},
\label{eq:11.3}
\end{equation}
where $k = \mathrm{cond}\, \chi_1 \chi_2$. This implies in case of $\delta_2 < 1/200$ by \eqref{eq:4.19}
\begin{equation}
h \geq \mathcal L \delta_1 \geq {24\over 29} \delta_1
\log Y >
{1\over 2} Y^{-{(1+\ve)\over 1-6\delta_2}\delta_2} > {1\over 2}
X^{-{5\over 4} \delta_2}.
\label{eq:11.4}
\end{equation}
From this we obtain
\begin{equation}
\delta_2 \mathcal L > {4\over 5} \log {1\over 2h} = H_0(h).
\label{eq:11.5}
\end{equation}
This means that choosing $H = H_0(h)$, the existence of a Siegel
zero will really imply that there are no other zeros in the
region~(\ref{eq:11.2}).

By $Y \geq r^{3/4}_1$, the ineffective theorem of Siegel (Lemma~\ref{l:4.14}) implies
for any $\ve_1$
\begin{equation}
\delta_1 \geq \max (P^{-\ve_1}, Y^{-\ve_1}) \
\text{ if } X \geq X_1(\ve),\ Y \geq Y_1(\ve).
\label{eq:11.6}
\end{equation}
So we have for $\delta < 1/200$ from (\ref{eq:4.20}) the inequality
\begin{equation}
{\delta_2 \over 1 - 6\delta_2} > (1 - \ve_1) {\log{2\over 3\delta_1
\log Y} \over \log Y}.
\label{eq:11.7}
\end{equation}

Since by (\ref{eq:11.6}) the right-hand side is here $< \ve_1$,
(\ref{eq:11.7}) implies that
\begin{equation}
\delta_2 > (1 - 7\ve_1) {\log{2\over 3\delta_1 \log Y}\over \log
Y} \overset{\mathrm{def}}{=} \varphi_0(Y)
\label{eq:11.8}
\end{equation}
for any $\ve_1$, $Y \geq Y(\ve_1)$. Here $\varphi_0(Y) < \ve$,
if $Y \geq Y_2(\ve, \ve_1)$.
Let us denote now (cf.\ \eqref{eq:11.3})
\begin{equation}
(\delta_1 \mathcal L)^{-1} = G_1 \geq h^{-1}, \quad
G(Y) = G = {2\over 3\delta_1 \log Y} > {G_1 \over 2}.
\label{eq:11.9}
\end{equation}
We recall (cf.\ \cite{Pin1}) that effectively $\delta_1 \gg
r^{-1/2}_1$, consequently $r_1 \gg \mathcal L^2$.
Further by the Main Lemma~1 (cf.\ \eqref{eq:7.5}--\eqref{eq:7.7} and \eqref{eq:1.21}--\eqref{eq:1.22})
\begin{eqnarray}
{\mathfrak S}(\chi_0, \chi_0, m) & = & {\mathfrak S}(m)\nonumber\\
{\mathfrak S}(\chi_1, \chi_0, m) & \ll & {r_1 \over \varphi^2(r_1)} {\mathfrak S}(m)
\ll {\mathfrak S}(m) {\log r_1 \over r_1},\\
|{\mathfrak S}(\chi, \chi', m)| & \leq & {\mathfrak S}(m) \ \text{ for any } \chi,\chi'. \nonumber
\label{eq:11.10}
\end{eqnarray}
Hence the asymptotic formula (\ref{eq:10.24}) tells us that
\begin{align}
&\intl_{\mathfrak M} S^2_3(\alpha) e(-m\alpha)d\alpha\label{eq:11.11}\\
&= {\mathfrak S}(m)m+{\mathfrak S}(\chi_1,\chi_1,m) {\Gamma(1 - \delta_1)^2 \over
\Gamma(2 - 2\delta_1)} m^{1-2\delta_1}\nonumber\\
&\quad + O\left({\mathfrak S}(m) m {\log r_1 \over r_1}\right) + O(X^{1 -
\ve_0/2}) \nonumber \\
&\geq {\mathfrak S}(m) m(1 - e^{-2\delta_1 \log m} + O(\delta_1) +
O(X^{-\ve_0/2})) \nonumber \\
& > 1.9 \cdot {{\mathfrak S}(m)m \over G_1}
\nonumber
\end{align}
if $m > X^{1-\ve_0/3}$, since the last error term is negligible
in view of Siegel's theorem (cf.\ (\ref{eq:11.6})).

Now, using the zero-free region (\ref{eq:11.8}) we try to show
that, possibly for all $[m \in X/2, X]$
\begin{equation}
\bigg|\intl_{\mathfrak M} \big(S^2_2(\alpha) + 2S_2
S_3(\alpha)\big) e (-m\alpha) d\alpha \bigg| \leq
{5{\mathfrak S}(m)m\over 6G_1}.
\label{eq:11.12}
\end{equation}

We remark that (\ref{eq:11.12}) is sufficient to show our
Theorem, in view of the notation \eqref{eq:6.2}--\eqref{eq:6.3}, the
final result of Section~8 (\ref{eq:8.28}) and $G^{-1}_1 \geq P^{-\ve}$ (cf.\
(\ref{eq:11.6})). It would actually be possible to show
(\ref{eq:11.12}) for all $m$ for some smaller value of
$\vartheta$ than $4/9$ $(\vartheta = 16/39)$. However, we can
also show for any $\vartheta < 4/9$ that (\ref{eq:11.12}) holds
for all but
\begin{equation}
O\left({X^{1+\ve} \over P}\right)
\label{eq:11.13}
\end{equation}
values $m \in [X/2, X]$ which is an admissible size exceptional
set; the same as or better than the cardinality of the exceptional set arising
from the minor arcs (cf.\ (\ref{eq:5.3})).

Let us investigate now $\int(S^2_2(\alpha) +
2S_2(\alpha)S_3(\alpha))e(-m\alpha )d\alpha$. The number of
zeros appearing in $S_0(\alpha)$ is by Lemma~\ref{l:4.17}
\begin{equation}
\ll (P^2 \sqrt X)^{(2+\ve)b} \ll X^{3b}
\label{eq:11.14}
\end{equation}
and $b$ can be chosen arbitrarily small. The number of pairs of
zeros is consequently $\ll X^{6b}$.

In the present section we will suppose $b < b_1(\eta_0)$ a fixed
constant, whose value will be determined later. First we can
observe that the total contribution of all zeros $\varrho$ of all
$L(s, \chi)$ belonging to primitive characters $\chi \mod r$ with
\begin{equation}
0 \leq \delta \leq b, \quad |\gamma| > {P\over R} X^b, \quad
r\in [R, RX^b], \quad R \leq P
\label{eq:11.15}
\end{equation}
to $\sum\sum W'_2(x)$ in (\ref{eq:9.11}) is -- according to the
argumentation in (\ref{eq:9.11}) -- for $b\leq 1/8$:
\begin{equation}
\ll X^{1/2} \sum_{\substack{\mu\\ 2^\mu \geq X^b}}
(2^\mu)^{-1+bc_2(b)(1 + \ve)} e^{-cH} \ll X^{1/2-b/2}.
\label{eq:11.16}
\end{equation}
This implies for their total contribution to $\int S^2_2 + 2S_2
S_3$, by (\ref{eq:9.11}) and (\ref{eq:9.13}), the estimate
\begin{equation}
\ll {\mathfrak S}(m) X^{1-b/2} = o \left({{\mathfrak S}(m)\cdot m \over G}\right)
\label{eq:11.17}
\end{equation}
(naturally $S_3(\alpha)$ is completely known by Section~\ref{s:10}
explicitly, since it has now only the two terms $\varrho_0 = 1$,
$\varrho_1 = 1 - \delta_1$).

So we will suppose from now on, in this section, that
\begin{equation}
0 \leq \delta \leq b, \quad |\gamma| \leq {P\over R} X^b,\quad
r\in [R, RX^b]
\label{eq:11.18}
\end{equation}
for the zeros associated with $S_2(\alpha)$ (cf.\ \eqref{eq:6.2}--\eqref{eq:6.3}).

Further we can suppose that for the given $m$ we have for all
$\chi_\nu(\mod r_\nu)$, $\chi_\mu(\mod r_\mu)$
\begin{equation}
|{\mathfrak S}(\chi_\nu, \chi_\mu, m)| \geq X^{-b} {\mathfrak S}(m)
\label{eq:11.19}
\end{equation}
since for the total contribution of pairs not satisfying
(\ref{eq:11.19}) we have directly by (\ref{eq:9.2}) and
(\ref{eq:9.11})--(\ref{eq:9.13}) the estimate
\begin{equation}
B{\mathfrak S}(m) X^{-b} \Biggl\{\! \biggl(\underset{r(\chi)\leq
P}{\sum\nolimits^*} W_2(\chi)\biggr)^{\! 2}\! +\! \sum_{r(\chi)\leq P}\
\underset{r(\chi')\leq P}{\sum\nolimits^*} W_2(\chi) W_3(\chi')\! \Biggr\}
\ll
X^{1-b} {\mathfrak S}(m).
\label{eq:11.20}
\end{equation}
However, according to the Main Lemma, (\ref{eq:11.19}) implies (see
\eqref{eq:1.21}--(\ref{eq:1.22}))
\begin{equation}
U(\chi_\nu, \chi_\mu, m) \ll X^{3b} .
\label{eq:11.21}
\end{equation}

In what follows we will delete pairs in $S^2_2$ contradicting to \eqref{eq:11.19}.
(\ref{eq:11.21}) implies also
\begin{equation}
X^{-3b/2} \ll r_\nu / r_\mu \ll X^{3b/2}.
\label{eq:11.22}
\end{equation}

Let us consider first the easier case $S_3 \cdot S_2$. In this case the
term $(\varrho_j, \chi_j, r_j)$ coming from $S_3$ is either
\begin{equation}
(1, \chi_0, 1) \ \text{ or } \ (1 - \delta_1, \chi_1, r_1) \quad (j =
0 \text{ or } 1).
\label{eq:11.23}
\end{equation}
Let us suppose first that $j = 1$.
If for the term $(\varrho, \chi,r)$ (\ref{eq:11.19}) is false we can delete it.
So we can suppose here by \eqref{eq:1.21}--\eqref{eq:1.22} that we have for all $(\varrho,
\chi,r)$ in $S_2$
\begin{equation}
r_j X^{-3b/2} \ll r \ll r_j X^{3b/2}, \quad
\mathrm{cond}\, \chi \chi_j \ll X^{3b}
\label{eq:11.24}
\end{equation}
at least for the examination of $S_2 \cdot S_{31}$, the part
coming from $\chi_1$.
Thus we have for any pair $\chi, \chi'$ of characters remaining in
$S_2$ after the deletion
\begin{equation}
\mathrm{cond}\, \chi \overline \chi' \leq \mathrm{cond} \, \chi_1 \chi \cdot
\mathrm{cond}\, \overline{\chi_1 \chi'} \leq X^{6b}.
\label{eq:11.25}
\end{equation}
Let us denote the corresponding new set by $S'_{21}$.
Now we are able to use our density Lemma~\ref{l:4.20}, more exactly \eqref{eq:4.18}.
If the constant $b$ is
chosen sufficiently small in dependence on $\eta_0$ we have for
any $R_\nu = X^{\nu b} \leq P$, $\delta \leq b$
\begin{align}
& \sum_{\substack{R_\nu < r_\nu < R_\nu X^b\\
r_\nu \leq P}}\ \underset{\chi_\nu(r_\nu) \in
S'_{21}}{\sum \nolimits^*} N \left(1 - \delta, {P\over R_\nu}
X^b, \chi_\nu\right) \label{eq:11.26} \\
&\ll_b (PX^{18 b})^{(3/4 + \sqrt[3]{b})\delta} \ll_b
P^{(3/4 + 2\sqrt[3]{b})\delta} \ll_{\eta_0, b} X^{\delta/3}.
\nonumber
\end{align}
Thus we obtain by the Deuring--Heilbronn phenomenon
(\ref{eq:11.8}), similarly to (\ref{eq:9.11})
\begin{equation}
S^*_{231} \overset{\rm def}{=} X^{-{1\over2}} \sum_{r_\nu \leq P} \sum_{\chi_\nu(r_\nu)\in S'_{21}}
W'_2 (\chi) \ll_{\eta, b} X^{-(2/3)\varphi_0(Y)}.
\label{eq:11.27}
\end{equation}

Now we will show an estimate sharper than (\ref{eq:11.3})
for~$Y$.
In view of (\ref{eq:11.22}) and (\ref{eq:11.18}) we have for any pair $(\varrho,\chi)$, $\chi\mod r$, remaining in $S'_{21}$
\begin{equation}
Y = \big(r^2_1 rk(|\gamma| + 2)^2\big)^{3/8} \ll \left( r^3 X^{6b} \left({P \over r} X^{2b}\right)^2\right)^{3\over 8} \ll
P^{9\over 8} X^{4b} \leq  \sqrt X .
\label{eq:11.28}
\end{equation}
Substituting this into (\ref{eq:11.27}) we obtain
\begin{align}
S^*_{231}
&\ll_{\eta_0, b} \exp \left( -{2\over
3} \mathcal L {(1 - 7\ve_1) \log G(\sqrt{X})\over \mathcal L/2} \right)
\ll_{\eta_0,b} G(\sqrt{X})^{-4/3 + 10\varepsilon_1}\label{eq:11.29}\\
& \ll_{\eta_0,b} G(\sqrt{X})^{-\frac54}
\nonumber
\end{align}
if $\ve_1 < 1/120$.
Now let us fix a small $b$ in dependence on $\eta_0$. If now $h$
is chosen small enough in dependence on $\eta_0$, then $G(\sqrt{X}) = 4G_1/3 \geq 4h_1^{-1} /3$
will be sufficiently large in dependence on $\eta_0$, and so we obtain from \eqref{eq:11.29} finally
\begin{equation}
\bigg|\intl_{\mathfrak M} S_2(\alpha) S_{31}(\alpha) e(-m\alpha)
d\alpha \bigg| < {{\mathfrak S}(m)X \over
12 G_1} < {{\mathfrak S}(m)m\over 6G_1}.
\label{eq:11.30}
\end{equation}
If we take $\varrho = \varrho_0 = 1$, $\chi_0$, $ r_0 = 1$, then we
have by (\ref{eq:11.24}) for the undeleted terms
\begin{equation}
r \ll X^{3b/2}.
\label{eq:11.31}
\end{equation}

Consequently, using \eqref{eq:4.18a} in place of \eqref{eq:4.18} we obtain the improved estimate
$X^{10b\delta}$ in place of \eqref{eq:11.26}.
Accordingly we obtain the estimate
$X^{-(1 - 10b)\varphi_0(Y)}$ instead of \eqref{eq:11.27}.
Further,
\begin{equation}
Y = \bigl(r^2_1 rk(|\gamma| + 2)^2\bigr)^{3/8} \ll \left(P^3 r^2 \left(\frac{P}{r} X^{2b}\right)^2\right)^{3/8}
= P^{\frac{15}{8}} X^{\frac{3b}{2}} \leq X^{\frac56}.
\label{eq:11.31a}
\end{equation}
Therefore we obtain, similarly to \eqref{eq:11.29} for the analogous quantity $S^*_{230}$
\begin{equation}
S^*_{230} \ll_{\eta_{0,b}} \exp \left( - (1 - 10b) \mathcal L \frac{(1 - 7\varepsilon_1)\log G(X^{5/6})}{5\mathcal L/6} \right) \ll_{\eta_{0, b}} G(X^{5/6})^{-7/6}.
\label{eq:11.31b}
\end{equation}
Therefore we have by \eqref{eq:11.9} also
\begin{equation}
\bigg|\intl_{\mathfrak M} S_2(\alpha) S_{30}(\alpha) e(-m\alpha)
d\alpha \bigg| < {{\mathfrak S}(m)m\over 6G_1},
\label{eq:11.32}
\end{equation}
where $S_{30}$ denotes the part of $S_3$ corresponding to
$\varrho_0 = 1$.

In order to treat $\int S^2_2(\alpha)$ let us consider any fixed
pair $(\chi_j, \varrho_j) \mod r_j \in [R, R X^b]$ and consider the
set $\mathcal S(\varrho_j, \chi_j)$ of all pairs $(\varrho_\mu,
\chi_\mu)$ $\chi_\mu \mod r_\mu$ in $S_{2j}$ for which
(\ref{eq:11.19}) and therefore (\ref{eq:11.21}),
(\ref{eq:11.22}) and (\ref{eq:11.24}) hold (with $\varrho =
\varrho_j$).
By symmetry we can suppose $\delta_\mu \geq \delta_j$.

The upper estimate for all possible $Y = Y(\varrho')$,
$(\varrho', \chi') \in \mathcal S(\varrho_j, \chi_j)$ will be now,
again by (\ref{eq:11.18}), (\ref{eq:11.22}), (\ref{eq:11.24}),
similarly to (\ref{eq:11.28}):
\begin{equation}
Y = (r^2_1 r_\mu k (|\gamma_\mu| + 1)^2)^{3/8} \ll
r^{3/4}_1 k^{3/8} R^{3/8} \left({P\over R}\right)^{3/4} X^{2b}
\ll P^{3/2} k^{3/8} R^{-{3\over 8}} X^{2b}.
\label{eq:11.33}
\end{equation}
If we would like to have an estimate, valid for all $m \in [X/2,
X]$ we can estimate $k$ by $P^2$ from above and obtain
\begin{equation}
Y \ll P^{9/4} R^{-{3\over 8}} X^{2b} =: Z.
\label{eq:11.34}
\end{equation}

Further, due to $\delta_\mu \geq \delta_j$ we obtain, as in
(\ref{eq:11.26})--(\ref{eq:11.27})
\begin{equation}
X^{-1/2} \sum_{\chi_\mu} W''_2(\chi_\mu) \ll \big(P^{{3\over 4}+2\sqrt[3]{b}}
X^{-1}\big)^{\delta_j},
\label{eq:11.35}
\end{equation}
where the summation runs over all $\chi_\mu$ for which there exists
$\varrho_\mu$ with $\varrho_\mu$, $\chi_\mu \in \mathcal S(\varrho_j,
\chi_j)$.

On the other hand, the contribution of all pairs $(\chi_j,
\varrho_j)$ with $\chi_j \mod r_j \in [R, RX^b]$, $|\gamma_j| \leq
{P\over r_j} X^b \leq {P\over R} X^b$ to $\sum\sum W(\chi_j)$ is,
multiplied by (\ref{eq:11.35}), similarly to (\ref{eq:9.11})
\begin{align}
&\ll \mathcal L \intl^b_{\varphi_0(Z)} \big(R^{c_1(b) - c_2(b)}
P^{c_2(b)} X^{-1+6b} P^{{3\over 4} + 2 \sqrt[3]{b}}
X^{-1}\big)^\delta d\delta \label{eq:11.36}\\
&\ll_{\eta,b} \big(R^{3/4} P^{9/4 + 3\sqrt[3]{b}}
X^{-2}\big)^{\varphi(Z)} .
\nonumber
\end{align}

Let $u = \log R / \mathcal L(\leq \vartheta - \eta_0)$. Then the
above estimate is by (\ref{eq:11.8}) and (\ref{eq:11.34})
\begin{equation}
\leq c(\eta_0,b) \exp \left( - \left( {2 - (9/4)\vartheta - (3/4)u
\over (9/4)\vartheta - (3/8)u} - \eta_0\right) \log G\right)
\leq c(\eta_0, b) G^{-1 - \eta_0}
\label{eq:11.37}
\end{equation}
if now, exceptionally $P \leq X^{\vartheta - \eta}$ with
$\vartheta = 16/39 < 4/9$, and $b$ is small enough in dependence on
$\eta$.

In this way we get analogously to (\ref{eq:11.30})--(\ref{eq:11.32})
\begin{equation}
\bigg| \intl_{\mathfrak M} S^2_2(\alpha) e(-m\alpha)
d\alpha\bigg| < {{\mathfrak S}(m)m\over 2G_1}.
\label{eq:11.38}
\end{equation}

In order to reach $\vartheta = {4\over 9}$, we need a further
idea. First we can remark that according to the Main Lemma~1 we have
\begin{equation}
|{\mathfrak S}(\chi_1, \chi_1, m)| \leq (\sqrt{3}/2) {\mathfrak S}(m), \quad
\text{if } r_1 \nmid 36m.
\label{eq:11.39}
\end{equation}
In this case the effect of the Siegel zero cannot destroy the main
term. Therefore in this case, according to Sections
\ref{s:8}--\ref{s:10}
\begin{align}
\intl_{\mathfrak M} S^2(\alpha)e(-m\alpha)d\alpha
&= \intl_{\mathfrak M} S^2_3(\alpha) e(-m\alpha)d\alpha +
O(e^{-cH}\cdot X) \label{eq:11.40}\\
&\geq {1\over 8} m{\mathfrak S}(m) + O(e^{-cH} X) > {1\over 9} m{\mathfrak S}(m),
\nonumber
\end{align}
and we are ready without any further analysis.
So we can suppose further on that $r_1 \mid 36m$.
In the argumentation (\ref{eq:11.33})--(\ref{eq:11.38}) we are allowed
to suppose (\ref{eq:11.21}), consequently,
\begin{equation}
g_\mu(m) = {r_\mu \over(r_\mu, m)} \ll X^{3b}.
\label{eq:11.41}
\end{equation}

Now we can distinguish two cases.

\subsection*{Case A. $[r_1, r_\mu] \leq P$.}

In this case we have $k = \mathrm{cond}\, \chi_1 \chi_\mu \leq P$, and
from (\ref{eq:11.33}) we have now
\begin{equation}
Y \ll P^{15/8} R^{-3/8} X^{4b} =: Z.
\label{eq:11.42}
\end{equation}
We have this in place of (\ref{eq:11.34}). (\ref{eq:11.35}) and
(\ref{eq:11.36}) remain true, whereas we have now instead of
(\ref{eq:11.37}) the final estimate
\begin{equation}
c(\eta,b) \exp \left\{ - \left({2 - (9/4) \vartheta -
(3/4)u\over (15/8) \vartheta - (3/8)u} - \eta \right) \log
G\right\} \leq c(\eta, b) G^{-1 - \eta}
\label{eq:11.43}
\end{equation}
if $u \leq \vartheta = 4/9 - \eta$, that is $R \leq P = X^{4/9 -
\eta}$.

\subsection*{Case B. $[r_1, r_\mu] > P$.}

Let us denote by $d_{\mu k}$ the divisors of $r_\mu$ with
$d_{\mu k} \leq X^{3b}$. If we consider any fixed pair $r_\mu,
d_{\mu k}$ then let us consider the set
\begin{equation}
M(r_\mu, d_{\mu k}) = \left\{36m;\  X/2 \leq m \leq X;\ r_1\mid 36m, \ {r_\mu
\over (r_\mu, m)} = d_{\mu k} \right\} .
\label{eq:11.44}
\end{equation}
Since $r_1 \mid 36m$, ${r_\mu\over d_{\mu k}} |m| 36m$, all
elements of $M(r_\mu, d_{\mu k})$ are multiples of
\begin{equation}
\left[ r_1, {r_\mu \over d_{\mu k}} \right] > {P\over d_{\mu k}}
\geq PX^{-3b},
\label{eq:11.45}
\end{equation}
so
\begin{equation}
|M(r_\mu, d_{\mu k})| \ll {X^{1+3b}\over P}.
\label{eq:11.46}
\end{equation}
The number of all moduli is by (\ref{eq:11.14}) $\ll X^{3b}$, so
the number of all pairs $r_\mu$, $d_{\mu k}$ is clearly $\ll
X^{6b}$.

Thus, throwing away all $m$'s with
\begin{equation}
\mathcal M = \left\{ m;\ 36m \in \bigcup_{r_\mu, d_{\mu k}}
M(r_\mu, d_{\mu k})\right\}
\label{eq:11.47}
\end{equation}
the cardinality of the arising new exceptional set will be
\begin{equation}
|\mathcal M| \leq {X^{1+9b} \over P}.
\label{eq:11.48}
\end{equation}
For all $m \in [X/2, X]\setminus \mathcal M$
we have Case A and therefore we obtain, by (\ref{eq:11.43}),
similarly to (\ref{eq:11.38}) (with $\vartheta = 4/9$)
\begin{equation}
\bigg|\intl_{\mathfrak M} S^2_2(\alpha) e(-m\alpha)d\alpha\bigg|
< {{\mathfrak S}(m)m\over 2 G_1}.
\label{eq:11.49}
\end{equation}

This, together with (\ref{eq:11.30}) and (\ref{eq:11.32}), really shows (\ref{eq:11.12}).
So by (\ref{eq:11.11}) we have
\begin{equation}
\intl_{\mathfrak M} S^2_0(\alpha) e(-m\alpha) d\alpha \geq
{1.05{\mathfrak S}(m)m\over G_1}.
\label{eq:11.50}
\end{equation}
Hence, inequalities (\ref{eq:11.50}) and (\ref{eq:8.28}) prove in
case of the existence of a Siegel zero
\begin{equation}
R_1(m) \geq {1.05 {\mathfrak S}(m) m \over G_1} + O(\mathcal L^8 X^{1 -
b/82}) > {\mathfrak S}(m) m \delta_1\mathcal L,
\label{eq:11.50a}
\end{equation}
in view of Siegel's theorem (\ref{eq:11.6})\,and\,(\ref{eq:11.9}),
for all values of $m \! \in\! [X/2, X] \backslash  \mathcal M$, where
the exceptional set $\mathcal M$ satisfies (\ref{eq:11.48}). The
constant $b$ can be chosen arbitrarily small here. Thus
(\ref{eq:11.50a}) proves our Theorem~\ref{t:2}.

\section{Conclusion}
\label{s:12}

In what follows we will investigate the sum
\begin{equation}
I'(\varrho_1, \varrho_2, m) = \sum_{X_2 < k \leq X - m}
k^{\varrho_1 - 1} (k + m)^{\overline \varrho_2 - 1}, \quad
(X_2 = X/4)
\label{eq:12.1}
\end{equation}
or, more precisely, first
\begin{equation}
J(\gamma_1, \gamma_2, m, u) = \sum_{X_2 < k \leq u} e\left(\frac{f(k)}{2\pi}\right)
\qquad (u \leq X - m)
\label{eq:12.2}
\end{equation}
where
\begin{equation}
f(y) = \gamma_1 \log y - \gamma_2 \log(y + m), \quad m \in [X/4, X/2], \ X/4 \leq y \leq X - m.
\label{eq:12.3}
\end{equation}
By symmetry we can clearly suppose $\gamma_1 \geq 0$. Let
\begin{equation}
M = \max(|\gamma_1|, |\gamma_2|) > C,
\label{eq:12.4}
\end{equation}
a suitably chosen large constant. With the aim to apply
Lemma~\ref{l:4.4} we calculate $f'(y)$:
\begin{equation}
f'(y) = {\gamma_1 \over y} - {\gamma_2\over y + m} = {\gamma_1 m
- (\gamma_2 - \gamma_1)y \over y(y + m)}.
\label{eq:12.5}
\end{equation}
We have
\begin{equation}
\alignedat2
f'(y) &\geq {\gamma_1 + |\gamma_2|\over X} \geq {M\over X} &
&\text{if } \gamma_2 \leq 0,\\
f'(y) &\geq {\gamma_1\over 4X} = {M\over 4X} & &\text{if } 0
\leq \gamma_2 \leq \gamma_1, \\
f'(y) &\geq {4\gamma_1 \over 3(y + m)} - {7\gamma_1 \over 6(y +
m)} \geq {\gamma_2 \over 7X} = {M \over 7X} \quad & &\text{if }
\gamma_1 \leq \gamma_2 \leq(7/6)\gamma_1.
\endalignedat
\label{eq:12.6}
\end{equation}
Thus, let us suppose $\gamma_2 > (7/6)\gamma_1$, $\gamma_2 = M >
C$ further on. In this case we have
\begin{equation}
f'(y)
\,\aligned >\\[-3.5mm]=\\[-3.5mm]<\endaligned \,
0 \quad \text{ if } \quad y
\, \aligned <\\[-3.5mm]=\\[-3.5mm]>\endaligned \,
{m\gamma_1 \over \gamma_2 - \gamma_1}.
\label{eq:12.7}
\end{equation}
Let $D = \sqrt M = \sqrt{\gamma_2}$. Now
\begin{equation}
f'(y) >  {DX \over y(y + m)} > {D\over X} \quad \text{if } y <
{m\gamma_1 - DX\over \gamma_2 - \gamma_1}
\label{eq:12.8}
\end{equation}
and
\begin{equation}
f'(y) < - {DX \over y(y + m)} < - {D\over X} \quad \text{if } y >
{m\gamma_1 + DX\over \gamma_2 - \gamma_1}.
\label{eq:12.9}
\end{equation}
So we can apply Lemma~\ref{l:4.4} if
\begin{equation}
y \notin \left[ {m\gamma_1 \over \gamma_2 - \gamma_1} - {DX
\over \gamma_2 - \gamma_1}, {m\gamma_1 \over \gamma_2 -
\gamma_1} + {DX \over \gamma_2 - \gamma_1}\right] = I_0.
\label{eq:12.10}
\end{equation}
Estimating the sum in (\ref{eq:12.2}) trivially if $k \in I_0$,
and otherwise by Lemma~\ref{l:4.4}, we obtain by $\gamma_2 - \gamma_1 > M/7$
\begin{equation}
J(\gamma_1, \gamma_2, m, u) \ll {X\over D} + {DX\over M} \ll
{X\over \sqrt M}.
\label{eq:12.11}
\end{equation}
Finally, by partial summation, (\ref{eq:12.11}) implies
\begin{equation}
I'(\varrho_1, \varrho_2, m) \ll {X^{1 - \delta_1 -
\delta_2}\over \sqrt{\max(|\gamma_1|, |\gamma_2|)}}.
\label{eq:12.12}
\end{equation}
The above estimate holds trivially if \eqref{eq:12.4} is false.
Thus we obtain an estimate, similar to (\ref{eq:10.25}), in case
of the Generalized Twin Prime Problem, too.
Theorem~\ref{t:1} is therefore completed by the above arguments and
by the results of Sections~\ref{s:8}--\ref{s:10}, more precisely
by (\ref{eq:8.28}), (\ref{eq:9.15}) and (\ref{eq:10.27}).

\noindent
J\'anos Pintz\\
R\'enyi Mathematical Institute\\
of the Hungarian Academy of Sciences\\
Budapest, Re\'altanoda u. 13--15\\
H-1053 Hungary\\
e-mail: pintz.janos@renyi.mta.hu

\end{document}